\def\Box{\begin{flushright}\rule{2mm}{2mm}\end{flushright}}
\def\MatrixFont{\bf}
\def\VectorFont{\bf}
\newcommand{\mB}{{\MatrixFont B}}
\newcommand{\mC}{{\MatrixFont C}}
\newcommand{\mD}{{\MatrixFont D}}
\newcommand{\mE}{{\MatrixFont E}}
\newcommand{\mF}{{\MatrixFont F}}
\newcommand{\mH}{{\MatrixFont H}}
\newcommand{\mI}{{\MatrixFont I}}
\newcommand{\mK}{{\MatrixFont K}}
\newcommand{\mM}{{\MatrixFont M}}
\newcommand{\mP}{{\MatrixFont P}}
\newcommand{\mQ}{{\MatrixFont Q}}
\newcommand{\mR}{{\MatrixFont R}}
\newcommand{\mT}{{\MatrixFont T}}
\newcommand{\mX}{{\MatrixFont X}}
\newcommand{\va}{{\VectorFont a}}
\newcommand{\vc}{{\VectorFont c}}
\newcommand{\vv}{{\VectorFont v}}
\newcommand{\vw}{{\VectorFont w}}
\newcommand{\vx}{{\VectorFont x}}
\newcommand{\vy}{{\VectorFont y}}
\newcommand{\vz}{{\VectorFont z}}
\newcommand{\Expectation}{\operatorname{\mathbb{E}}}
\newcommand{\Covariance}{\operatorname{Cov}}
\def\diag{\qopname\relax o{diag}}
\def\trace{\qopname\relax o{tr}}
\newtheorem{theorem}{Theorem}[section]
\newtheorem{lemma}[theorem]{Lemma}
\theoremstyle{definition}
\theoremstyle{remark}
\newtheorem{remark}[theorem]{Remark}
\theoremstyle{algorithm}
\newtheorem{algorithm}[theorem]{Algorithm}
\theoremstyle{corollary}
\theoremstyle{example}
\newtheorem{example}[theorem]{Example}
\title{Sensor Selection  Based on Generalized Information Gain for Target Tracking in Large Sensor Networks}
\author{Xiaojing Shen, Member, IEEE, Pramod K. Varshney, Fellow,
IEEE
%%%%\thanks{This work was supported in part by Postdoctor Science Foundation of P. R. China  (\#20070420221) and the
%%%%NNSF of China through grants 60374025, 60574032 and Project 863
%%%%through grant 2006AA12A104.}
\thanks{This work was supported  in part by  U.S. Air Force Office of
Scientific Research (AFOSR) under Grants FA9550-10-1-0263 and
FA9550-10-1-0458 and in part by the NNSF of China \# 61004138  and  IRT1273.} %60934009
%\thanks{Enbin Song is with Department of Mathematics and  Department of Computer Science, Sichuan University, Chengdu, Sichuan 610064, China
%(e-mail:e.b.song@163.com).}
\thanks{Xiaojing Shen  (corresponding author, shenxj@scu.edu.cn) and Pramod K. Varshney (varshney@syr.edu) are with the Department of Electrical
Engineering and Computer Science, Syracuse University, NY, 13244,
USA.  Xiaojing Shen (shenxj@scu.edu.cn) is on leave from Department
of Mathematics, Sichuan University, Chengdu, Sichuan 610064,
China.}}
\begin{document}
 \maketitle
\begin{abstract}
In this paper,  sensor selection problems for target tracking in
large sensor networks with linear equality or inequality constraints
are considered. First, we derive an equivalent Kalman filter for
sensor selection, i.e., generalized information filter. Then, under
a regularity condition, we prove that the multistage look-ahead
policy that minimizes either the final or the average estimation
error covariances of next multiple time steps is equivalent to a
myopic sensor selection policy that maximizes the trace of the
generalized information gain at each time step. Moreover, when the
measurement noises are uncorrelated between sensors, the optimal
solution can be obtained analytically for sensor selection when
constraints are temporally separable. When constraints are
temporally inseparable,  sensor selections can be obtained by
approximately solving a linear programming  problem so that the
sensor selection problem for a large sensor network can be dealt
with quickly. Although there is no guarantee that the gap between
the performance of the chosen subset and the performance bound is
always small, numerical examples suggest that the algorithm is
near-optimal in many cases. Finally, when the measurement noises are
correlated between sensors, the sensor selection problem with
temporally inseparable constraints can be relaxed to a Boolean
quadratic programming   problem which can be efficiently solved by a
Gaussian randomization procedure along with solving a semi-definite
programming problem. Numerical examples show that the proposed
method is much better than the method that ignores dependence of
noises.
\end{abstract}

\noindent{\bf keywords:} Sensor selection; generalized information
gain;   sensor networks, target tracking

\section{Introduction}\label{sec_1}

Over the past twenty years, advances in sensor technologies have led
to the emergence of large numbers of low-cost sensing devices with a
fair amount of computing and communication capabilities. Large
sensor networks have attracted much attention both from theoretical
and practical standpoints and have become a fast-growing research
area. To efficiently manage large sensor networks, one typically
designs a policy for determining the optimal sensor network
performance and resource utilization at each time, within logical or
budget constraints. The most comprehensive recent survey on sensor
management is provided in the book
\cite{Hero-Castanon-Cochran-Kastella08}. Discussion on more advances
in this area is available in the recent survey paper
\cite{Hero-Cochran11} and references therein. In this paper, we
concentrate on sensor selection problems in which a subset of
sensors are selected at each time instant while tracking a target
that provides optimal performance--resource usage tradeoffs.
%a class of
%representative sensor management problem--the sensor selection
%problem.

The sensor selection problem arises in various applications,
including  target tracking, e.g.,
\cite{Joshi-Boyd09,Mo-Ambrosino-Sinopoli11}, robotics
\cite{Zhang-Ferrari-Qian09}, and wireless networks
\cite{Zhao-Guibas04}. Sensor selection for the target tracking
problem will be considered here. In the literature, the sensor
selection problem has been formulated for different dynamic systems.
In \cite{Joshi-Boyd09}, the state model was assumed to be
deterministic without noise. A convex optimization procedure was
developed based on a heuristic to solve the problem of selecting $k$
sensors from a set of $m$ sensors. Although no optimality guarantees
could be provided for the solution, numerical experiments showed
that it performed well. Another important contribution comes from
the work reported in \cite{Mo-Ambrosino-Sinopoli11} where the state
model was assumed random with noise and a general objective function
of the sensor selection problem was transformed to a quadratic form
by introducing the gain matrix as an additional decision variable.
However, the resulting optimization problem cannot efficiently take
advantage of the structure of the covariance of measurement noise
such as it being a diagonal matrix in the uncorrelated case. In this
paper, the sensor selection problem formulated by the use of the
Moore-Penrose generalized inverse only relies on Boolean decision
variables without introducing additional decision variables. The
resulting optimization problem can efficiently take advantage of the
structure of the measurement noise and obtain the optimal solution
analytically. Many other excellent results on sensor selection for
state estimation in different situations can be found in, e.g.,
\cite{Gupta-Chung-Hassibi-Murray06,Jiang-Kumar-Garcia03,Hernandez-Kirubarajan-BarShalom04,Tharmarasa-Kirubarajan-Hernandez-Sinha07,
Zhao-Nehorai07,Denzler-Brown02,Kolba-Collins07,Kreucher-Hero-Kastella-Morelande07,
Kreucher-Kastella-Hero05,Masazade-Fardad-Varshney12,Williams-Fisher-Willsky07,Hoffmann-Tomlin10}
and references therein.

Sensor management problems are often considered with different
criteria and objectives. Representative approaches for sensor
management include optimization of estimation error covariance
\cite{Joshi-Boyd09,Mo-Ambrosino-Sinopoli11}, Fisher information
\cite{Hernandez-Kirubarajan-BarShalom04,Tharmarasa-Kirubarajan-Hernandez-Sinha07},
and entropy or mutual information
\cite{Zhang-Ferrari-Qian09,Zhao-Nehorai07,Denzler-Brown02,Kolba-Collins07,Kreucher-Hero-Kastella-Morelande07,Kreucher-Kastella-Hero05,Hoffmann-Tomlin10,Xiong-Svensson02}.
Various functions of the estimation error covariance and Fisher
information matrix, including their determinant and trace, have been
used as reward functions for optimal  sensor management. Several
popular measures, including R$\acute{e}$nyi entropy,
Kullback-Leibler (KL) divergence,  and Hellinger-Battacharya
distance, have been used for the calculation of information gain
between two densities. In this paper,  based on the Moore-Penrose
generalized inverse, we will derive a closed-form expression of
information gain for sensor selection called generalized information
gain whose trace function is taken as the reward function for
optimal sensor selection. When the measurement noises are assumed
independent, the notion  of information measure based on the
information gain has been discussed in the literature, see
\cite{Xiong-Svensson02}.

In this paper,  we consider the problem of state estimation for a
linear dynamic system being monitored by multiple sensors. For
sensor selection, we first derive an equivalent Kalman filter for
sensor selection, i.e., generalized information filter. Then, under
a regularity condition, we prove that the multistage look-ahead
policy that minimizes either the final or the average estimation
error covariance of next $N$ time steps is equivalent to a myopic
sensor selection policy that maximizes the trace of the generalized
information gain at each time step. Thus, trace of the generalized
information gain is defined as a measure of information that the
selected sensors provide at each time step. Moreover, when the
measurement noises are uncorrelated between sensors, the  optimal
solution can be obtained analytically when the constraints are
temporally separable. When the constraints are temporally
inseparable, the solution of the sensor selection problem can be
obtained by approximately solving a linear program (LP) so that
sensor selections for a large sensor network can be performed
quickly. Although there is no guarantee that the gap between the
performance of the chosen subset and the performance bound is always
small, numerical examples suggest that the algorithm is near-optimal
in many cases. Finally, when  the measurement noises are correlated
between sensors, the sensor selection problem when the constraints
are temporally inseparable can be relaxed to a Boolean quadratic
programming (BQP) which can be efficiently solved by a Gaussian
randomization procedure along with solving a semi-definite
programming (SDP) problem which can be solved by interior-point
methods \cite{Boyd-Vandenberghe04}. Numerical examples show that the
proposed method yields solutions that are much better than the
method that ignores dependence.

The rest of the paper is organized as follows. Preliminaries are
given in Section \ref{sec_2}, where the generalized information
filter for sensor selection and multistage sensor selection problems
that minimize either the final or the average estimation error
covariances over the next $N$ time steps are formulated. In Section
\ref{sec_3}, under a regularity condition, we prove that multistage
look-ahead policies are equivalent to the myopic sensor selection
policy that maximizes the trace of the generalized information gain
at each time step. In Section \ref{sec_4}, the case of uncorrelated
measurement noises is considered. The  optimal solution is derived
analytically for sensor selection when the constraints are
temporally separable. When the constraints are temporally
inseparable, the sensor selection scheme is obtained by
approximately solving an LP. In Section \ref{sec_4}, the case of
correlated measurement noises is considered. The sensor selection
problem is relaxed to a BQP which can be efficiently solved by a
Gaussian randomization procedure along with solving an SDP problem.
In Section \ref{sec_5}, numerical examples are given and discussed.
In Section \ref{sec_6}, concluding remarks are provided.

\section{Preliminaries}\label{sec_2}
\subsection{Problem formulation}\label{sec_2_1}
We consider a surveillance region of interest (ROI) that is being
monitored by a sensor field for potential targets crossing the ROI.
The fusion center tracks the target by optimally selecting a fixed
number of sensors from a large sensor network under some logical or
budget constraints. Specifically, we consider a $L$-sensor linear
dynamic system
\begin{eqnarray}%
\label{Eqsm_1}\vx_{k+1}&=&\mF_k\vx_k+\vw_k,\\[3mm]
\label{Eqsm_2}\vy_k^i&=&\mH_k^i\vx_k+\vv_k^i,~~ i=1,2,\ldots,L,\\[3mm]
\label{Eqsm_3}\vz_k^i&=&\gamma_k^i\mH_k^i\vx_k+\gamma_k^i\vv_k^i,~~
i=1,2,\ldots,L,
\end{eqnarray}
where $\vx_{k}\in \mathbb{R}^r$, $\mF_k\in \mathbb{R}^{r\times r}$
is an invertible matrix\footnote{The invertibility of the transition
matrix  can be guaranteed in tracking problems, see
\cite{BarShalom-Chen-Mallick04}.}; $\vy_{k}^i\in \mathbb{R}^{n_i}$,
$\mH_k^i\in \mathbb{R}^{n_i\times r}$, $\{\vw_k\}$ and $\{\vv_k^i\}$
are both temporally uncorrelated with zero means and invertible
covariances $\mQ_k$ and $\mR_k^i$ respectively. The covariance of
the noise $\vv_{k}\triangleq((\vv_{k}^1)',\ldots,(\vv_{k}^L)')'$ is
denoted by $\mR_k\triangleq\Covariance(\vv_k)$ which is assumed
invertible, $\mR_k^{ij}\triangleq\Covariance(\vv_k^i,$ $\vv_k^j)$ so
that $\mR_k^{ii}=\mR_k^{i}$. If the $i$-th sensor is selected, we
let $\gamma_k^i=1$, otherwise $\gamma_k^i=0$ (see, e.g.,
\cite{Mo-Ambrosino-Sinopoli11});
$\gamma_k\triangleq(\gamma_k^1,\ldots,\gamma_k^L)'$.
%$\gamma_k^i=1$
%means $\vz_k^i=\vy_k^i$. Notice that $\gamma_k^i=0$ is not
%equivalent to zero-vector is observed, since measurement matrix
%$\mH_k^i$ and covariances $\mR_k^i$ become  to zero-matrix
%$\gamma_k^i\mH_k^i$ and $\gamma_k^i\mR_k^i$ respectively which will
%be used for state update and yields zero gain. Intuitively,
%$\gamma_k^i=0$ means the $i$-th sensor that is not selected does not provide information
%about the state $\vx_k$ and is uncorrelated with the other sensors (i.e.,
%the $i$-th sensor is not selected).
We shall focus on Equations (\ref{Eqsm_1}) and (\ref{Eqsm_3}) for
sensor selection. The stacked measurement equation is written as
\begin{eqnarray}%
%\label{Eqsm_4}\vy_k&=&\mH_k\vx_k+\vv_k,\\[3mm]
\label{Eqsm_4}\vz_k&=&\tilde{\mH}_k\vx_k+\tilde{\vv}_k,
\end{eqnarray}
where
\begin{eqnarray}%
\label{Eqsm_5}\vz_{k}&\triangleq&((\vz_{k}^1)',\ldots,(\vz_{k}^L)')',\\[3mm]
\label{Eqsm_6}\tilde{\vv}_{k}&\triangleq&((\gamma_k^1\vv_{k}^1)',\ldots,(\gamma_k^L\vv_{k}^L)')',\\[3mm]
\label{Eqsm_7}\tilde{\mH}_k&\triangleq&((\gamma_k^1\mH_k^1)',\ldots,(\gamma_k^L\mH_k^L)')'.
\end{eqnarray}
The covariance  of the noise $\tilde{\vv}_k$ is denoted by
\begin{eqnarray}
\label{Eqsm_07}\tilde{\mR}_k\triangleq\Covariance(\tilde{\vv}_k),~~
%\end{eqnarray}
%and
%\begin{eqnarray}
\tilde{\mR}_k^{ij}\triangleq\Covariance(\gamma_k^i\vv_k^i,\gamma_k^j\vv_k^j)=\gamma_k^i\gamma_k^j\mR_k^{ij}.
\end{eqnarray}
Moreover, we denote by
%$\vz_{1:k}^i\triangleq((\vz_1^i)',\ldots,(\vz_k^i)')'$,
$\vz_{1:k}\triangleq(\vz_{1}',\ldots,\vz_{k}')'$,
$\vx_{k|k}\triangleq\Expectation[\vx_k|\vz_{1:k}]$,
$\mP_{k|k}\triangleq\Expectation[(\vx_{k|k}-\vx_k)(\vx_{k|k}-\vx_k)']$.

At time $t_k$, the fusion center has $\gamma_k^i, i=1,\ldots,L$,
$\vx_{k|k}$ and $\mP_{k|k}$ (or measurements $\vz_{1:k}$).
%$\mP_{k|k}$ and $\vx_{k|k}$.
The fusion center  is to design the sensor selection scheme for the
next $N$ time steps.  At time $t_{k+n}$, $m_{k+n}$ sensors will be
selected from $L$ sensors, for $n=1, \ldots,N$. They will send their
measurements, compressed measurements or local estimates to the
fusion center. The fusion center makes the final estimates for the
state at times $t_{k+n}$, $n=1, \ldots,N$. The problem is how to
select $m_{k+n}$ sensors from $L$ sensors (i.e. determine the
Boolean decision variables $\gamma_{k+n}^i, i=1,\ldots,L$), $n=1,
\ldots,N$ that minimize the objective function which is
\begin{itemize}
\item either the final estimation error covariance
\begin{eqnarray}
\label{Eqsm_8} f_1\triangleq\mP_{k+N|k+N},
\end{eqnarray}
\item or the average
estimation error covariance
\begin{eqnarray}
\label{Eqsm_9} f_2\triangleq \frac{1}{N}\sum_{n=1}^N\mP_{k+n|k+n}.
\end{eqnarray}
\end{itemize}
The constraint that $m_{k+n}$ sensors are selected from $L$ sensors,
$n=1, \ldots,N$ induces a constraint that is temporally separable.
Moreover, we shall also consider constraints that are temporally
inseparable, for example, energy constraints.

Note that the objective functions (\ref{Eqsm_8})--(\ref{Eqsm_9}) are
matrices. Matrix optimization considered here is in the sense that
if $x^*$ is an optimal solution, then for an arbitrary feasible
solution $x$, $P(x)\succeq P(x^*)$, i.e., $P(x)-P(x^*)$ is a
positive semi-definite matrix (see, e.g.,
\cite{Boyd-Vandenberghe04}).
\subsection{Equivalent Kalman filter for sensor selection}\label{sec_2_2}
%For arbitrary given $\gamma_{k+1}^i, i=1,\ldots,L$, measurements
%$\vz_{k+1}$ and track sufficient statistics $\vx_{k|k}$ and
%$\mP_{k|k}$,
It is well known that the Kalman filter provides the globally
optimal solution if the noises are assumed Gaussian, otherwise it
provides the best linear unbiased estimate. It is recursive no
matter whether the covariances of noises are invertible or not and
is given as follows (see, e.g., \cite{Zhu-Zhou-Shen-Song-Luo12}),
\begin{eqnarray}%
\label{Eqsm_10} \vx_{k+1|k+1}&=&\vx_{k+1|k}+\mK_{k+1}(\vz_{k+1}-\tilde{\mH}_{k+1}\vx_{k+1|k}),\\[3mm]%(\gamma_{k+1}^1,\ldots,\gamma_{k+1}^L)
\label{Eqsm_11}\mP_{k+1|k+1}&=&(\mI-\mK_{k+1}\tilde{\mH}_{k+1})\mP_{k+1|k},%
\end{eqnarray}
where $\mI$ is an identity matrix with compatible dimensions,
\begin{eqnarray}
\label{Eqsm_12}\vx_{k+1|k} &=&\mF_k\vx_{k|k},\\[3mm]
\label{Eqsm_13}\mK_{k+1}&=&\mP_{k+1|k}\tilde{\mH}_{k+1}'(\tilde{\mH}_{k+1}\mP_{k+1|k}\tilde{\mH}_{k+1}'+\tilde{\mR}_{k+1})^{+},\\[3mm]
\label{Eqsm_14}\mP_{k+1|k}&=&\mF_{k}\mP_{k|k}\mF_k'+\mQ_{k}.
\end{eqnarray}
The superscript ``$^+$" means Moore-Penrose generalized inverse
(see, e.g., \cite{BenIsrael-Greville03}) \footnote{Here, the
Moore-Penrose generalized inverse is used since the
$(\tilde{\mH}_{k+1}\mP_{k+1|k}\tilde{\mH}_{k+1}'+\tilde{\mR}_{k+1})$
may not be invertible. The reason is that $\tilde{\mH}_{k+1}$ and
$\tilde{\mR}_{k+1}$ defined by (\ref{Eqsm_7}) and (\ref{Eqsm_07})
for  the sensor selection problem include the decision variables
$\{\gamma_{k+1}^1,\ldots,\gamma_{k+1}^L\}$ which have $L-m_{k+1}$
number of zeros.}. If $\mP_{k+1|k}$, $\mP_{k+1|k+1}$ and
$\tilde{\mR}_{k+1}$ are invertible (for example, the case that all
$L$ sensors are selected), then we have the following equivalent
Kalman filter
\begin{eqnarray}
\label{Eqsm_15}\vx_{k+1|k+1} &=&\mP_{k+1|k+1}
(\mP_{k+1|k}^{-1}\vx_{k+1|k}+
\tilde{\mH}_{k+1}'\tilde{\mR}_{k+1}^{-1}\vz_{k+1}),\\[3mm]
\label{Eqsm_16}\mP_{k+1|k+1}&=&(\mP_{k+1|k}^{-1}+\tilde{\mH}_{k+1}'\tilde{\mR}_{k+1}^{-1}\tilde{\mH}_{k+1})^{-1},
\end{eqnarray}
which is usually called the information filter and
$\tilde{\mH}_{k+1}'\tilde{\mR}_{k+1}^{-1}\tilde{\mH}_{k+1}$ is
called the information gain (see e.g., \cite{BarShalom-Li95}). Once
the sensors are selected, the covariance of the noise vector of the
selected sensors is invertible. Note that we assumed that $\mQ_{k},
k=1,2,\ldots,$ are invertible, and it is easy to check that
$\mP_{k+1|k}$ and $\mP_{k+1|k+1}$ which are updated by the Kalman
Filter based on the selected sensors are also invertible. Here,
however, $\tilde{\mR}_{k+1}$ are not invertible, since there are
$L-m_{k+1}$ number of zeros in the decision variables
$\{\gamma_{k+1}^1,\ldots,\gamma_{k+1}^L\}$. Thus, we first prove
that, for the dynamic system (\ref{Eqsm_1}) and (\ref{Eqsm_4})
defined under sensor selection where $\tilde{\mR}_{k+1}$ are not
invertible, there still exists an equivalent Kalman filter similar
to (\ref{Eqsm_15})--(\ref{Eqsm_16}).

\begin{theorem}\label{thm_1}
For the dynamic system defined by (\ref{Eqsm_1}) and (\ref{Eqsm_4})
under sensor selection,
%if $\mP_{k+1|k}$ and $\mP_{k+1|k+1}$ are invertible,
we have the following equivalent Kalman filter (generalized
information filter)
\begin{eqnarray}
\label{Eqsm_17}\vx_{k+1|k+1} &=&\mP_{k+1|k+1}
(\mP_{k+1|k}^{-1}\vx_{k+1|k}+ \tilde{\mH}_{k+1}'\tilde{\mR}_{k+1}^{+}\vz_{k+1}),\\[3mm]
\label{Eqsm_18}\mP_{k+1|k+1}&=&(\mP_{k+1|k}^{-1}+\tilde{\mH}_{k+1}'\tilde{\mR}_{k+1}^{+}\tilde{\mH}_{k+1})^{-1}.
\end{eqnarray}
\end{theorem}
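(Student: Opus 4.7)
The plan is to reduce the degenerate (non-invertible $\tilde{\mR}_{k+1}$) problem to the standard invertible case by observing that the unselected sensors contribute zero rows/columns to every object in (\ref{Eqsm_5})--(\ref{Eqsm_07}), and then apply the classical information-filter/Kalman-filter equivalence on the reduced system where all covariances are positive definite.

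First, I would permute the sensor indices so that the $m_{k+1}$ selected sensors appear first. After this permutation, $\tilde{\vv}_{k+1}$, $\vz_{k+1}$ and $\tilde{\mH}_{k+1}$ defined by (\ref{Eqsm_6})--(\ref{Eqsm_7}) split as
\begin{equation*}
\vz_{k+1}=\begin{pmatrix}\bar{\vz}_{k+1}\\ \vzero\end{pmatrix},\qquad
\tilde{\mH}_{k+1}=\begin{pmatrix}\bar{\mH}_{k+1}\\ \vzero\end{pmatrix},\qquad
\tilde{\mR}_{k+1}=\begin{pmatrix}\bar{\mR}_{k+1} & \vzero\\ \vzero & \vzero\end{pmatrix},
\end{equation*}
where barred quantities refer to the sub-system formed by the selected sensors. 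The key point here is that (\ref{Eqsm_07}) forces the entire $(i,j)$-block to vanish whenever $\gamma_{k+1}^i\gamma_{k+1}^j=0$, so the zero structure is genuinely block-diagonal even when $\vv_{k+1}$ is cross-correlated. Moreover $\bar{\mR}_{k+1}$ is a principal sub-matrix of the invertible $\mR_{k+1}$, hence itself positive definite.

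Next, I would compute the Moore--Penrose pseudoinverse using the block form, obtaining
\begin{equation*}
\tilde{\mR}_{k+1}^{+}=\begin{pmatrix}\bar{\mR}_{k+1}^{-1} & \vzero\\ \vzero & \vzero\end{pmatrix},
\end{equation*}
and verify the three identities
\begin{equation*}
\tilde{\mH}_{k+1}'\tilde{\mR}_{k+1}^{+}\tilde{\mH}_{k+1}=\bar{\mH}_{k+1}'\bar{\mR}_{k+1}^{-1}\bar{\mH}_{k+1},\quad
\tilde{\mH}_{k+1}'\tilde{\mR}_{k+1}^{+}\vz_{k+1}=\bar{\mH}_{k+1}'\bar{\mR}_{k+1}^{-1}\bar{\vz}_{k+1},
\end{equation*}
together with
\begin{equation*}
(\tilde{\mH}_{k+1}\mP_{k+1|k}\tilde{\mH}_{k+1}'+\tilde{\mR}_{k+1})^{+}=\begin{pmatrix}(\bar{\mH}_{k+1}\mP_{k+1|k}\bar{\mH}_{k+1}'+\bar{\mR}_{k+1})^{-1} & \vzero\\ \vzero & \vzero\end{pmatrix}.
\end{equation*}
Substituting these into (\ref{Eqsm_10})--(\ref{Eqsm_14}) shows that the Kalman recursion for the padded system produces exactly the same $\vx_{k+1|k+1}$ and $\mP_{k+1|k+1}$ as the Kalman recursion for the reduced system $(\bar{\vz}_{k+1},\bar{\mH}_{k+1},\bar{\mR}_{k+1})$, since the zero blocks kill the contributions from unselected sensors.

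Finally, since $\mP_{k+1|k}$ is invertible (as noted in the paragraph before the theorem, $\mQ_k\succ 0$ implies $\mP_{k+1|k}\succ 0$) and $\bar{\mR}_{k+1}\succ 0$, the classical matrix inversion lemma applies to the reduced system and gives
\begin{equation*}
\mP_{k+1|k+1}=(\mP_{k+1|k}^{-1}+\bar{\mH}_{k+1}'\bar{\mR}_{k+1}^{-1}\bar{\mH}_{k+1})^{-1},
\end{equation*}
together with the usual rearrangement of (\ref{Eqsm_10}) into
\begin{equation*}
\vx_{k+1|k+1}=\mP_{k+1|k+1}(\mP_{k+1|k}^{-1}\vx_{k+1|k}+\bar{\mH}_{k+1}'\bar{\mR}_{k+1}^{-1}\bar{\vz}_{k+1}).
\end{equation*}
Plugging the three identities of the previous step back in yields (\ref{Eqsm_17})--(\ref{Eqsm_18}).

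The main obstacle is the pseudoinverse bookkeeping in the second step: one has to be careful that the Moore--Penrose pseudoinverse of the singular block matrix really equals the ``obvious'' block inverse (this uses the Penrose conditions and the fact that $\bar{\mR}_{k+1}$ is symmetric positive definite), and that each occurrence of $\tilde{\mR}_{k+1}^{+}$ in (\ref{Eqsm_13}) and (\ref{Eqsm_17})--(\ref{Eqsm_18}) really annihilates the zero rows/columns in the correct way so that the padded filter coincides with the reduced filter. Once that is established, the theorem reduces to the textbook information-filter derivation.
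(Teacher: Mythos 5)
Your proposal is correct, and its opening move is exactly the paper's: permute so the $m_{k+1}$ selected sensors come first, observe that (\ref{Eqsm_07}) kills every block touching an unselected sensor, and read off $\tilde{\mR}_{k+1}^{+}$ as the zero-padded inverse of the selected sensors' covariance (the paper carries this out with an explicit permutation matrix $\mP$ and the identity $(\mP\tilde{\mR}_{k+1}\mP')^{+}=\mP\tilde{\mR}_{k+1}^{+}\mP'$). Where you diverge is in the second half. You collapse the padded filter onto the reduced system $(\bar{\vz}_{k+1},\bar{\mH}_{k+1},\bar{\mR}_{k+1})$, invoke the classical invertible-case information filter there, and translate back through your three identities; this is modular and short, but it outsources the core computation to the textbook matrix-inversion-lemma derivation. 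The paper instead never leaves the padded coordinates: it uses the block structure to prove the pseudoinverse analogue of the inversion lemma, $(\tilde{\mR}_{k+1}+\tilde{\mH}_{k+1}\mP_{k+1|k}\tilde{\mH}_{k+1}')^{+}=\tilde{\mR}_{k+1}^{+}(\mI+\tilde{\mH}_{k+1}\mP_{k+1|k}\tilde{\mH}_{k+1}'\tilde{\mR}_{k+1}^{+})^{-1}$, deduces from it that the gain satisfies $\mK_{k+1}=\mP_{k+1|k+1}\tilde{\mH}_{k+1}'\tilde{\mR}_{k+1}^{+}$, and then reads (\ref{Eqsm_17})--(\ref{Eqsm_18}) directly off (\ref{Eqsm_10})--(\ref{Eqsm_11}); this is longer but self-contained and produces the gain identity as a reusable byproduct. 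Your only obligations beyond what you wrote are the ones you already flagged: verifying the Penrose conditions for the block pseudoinverse, and noting that the zero block of $\mK_{k+1}$ makes $\mK_{k+1}\vz_{k+1}$ and $\mK_{k+1}\tilde{\mH}_{k+1}$ agree with their reduced counterparts, both of which are routine.
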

\begin{proof}
See appendix.
\end{proof}

The key difference in Theorem \ref{thm_1} is that
$\tilde{\mR}_{k+1}^{-1}$ where $\tilde{\mR}_{k+1}$ is invertible in
(\ref{Eqsm_15})--(\ref{Eqsm_16}) has been replaced by
$\tilde{\mR}_{k+1}^{+}$ in (\ref{Eqsm_17})--(\ref{Eqsm_18}). Due to
this difference,
$\tilde{\mH}_{k+1}'\tilde{\mR}_{k+1}^{+}\tilde{\mH}_{k+1}$ will be
called \emph{generalized information gain}.  Notice that
$\mP_{k+1|k+1}$ is a function of
$\gamma_{k+1}^1,\ldots,\gamma_{k+1}^L$, since $\tilde{\mH}_{k+1}$
and $\tilde{\mR}_{k+1}$ are functions of
$\gamma_{k+1}^1,\ldots,\gamma_{k+1}^L$. Thus, it is denoted by
$\mP_{k+1|k+1}(\gamma_{k+1}^1,\ldots,\gamma_{k+1}^L)$. Similarly,
$\mP_{k+n|k+n}(\gamma_{k+1}^1,\ldots,\gamma_{k+1}^L,\ldots$,
$\gamma_{k+n}^1,\ldots,\gamma_{k+n}^L)$ is a function of
$\gamma_{k+1}^1$, $\ldots,\gamma_{k+1}^L$, $\ldots$,
$\gamma_{k+n}^1$, $\ldots$, $\gamma_{k+n}^L$, for $n=1,\ldots,N$.

It is the generalized information filter based on Moore-Penrose
generalized inverse that helps us decouple the multistage look-ahead
policies to an equivalent myopic sensor selection policy that
maximizes the generalized information gain with a lower
computational complexity in Section \ref{sec_3}. Another advantage
is that the  sensor selection problem formulated by the use of the
Moore-Penrose generalized inverse only relies on Boolean decision
variables without introducing additional decision variables and can
efficiently take advantage of the structure of the measurement noise
to obtain the optimal solution and efficient algorithms.

\subsection{Optimization problems for sensor selection} \label{sec_2_3}

Thus, by using Theorem \ref{thm_1}, the two sensor selection
problems can be stated as
\begin{eqnarray}%
\nonumber \min_{\gamma_{k+n}^i} ~&&\mP_{k+N|k+N}(\gamma_{k+1}^1,\ldots,\gamma_{k+1}^L,\ldots,\gamma_{k+N}^1,\ldots,\gamma_{k+N}^L)\\
\label{Eqsm_19}&&=(\mP_{k+N|k+N-1}^{-1}+\tilde{\mH}_{k+N}'\tilde{\mR}_{k+N}^{+}\tilde{\mH}_{k+N})^{-1}\\
\mbox{subject to}~
\label{Eqsm_20}&&\sum_{i=1}^L\gamma_{k+n}^i=m_{k+n},~ n=1,\ldots,N,\\
\label{Eqsm_21}&&\gamma_{k+n}^i\in \{0,1\}, i=1,2,\ldots,L,~
n=1,\ldots,N,
\end{eqnarray}
and
\begin{eqnarray}%
\nonumber  \min_{\gamma_{k+n}^i} ~&&\sum_{n=1}^N\mP_{k+n|k+n}(\gamma_{k+1}^1,\ldots,\gamma_{k+1}^L,\ldots,\gamma_{k+N}^1,\ldots,\gamma_{k+N}^L)\\
\label{Eqsm_22}&&=\sum_{n=1}^N(\mP_{k+n|k+n-1}^{-1}+\tilde{\mH}_{k+n}'\tilde{\mR}_{k+n}^{+}\tilde{\mH}_{k+n})^{-1}\\
\mbox{subject to}~
\nonumber &&\sum_{i=1}^L\gamma_{k+n}^i=m_{k+n},~ n=1,\ldots,N,\\
\nonumber &&\gamma_{k+n}^i\in \{0,1\}, i=1,2,\ldots,L,~ n=1,\ldots,
N.
\end{eqnarray}

\section{Generalized Information Measure for Sensor Selection}\label{sec_3}
In this section, we consider some properties of the optimization
problems presented in Section \ref{sec_2_3} that will simplify their
solution. We will show that if the primal sensor selection problem
(\ref{Eqsm_19}) has an optimal solution, then both the problem
(\ref{Eqsm_19}) and the problem (\ref{Eqsm_22}) can be transformed
to equivalent optimization problems that maximize an information
measure at each time step.

\begin{lemma}\label{lem_1}
Consider two optimization problems:
\begin{eqnarray}%
\label{Eqsm_2_1}(A_1)~~~~~\max_{\vx\in\mathcal {S}} ~&&\mM(\vx),\\
\label{Eqsm_2_2}(A_2)~~~~~\max_{\vx\in\mathcal {S}}
~&&\trace(\mM(\vx)),
\end{eqnarray}
where $\mM(\vx)$ is a matrix for an arbitrary $\vx\in\mathcal {S}$;
$\mathcal {S}$ specifies the constraint on the decision variable
$\vx$. If the problem ($A_1$) has an optimal solution, then the
problem ($A_1$) is equivalent to ($A_2$).
\end{lemma}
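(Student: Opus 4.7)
The plan is to unpack the matrix optimization in the Löwner sense (as declared in Section \ref{sec_2_1}) and show the two problems have the same optimizer set in both directions, using the monotonicity of the trace with respect to $\succeq$ together with the elementary fact that a positive semi-definite matrix with zero trace is the zero matrix.

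First I would prove the easy direction. Let $\vx^*$ be an optimal solution of $(A_1)$, meaning $\mM(\vx^*) \succeq \mM(\vx)$ for every $\vx \in \mathcal{S}$. Then $\mM(\vx^*) - \mM(\vx) \succeq 0$, and since the trace of a positive semi-definite matrix is nonnegative, $\trace(\mM(\vx^*)) \geq \trace(\mM(\vx))$ for every $\vx \in \mathcal{S}$. Thus $\vx^*$ is optimal for $(A_2)$.

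Next I would prove the reverse direction, which is where the hypothesis ``$(A_1)$ has an optimal solution'' is used. Let $\vx^\dagger$ be any optimal solution of $(A_2)$, and let $\vx^*$ be the assumed optimal solution of $(A_1)$. By optimality of $\vx^*$ in $(A_1)$, $\mM(\vx^*) \succeq \mM(\vx^\dagger)$, so $\trace(\mM(\vx^*)) \geq \trace(\mM(\vx^\dagger))$. By optimality of $\vx^\dagger$ in $(A_2)$, the reverse inequality holds, so $\trace(\mM(\vx^*) - \mM(\vx^\dagger)) = 0$. Since $\mM(\vx^*) - \mM(\vx^\dagger)$ is positive semi-definite and has zero trace (equivalently, zero sum of nonnegative eigenvalues), it must be the zero matrix, giving $\mM(\vx^\dagger) = \mM(\vx^*) \succeq \mM(\vx)$ for every $\vx \in \mathcal{S}$. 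Hence $\vx^\dagger$ is also optimal for $(A_1)$.

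Combining the two directions, the optimal solution sets of $(A_1)$ and $(A_2)$ coincide, which is the claimed equivalence. The only delicate point is the need for the existence hypothesis on $(A_1)$: without it, $(A_2)$ could have maximizers whose matrix values are not comparable to those of any candidate in $(A_1)$, and the argument collapsing the trace gap to a matrix equality would have nothing to anchor on. With that hypothesis, however, the argument reduces to two standard facts about the Löwner order and the trace, so no further machinery is required.
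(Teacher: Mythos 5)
Your proof is correct and follows essentially the same route as the paper's: the forward direction uses monotonicity of the trace under the L\"{o}wner order, and the reverse direction anchors on the assumed optimizer of $(A_1)$, collapses the trace gap to zero, and invokes the fact that a positive semi-definite matrix with zero trace vanishes. No substantive difference from the paper's argument.
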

\begin{proof}
See appendix.
\end{proof}

\begin{lemma}\label{lem_2}
Consider two optimization problems:
\begin{eqnarray}%
\label{Eqsm_2_5}&&(B_1)~~~~~\min_{\vx_i\in\mathcal {S}_i,
i=1,\ldots,n}
~\mM_n(\vx_1,\ldots,\vx_n),~ for ~n=1,\ldots, N,\\[3mm]
\label{Eqsm_2_6}&&(B_2)~~~~~\min_{\vx_n\in\mathcal {S}_n,
n=1,\ldots, N,}~ \sum_{n=1}^N\mM_n(\vx_1,\ldots,\vx_n)
\end{eqnarray}
where $\mM_n(\vx_1,\ldots,\vx_n)$ is a function of decision
variables $\vx_1,\ldots,\vx_n$, for $n=1,\ldots, N$. If the optimal
solution that minimizes $\mM_n(\vx_1,\ldots,\vx_n)$,
$(\vx_1^*,\ldots,\vx_n^*)$, is the same as the one that minimizes
$\mM_{n+1}(\vx_1,\ldots,\vx_{n+1})$, for $n=1,\ldots, N-1$, then the
optimal solution that minimizes $\mM_N(\vx_1,\ldots,\vx_N)$ (($B_1$)
with $n=N$) is the same as that for ($B_2$).
\end{lemma}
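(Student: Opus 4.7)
The plan is to show that the nested-minimizer hypothesis forces a single tuple $(\vx_1^*,\ldots,\vx_N^*)$ to be the simultaneous minimizer of every $\mM_n$, and then to exploit the closure of the positive semi-definite cone under addition to transfer this joint optimality to the summed objective in $(B_2)$.

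First, I would build the common minimizer by induction on $n$. The base case $n=1$ gives some $\vx_1^*\in\mathcal{S}_1$ that minimizes $\mM_1$. For the step $n\to n+1$, the hypothesis says that the minimizer of $\mM_{n+1}$ agrees in its first $n$ arguments with the minimizer of $\mM_n$, which by the inductive hypothesis is $(\vx_1^*,\ldots,\vx_n^*)$; appending the $(n+1)$-st coordinate of that minimizer defines $\vx_{n+1}^*$. Threading this through $n=1,\ldots,N$ produces a single tuple $(\vx_1^*,\ldots,\vx_N^*)\in\mathcal{S}_1\times\cdots\times\mathcal{S}_N$ with the property that, for every $n\in\{1,\ldots,N\}$, the truncation $(\vx_1^*,\ldots,\vx_n^*)$ is the matrix (\Lowner-order) minimum of $\mM_n$ on $\mathcal{S}_1\times\cdots\times\mathcal{S}_n$.

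Next, I would sum matrix inequalities. For any feasible $(\vx_1,\ldots,\vx_N)$ and each $n$,
\[
\mM_n(\vx_1,\ldots,\vx_n)\ \succeq\ \mM_n(\vx_1^*,\ldots,\vx_n^*),
\]
by the truncation's matrix optimality. Since the positive semi-definite cone is closed under addition, summing these $N$ inequalities gives
\[
\sum_{n=1}^N\mM_n(\vx_1,\ldots,\vx_n)\ \succeq\ \sum_{n=1}^N\mM_n(\vx_1^*,\ldots,\vx_n^*),
\]
so $(\vx_1^*,\ldots,\vx_N^*)$ is optimal for $(B_2)$. For the converse direction, any minimizer $(\vx_1',\ldots,\vx_N')$ of $(B_2)$ must attain equality in every summand, since a strict PSD gap in one term would combine with the nonnegative gaps in the others to give a strict PSD gap in the sum, contradicting optimality. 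Hence each truncation $(\vx_1',\ldots,\vx_n')$ is itself a minimizer of $\mM_n$, and the uniqueness implicit in the hypothesis forces agreement with $(\vx_1^*,\ldots,\vx_N^*)$; in particular it agrees with the minimizer of $(B_1)$ at $n=N$.

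The one subtlety to track carefully is the semantics of ``minimizing a matrix-valued objective'' in the \Lowner\ order: such a minimum need not exist in general, and the nested-minimizer hypothesis is exactly what supplies its existence along the whole chain $n=1,\ldots,N$. Once that is granted, the proof reduces to a one-line sum of $N$ matrix inequalities, and I do not foresee any real obstacle beyond bookkeeping of the induction.
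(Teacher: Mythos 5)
Your proposal is correct and follows essentially the same route as the paper's proof: both directions rest on summing the $N$ \Lowner{} inequalities $\mM_n(\vx_1,\ldots,\vx_n)\succeq\mM_n(\vx_1^*,\ldots,\vx_n^*)$ supplied by the nested-minimizer hypothesis, with the converse handled by observing that a positive semi-definite, nonzero gap in any summand would contradict optimality for $(B_2)$ (the paper phrases this as a contradiction argument, you phrase it directly, but it is the same idea). The only cosmetic difference is your explicit appeal to uniqueness of minimizers at the end, which is not actually needed once you have shown that every truncation of a $(B_2)$-optimum minimizes the corresponding $\mM_n$.
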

\begin{proof}
See appendix.
\end{proof}

Based on Lemma  \ref{lem_2}, the solution to both the problem
(\ref{Eqsm_19}) and the problem (\ref{Eqsm_22}) can be simplified
and obtained by solving $N$ optimization problems separately.
\begin{lemma}\label{lem_3}
If  the primal sensor selection problem (\ref{Eqsm_19}) has an
optimal solution, then both the problem (\ref{Eqsm_19}) and the
problem (\ref{Eqsm_22}) can be transformed to the equivalent problem
that solves $N$ optimization problems that maximize
$\tilde{\mH}_{k+n}'\tilde{\mR}_{k+n}^{+}\tilde{\mH}_{k+n}$,
$n=1,\ldots,N$ respectively, i.e.,
\begin{eqnarray}%
\label{Eqsm_2_7} \max_{\gamma_{k+n}^i} ~&&\tilde{\mH}_{k+n}'\tilde{\mR}_{k+n}^{+}\tilde{\mH}_{k+n}~~~~~ \mbox{for}~ n=1,\ldots, N,\\
%&&=(\mP_{k+N|k+N-1}^{-1}+\tilde{\mH}_{k+N}'\tilde{\mR}_{k+N}^{+}\tilde{\mH}_{k+N})^{-1}\\
\mbox{subject to}~
\nonumber&&\sum_{i=1}^L\gamma_{k+n}^i=m_{k+n},\\
\nonumber&&\gamma_{k+n}^i\in \{0,1\}, i=1,2,\ldots,L,
\end{eqnarray}
where $\tilde{\mH}_{k+n}$ and $\tilde{\mR}_{k+n}$ are defined in
Equations (\ref{Eqsm_7}) and (\ref{Eqsm_07}) respectively; the
superscript ``$^+$" indicates Moore-Penrose generalized inverse
\cite{BenIsrael-Greville03}. That is, the problems (\ref{Eqsm_19}),
(\ref{Eqsm_22}) and (\ref{Eqsm_2_7}) have the same optimal solution.
%Thus, $\trace(\tilde{\mH}_{k+n}'\tilde{\mR}_{k+n}^{+}\tilde{\mH}_{k+n})$ can be taken as an information measure of the $(k+n)$-th step.
\end{lemma}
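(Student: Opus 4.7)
The plan is to combine Theorem~\ref{thm_1} with a \Lowner-order monotonicity chain to decouple the joint covariance minimization into $N$ stage-wise matrix maximizations, and then to invoke Lemma~\ref{lem_2}. Writing $\mG_n(\gamma_{k+n}) := \tilde{\mH}_{k+n}'\tilde{\mR}_{k+n}^{+}\tilde{\mH}_{k+n}$, Theorem~\ref{thm_1} gives the two recursions
\[
\mP_{k+n|k+n}^{-1} = \mP_{k+n|k+n-1}^{-1} + \mG_n(\gamma_{k+n}),
\qquad
\mP_{k+n+1|k+n} = \mF_{k+n}\mP_{k+n|k+n}\mF_{k+n}' + \mQ_{k+n}.
\]
The key structural point visible here is that $\mG_n$ depends on $\gamma_{k+n}$ alone, whereas the prior $\mP_{k+n|k+n-1}$ depends only on $\gamma_{k+1},\ldots,\gamma_{k+n-1}$.

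Second, I would record three elementary \Lowner-order facts on the positive-definite cone: (a)~$\mA\mapsto\mA^{-1}$ reverses the order; (b)~$\mA\mapsto \mF_{k+n}\mA\mF_{k+n}'+\mQ_{k+n}$ preserves it, since $\mF_{k+n}$ is invertible; (c)~adding the same matrix to both sides preserves it. Because $\mQ_k\succ 0$ and each $\mF_k$ is invertible, the intermediate covariances remain PD along the entire recursion, so (a)--(c) apply throughout. Chaining these operations yields the forward implication: if $\gamma^*_{k+n}$ is a \Lowner-maximizer of $\mG_n$ for every $n=1,\ldots,N$, then $(\gamma^*_{k+1},\ldots,\gamma^*_{k+n})$ is a \Lowner-minimizer of $\mP_{k+n|k+n}$ for every $n$.

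Third, I would use the hypothesis that (\ref{Eqsm_19}) admits a \Lowner-optimum $\gamma^*$ to obtain the converse. For any feasible $\gamma^\circ_{k+n}$, consider the perturbed selection that replaces only the $n$-th block of $\gamma^*$ by $\gamma^\circ_{k+n}$ and propagate the difference through the recursions. Then $\mP_{k+N|k+N}(\gamma^\circ)-\mP_{k+N|k+N}(\gamma^*)$ is linked to $\mG_n(\gamma^\circ_{k+n})-\mG_n(\gamma^*_{k+n})$ by a finite composition of the operations (a)--(c), all of which preserve not only the \Lowner order but also strict indefiniteness. Consequently, if $\mG_n(\gamma^\circ_{k+n})\npreceq\mG_n(\gamma^*_{k+n})$, the final difference would fail to be PSD, contradicting optimality of $\gamma^*$. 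Therefore $\gamma^*_{k+n}$ must be a \Lowner-maximizer of $\mG_n$ for every $n$, that is, $\gamma^*$ solves the $N$ decoupled problems~(\ref{Eqsm_2_7}).

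The two preceding paragraphs together show that $(\gamma^*_{k+1},\ldots,\gamma^*_{k+n})$ minimizes $\mP_{k+n|k+n}$ for every $n$, so the hypothesis of Lemma~\ref{lem_2} with $\mM_n:=\mP_{k+n|k+n}$ is satisfied; Lemma~\ref{lem_2} then gives that (\ref{Eqsm_19}) and (\ref{Eqsm_22}) share the same optimal solution, which also coincides with the solution of (\ref{Eqsm_2_7}). I expect the main obstacle to lie in the inertia-preservation claim used in the third paragraph: one must check carefully that composing inversion on the PD cone, congruence by the invertible $\mF_k$, and addition of a common matrix can never convert an indefinite difference into a PSD one---this is precisely where the standing invertibility hypotheses on $\mF_k$ and $\mQ_k$ become essential.
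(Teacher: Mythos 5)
Your proof is correct and follows essentially the same route as the paper: both rest on the recursion from Theorem~\ref{thm_1}, the \Lowner-order facts that inversion reverses and congruence/addition preserve the order on the positive-definite cone, the temporal separability of the constraints, and a final appeal to Lemma~\ref{lem_2}. The only difference is organizational --- you split the argument into an explicit sufficiency direction and a one-block perturbation argument for necessity, whereas the paper asserts the equivalence at each stage of a recursive peeling-off decomposition; your converse step is, if anything, spelled out more carefully than the paper's.
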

\begin{proof}
See appendix.
\end{proof}
\begin{remark}\label{rmk_1}
Lemma \ref{lem_3} shows that multistage look-ahead policies, i.e.,
the problem (\ref{Eqsm_19}) and the problem (\ref{Eqsm_22}), are
equivalent to a myopic sensor selection policy that maximizes the
generalized information gain with a lower computational complexity.
Why do the problem (\ref{Eqsm_19}) and the problem (\ref{Eqsm_22})
with different objectives have the same optimal solution? The main
reason is that the objectives and constraints are temporally
separable. For example, consider $m_{k+n}=1$, i.e., select one
sensor at each time step, if there is a sensor which has the
smallest noise and provides the most information at each time step,
then the selection of the sensor at each time step is the optimal
sensor selection scheme no matter whether the objective is the final
estimation error covariance or the average estimation error
covariance.
% When  the constraint is temporally separable
\end{remark}

Moreover, based on Lemmas \ref{lem_1} and \ref{lem_3}, we have the
following theorem.

\begin{theorem}\label{thm_2}
If  the primal sensor selection problem (\ref{Eqsm_19}) has an
optimal solution, both the problem (\ref{Eqsm_19}) and the problem
(\ref{Eqsm_22}) can be transformed to the equivalent problem
requiring the solution of $N$ optimization problems that maximize
$\trace(\tilde{\mH}_{k+n}'\tilde{\mR}_{k+n}^{+}\tilde{\mH}_{k+n})$,
$n=1,\ldots,N$ respectively, i.e.,
\begin{eqnarray}%
\label{Eqsm_2_07} \max_{\gamma_{k+n}^i} ~&&\trace(\tilde{\mH}_{k+n}'\tilde{\mR}_{k+n}^{+}\tilde{\mH}_{k+n})~~~~~ \mbox{for}~ n=1,\ldots, N,\\
%&&=(\mP_{k+N|k+N-1}^{-1}+\tilde{\mH}_{k+N}'\tilde{\mR}_{k+N}^{+}\tilde{\mH}_{k+N})^{-1}\\
\mbox{subject to}~
\nonumber&&\sum_{i=1}^L\gamma_{k+n}^i=m_{k+n},\\
\nonumber&&\gamma_{k+n}^i\in \{0,1\}, i=1,2,\ldots,L,
\end{eqnarray}
where  $\tilde{\mH}_{k+n}$ and $\tilde{\mR}_{k+n}$ are defined in
Equations (\ref{Eqsm_7}) and (\ref{Eqsm_07}) respectively. That is,
the problems (\ref{Eqsm_19}), (\ref{Eqsm_22}) and (\ref{Eqsm_2_07})
have the same optimal solution.
%Thus, $\trace(\tilde{\mH}_{k+n}'\tilde{\mR}_{k+n}^{+}\tilde{\mH}_{k+n})$ can be taken as an information measure of the $(k+n)$-th step.
\end{theorem}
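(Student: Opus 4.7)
The plan is to chain together Lemma~\ref{lem_3} and Lemma~\ref{lem_1}, treating Theorem~\ref{thm_2} as essentially a corollary of the two. The heart of the matter is that Lemma~\ref{lem_3} has already reduced the multistage look-ahead problems (\ref{Eqsm_19}) and (\ref{Eqsm_22}) to a collection of $N$ decoupled myopic matrix-maximization problems over the generalized information gain $\tilde{\mH}_{k+n}'\tilde{\mR}_{k+n}^{+}\tilde{\mH}_{k+n}$, so all that remains is to replace each matrix maximization by its scalar trace counterpart.

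First, I would invoke Lemma~\ref{lem_3} to assert that, under the hypothesis that (\ref{Eqsm_19}) admits an optimal solution, the problems (\ref{Eqsm_19}) and (\ref{Eqsm_22}) share the same optimal $\gamma_{k+n}^i$'s as the $N$ separate L\"owner-sense maximizations in (\ref{Eqsm_2_7}). In particular, for each $n=1,\ldots,N$ the feasible set $\mathcal{S}_n=\{\gamma_{k+n}\in\{0,1\}^L:\sum_{i=1}^{L}\gamma_{k+n}^i=m_{k+n}\}$ is finite, and (by the same hypothesis together with Lemma~\ref{lem_3}) the matrix-sense maximum of $\tilde{\mH}_{k+n}'\tilde{\mR}_{k+n}^{+}\tilde{\mH}_{k+n}$ over $\mathcal{S}_n$ is attained. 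This is precisely the existence premise required by Lemma~\ref{lem_1}.

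Next, I would apply Lemma~\ref{lem_1} to each of the $N$ individual problems in (\ref{Eqsm_2_7}), identifying $\mM(\vx)$ with $\tilde{\mH}_{k+n}'\tilde{\mR}_{k+n}^{+}\tilde{\mH}_{k+n}$ and $\vx$ with $\gamma_{k+n}$. Since Lemma~\ref{lem_1} guarantees that L\"owner-sense maximization of a matrix-valued objective is equivalent to maximization of its trace whenever a L\"owner-sense optimum exists, each problem in (\ref{Eqsm_2_7}) has the same set of optimizers as the corresponding trace problem in (\ref{Eqsm_2_07}). Stringing these equivalences together across $n=1,\ldots,N$ gives the desired equivalence of (\ref{Eqsm_19}), (\ref{Eqsm_22}), and (\ref{Eqsm_2_07}).

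I do not anticipate a substantive obstacle here, since the two supporting lemmas do all of the heavy lifting; the only point that merits care is the bookkeeping check that the hypothesis of Lemma~\ref{lem_1} is really met, i.e., that each of the $N$ myopic matrix problems in (\ref{Eqsm_2_7}) has a L\"owner-sense optimum. This is the one place a reader might object, so I would state it explicitly and derive it from Lemma~\ref{lem_3} before invoking Lemma~\ref{lem_1}. After that, the proof reduces to a single sentence combining the two equivalences.
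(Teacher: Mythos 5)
Your proposal matches the paper's own route exactly: the paper states Theorem~\ref{thm_2} as an immediate consequence of Lemma~\ref{lem_3} (which decouples (\ref{Eqsm_19}) and (\ref{Eqsm_22}) into the $N$ matrix-valued problems (\ref{Eqsm_2_7}), each of which is noted to have an optimal solution) followed by Lemma~\ref{lem_1} (trace equivalence), and gives no further proof. Your explicit check that the existence hypothesis of Lemma~\ref{lem_1} is supplied by Lemma~\ref{lem_3} is the right bookkeeping and is consistent with the paper.
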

%\begin{proof}
%See appendix.
%\end{proof}
\begin{remark}\label{rmk_2}
Theorem \ref{thm_2} shows that both the minimization of the final
estimation error covariance and  minimization of the average
estimation error covariance are equivalent to maximization of the
trace of the generalized information gain of each time step. Thus,
the objective function
\begin{eqnarray}
\label{Eqsm_019}
\trace(\tilde{\mH}_{k+n}'\tilde{\mR}_{k+n}^{+}\tilde{\mH}_{k+n})
\end{eqnarray}
of the problem (\ref{Eqsm_2_7}), i.e., trace of the generalized
information gain, is defined as a measure of information that the
selected sensors  provide at $(k+n)$-th time step. Determinant of
the generalized information gain can be similarly defined as a
measure of information if
$\tilde{\mH}_{k+n}'\tilde{\mR}_{k+n}^{+}\tilde{\mH}_{k+n}$ is a
positive definite matrix.
%If
%$\tilde{\mH}_{k+n}'\tilde{\mR}_{k+n}^{+}\tilde{\mH}_{k+n}$ is
%positive definite matrix and the condition of Theorem \ref{thm_2} is
%satisfied, maximizing
%$\trace(\tilde{\mH}_{k+n}'\tilde{\mR}_{k+n}^{+}\tilde{\mH}_{k+n})$
%is equivalent to maximize
%$\det(\tilde{\mH}_{k+n}'\tilde{\mR}_{k+n}^{+}\tilde{\mH}_{k+n})$.
%Both of them is equivalent to maximize $
%\tilde{\mH}_{k+n}'\tilde{\mR}_{k+n}^{+}\tilde{\mH}_{k+n}$.
When the measurement noises are assumed independent, more
information measures based on  information gain can be formulated,
see e.g., \cite{Xiong-Svensson02}.

Furthermore, the information measure (\ref{Eqsm_019}) has an
advantage that it does not depend on pdfs of the states and
measurements, but only relies on covariances of noises and the
measurement matrices. Maximizing this measure can be employed as an
alternative criterion for sensor selection, which will be used in
sensor selection problems when the constraints are temporally
inseparable  in the following sections. When pdfs are known, it is
better to try to use the information criteria based on pdfs such as
optimization of Fisher information, entropy or mutual information
for sensor selection (see, e.g., \cite{Hero-Cochran11}).

%In the following sections, we will focus on
%$g(\cdot)=\trace(\cdot)$, since it has a lower computation
%complexity than $g(\cdot)=\det(\cdot)$ in general.
\end{remark}

\section{Sensor Selection Schemes for Uncorrelated Sensor Measurement Noises}\label{sec_4}
\subsection{Optimal Sensor Selection Scheme for Temporally Separable
Constraints}\label{sec_4_1}
%\subsection{Optimal
%sensor selection for uncorrelated sensor measurement noises}
%
%Notice that for any given $\gamma_{k+1}^i, i=1,\ldots,L$ and
%measurements $\vz_{1:k+1}$, based on state equation (\ref{Eqsm_1})
%and measurement equation (\ref{Eqsm_3}) $\mP_{k+1|k+1}$ can be
%recursively obtained by optimal Kalman filter fusion formulate for
%uncorrelated sensor measurement noises (see, e.g., \cite{}) as
%follows.
When sensor measurement noises are uncorrelated and the constraints
are temporally separable, we have the following result that defines
the optimal sensor selection scheme.

\begin{theorem}\label{thm_3}
Let the information measure corresponding to the $i$-th sensor at
$(k+n)$-th time be denoted as
$a_{k+n}^i\triangleq\trace({(\mH_{k+n}^i)}'$
$(\mR_{k+n}^i)^{-1}\mH_{k+n}^i), i=1,\ldots,L$.
%$\{\trace({(\mH_{k+n}^{i_1})}'(\mR_{k+n}^{i_1})^{-1}\mH_{k+n}^{i_1}), \ldots,
%\trace({(\mH_{k+n}^{i_L})}'(\mR_{k+n}^{i_L})^{-1}\mH_{k+n}^{i_L})\}$
Let $\{a_{k+n}^{i_1},\ldots$, $a_{k+n}^{i_L}\}$ denote
%$\{\trace({(\mH_{k+n}^1)}'$ $(\mR_{k+n}^1)^{-1}$ $\mH_{k+n}^1),\ldots,\trace({(\mH_{k+n}^L)}'(\mR_{k+n}^L)^{-1}\mH_{k+n}^L)\}$
$\{a_{k+n}^1,\ldots$, $a_{k+n}^L\}$ rearranged in descending order.
If the problem (\ref{Eqsm_19}) has an optimal solution, then the
optimal sensor selection scheme for both  the problem
(\ref{Eqsm_19}) and the problem (\ref{Eqsm_22}) is
$\gamma_{k+n}^{i_1}=1,\ldots,\gamma_{k+n}^{i_{m_{k+n}}}=1$,
$\gamma_{k+n}^{i_{m_{k+n+1}}}=0,\ldots,\gamma_{k+n}^{i_L}=0$, for
$n=1,\ldots,N$. The optimality of sensor selection scheme is in the
sense of either the minimization of the covariance of the final
estimation error (\ref{Eqsm_8}) or the average estimation error
(\ref{Eqsm_9}) or maximization of the information measure
(\ref{Eqsm_019}). If the problem (\ref{Eqsm_19}) does not have an
optimal solution, the optimality of the sensor selection scheme is
only in the sense of maximization of the information measure
(\ref{Eqsm_019}).
%That is selecting  $m_{k+n}$ sensors
%which corresponds to first $m_{k+n}$ maximum elements in
%$\{\trace({(\mH_{k+n}^1)}'$ $(\mR_{k+n}^1)^{-1}\mH_{k+n}^1),\ldots,$
%$\trace({(\mH_{k+n}^L)}'(\mR_{k+n}^L)^{-1}$ $\mH_{k+n}^L)\}$, for each time $n$, $n=1,\ldots,N$.
%, then the optimal
%sensor selection scheme
%%in sense of minimizing
%%$\trace(\mP_{k+1|k+1}^{-1})$
%is selecting the $m_{k+1}$ sensors which corresponds to $m_{k+1}$ number of
%maximum $\trace((\mH_k^i)'(\mR_k^i)^{-1}\mH_k^i)$. Without loss of
%generality, we assume the first $m_{k+1}$ sensors have maximum
%$\trace((\mH_k^i)'(\mR_k^i)^{-1}\mH_k^i)$, then
%Morover, the state estimates are updated by the following optimal recursive fusion
%formula, for $n=0,\ldots, N,-1$,
%\begin{eqnarray}%
%\mP_{k+n+1|k+n+1} &=&\{\mP_{k+n+1|k+n}^{-1}+\sum_{s=1}^{m_{k+n+1}} (\mH_{k+1}^{i_s})'(\mR_{k+1}^{i_s})^{-1}\mH_{k+1}^{i_s}\}^{-1}\\[3mm]
%\nonumber \vx_{k+n+1|k+n+1} &=&\mP_{k+n+1|k+n+1}
%\{\mP_{k+n+1|k+n}^{-1}\vx_{k+n+1|k+n}\\[3mm]
%&&+\sum_{s=1}^{m_{m_{k+n+1}}}[{(\mP^{i_s}_{k+n+1|k+n+1})}^{-1}\vx_{k+n+1|k+n+1}^i-{(\mP^{i_s}_{k+n+1|k+n})}^{-1}\vx_{k+n+1|k+n}^{i_s}]\}\\[3mm]
%\nonumber&=&\mP_{k+n+1|k+n+1}
%\{\mP_{k+n+1|k+n}^{-1}\vx_{k+n+1|k+n}\\[3mm]
%&&+\sum_{s=1}^{m_{k+n+1}}(\mH_{k+n+1}^{i_s})'(\mR_{k+n+1}^{i_s})^{-1}\vy_{k+n+1}^{i_s}\}.
%\end{eqnarray}
%
%The (59) and (60) are used depending on the sensors transmitting
%estimates or measurements respectively.
\end{theorem}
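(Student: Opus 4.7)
The plan is to invoke Theorem \ref{thm_2} to reduce the multistage matrix optimization problems (\ref{Eqsm_19}) and (\ref{Eqsm_22}) to the $N$ decoupled scalar problems (\ref{Eqsm_2_07}), and then exploit the uncorrelated structure to decompose the generalized information gain into a sum of per-sensor contributions, at which point the optimization becomes a trivial $0$--$1$ knapsack with a cardinality constraint whose solution is precisely the top-$m_{k+n}$ rule stated in the theorem.

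First I would note that when the sensor measurement noises are uncorrelated, $\mR_k^{ij}=\mZero$ for $i\neq j$, so by (\ref{Eqsm_07}) the noise covariance $\tilde{\mR}_{k+n}$ is block diagonal with diagonal blocks $\gamma_{k+n}^i\mR_{k+n}^i$. Since the Moore-Penrose generalized inverse of a block diagonal matrix is the block diagonal matrix of pseudoinverses, and since $\gamma_{k+n}^i\in\{0,1\}$ gives either the zero block (whose pseudoinverse is zero) or $\mR_{k+n}^i$ itself (which is invertible by assumption), we obtain
\begin{equation*}
\tilde{\mR}_{k+n}^{+}=\diag\bigl(\gamma_{k+n}^1(\mR_{k+n}^1)^{-1},\ldots,\gamma_{k+n}^L(\mR_{k+n}^L)^{-1}\bigr).
\end{equation*}
Combined with the definition (\ref{Eqsm_7}) of $\tilde{\mH}_{k+n}$ and the identity $(\gamma_{k+n}^i)^3=\gamma_{k+n}^i$, a direct expansion yields
\begin{equation*}
\tilde{\mH}_{k+n}'\tilde{\mR}_{k+n}^{+}\tilde{\mH}_{k+n}=\sum_{i=1}^L\gamma_{k+n}^i(\mH_{k+n}^i)'(\mR_{k+n}^i)^{-1}\mH_{k+n}^i,
\end{equation*}
so taking the trace and using linearity gives $\trace(\tilde{\mH}_{k+n}'\tilde{\mR}_{k+n}^{+}\tilde{\mH}_{k+n})=\sum_{i=1}^L\gamma_{k+n}^i a_{k+n}^i$.

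With this reduction, problem (\ref{Eqsm_2_07}) becomes, for each $n$, the integer linear program of maximizing $\sum_i\gamma_{k+n}^i a_{k+n}^i$ subject to $\sum_i\gamma_{k+n}^i=m_{k+n}$ and $\gamma_{k+n}^i\in\{0,1\}$. Its optimal solution is clearly obtained by setting $\gamma_{k+n}^i=1$ for the $m_{k+n}$ indices with the largest $a_{k+n}^i$ values, i.e., the indices $i_1,\ldots,i_{m_{k+n}}$ in the descending arrangement, and zero otherwise, which is exactly the selection scheme asserted in the theorem. To finish, I would split the conclusion into the two cases stated: if problem (\ref{Eqsm_19}) admits a \Lowner-optimal matrix solution, then Theorem \ref{thm_2} guarantees that this same greedy selector simultaneously minimizes the final covariance (\ref{Eqsm_8}), minimizes the average covariance (\ref{Eqsm_9}), and maximizes the information measure (\ref{Eqsm_019}); if no \Lowner-optimal solution to (\ref{Eqsm_19}) exists, the reduction via Theorem \ref{thm_2} is unavailable, but the scalar argument above still delivers optimality of the greedy scheme with respect to the trace criterion (\ref{Eqsm_019}).

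The main obstacle I anticipate is not the optimization itself, which is elementary once decoupled, but rather the clean verification that $\tilde{\mR}_{k+n}^{+}$ has the claimed block-diagonal form when some $\gamma_{k+n}^i$ are zero; this requires carefully invoking the block-diagonal pseudoinverse identity and checking that the absence of cross-sensor correlations is exactly what keeps the off-diagonal blocks of $\tilde{\mR}_{k+n}$ zero, so that no mixing terms enter the generalized information gain. Once that algebraic step is secured, the rest is a sorting argument combined with the equivalence already furnished by Theorem \ref{thm_2}.
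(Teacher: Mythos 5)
Your proposal is correct and follows essentially the same route as the paper's proof: use the block-diagonal structure of $\tilde{\mR}_{k+n}$ under uncorrelated noises to get $\tilde{\mR}_{k+n}^{+}=\diag(\gamma_{k+n}^1(\mR_{k+n}^1)^{-1},\ldots,\gamma_{k+n}^L(\mR_{k+n}^L)^{-1})$, invoke Theorem \ref{thm_2} to reduce to the per-step trace maximization $\sum_i\gamma_{k+n}^i a_{k+n}^i$, and conclude by sorting. Your treatment is, if anything, slightly more explicit than the paper's about the pseudoinverse identity and the two-case conclusion, but the argument is the same.
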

\begin{proof}
If the measurement noises are uncorrelated between sensors, then
$\tilde{\mR}_{k+n}$ is a block diagonal matrix with
$\tilde{\mR}_{k+n}^{ij}=0, i\neq j$. Thus,
\begin{eqnarray}
\label{Eqsm_new_3_1}\tilde{\mR}_{k+n}^{+}=(\diag(\gamma_{k+n}^1R_{k+n}^1,\ldots,\gamma_{k+n}^LR_{k+n}^L))^+
=\diag(\gamma_{k+n}^1(R_{k+n}^1)^{-1},\ldots,\gamma_{k+n}^L(R_{k+n}^L)^{-1})
\end{eqnarray}
which follows from the definition of Moore-Penrose generalized
inverse. Moreover, by Theorem \ref{thm_2}, the problem
(\ref{Eqsm_19}) is equivalent to
\begin{eqnarray}%
\nonumber\max_{\gamma_{k+n}^i} ~&&\trace(\tilde{\mH}_{k+n}'\tilde{\mR}_{k+n}^{+}\tilde{\mH}_{k+n})~~~~~ \mbox{for}~ n=1,\ldots, N,\\
\nonumber&&=\sum_{i=1}^L \gamma_{k+n}^i\trace({(\mH_{k+n}^i)}'({\mR_{k+n}^i})^{-1}\mH_{k+n}^i)\\
\mbox{subject to}~
\nonumber&&\sum_{i=1}^L\gamma_{k+n}^i=m_{k+n},\\
\nonumber&&\gamma_{k+n}^i\in \{0,1\}, i=1,2,\ldots,L.
\end{eqnarray}
If we define
$a_{k+n}^i\triangleq\trace({(\mH_{k+n}^i)}'(\mR_{k+n}^i)^{-1}\mH_{k+n}^i),
i=1,\ldots,L$ and $\{a_{k+n}^{i_1},\ldots$, $a_{k+n}^{i_L}\}$
denotes $\{a_{k+n}^1,\ldots$, $a_{k+n}^L\}$  rearranged in
descending order, then the optimal solution is
$\gamma_{k+n}^{i_1}=1,\ldots,\gamma_{k+n}^{i_{m_{k+n}}}=1$,
$\gamma_{k+n}^{i_{m_{k+n+1}}}=0,\ldots,\gamma_{k+n}^{i_L}=0$.
%Moreover, by the optimal fusion formula in \cite{}, we have
%(\ref{})--(\ref{}).
\Box
\end{proof}

%\section{Sensor Selection Schemes for Cross-Uncorrelated Sensor Measurement Noises and Temporally Inseparable Constraints}\label{sec_4}
\subsection{Extension to Temporally Inseparable
Constraints}\label{sec_4_2}

Many constraints on sensor selection can be represented as linear
equalities or inequalities such as logical constraints and budget
constraints (see, e.g.,
\cite{Joshi-Boyd09,Mo-Ambrosino-Sinopoli11}).
%In some situations, another typical constraint on sensor selection  is
%energy constraint which can be represented as linear equalities or inequalities on
%the variables of $\gamma_{k+n}^i$, i.e.,
%\begin{eqnarray}%
% &&\sum_{n=1}^N\gamma_{k+n}^i\leq m_k^i,~ i=1,\ldots,L.
%\end{eqnarray}
Let us denote linear equalities or inequalities as follows
\begin{eqnarray}%
\label{Eqsm_new_3_2}\va_p'\gamma ~\unrhd~b_p,~ p=1,\ldots,P,
\end{eqnarray}
where
\begin{eqnarray}
\label{Eqsm_new_3_3}\gamma\triangleq(\gamma_{k+1}',\ldots,\gamma_{k+N}')',
~~\gamma_{k+n}\triangleq(\gamma_{k+n}^1,\ldots,\gamma_{k+n}^L)',
~\mbox{for}~  n=1,\ldots, N;
\end{eqnarray}
$\va_p$ is a vector with a compatible dimension and $b_p$ is a
scalar; ``$\unrhd$" can represent either ``$\geq$" ``$\leq$" or
``$=$" for each $n$. In general, these constraints are temporally
inseparable,  which makes the optimization problems with objectives
(\ref{Eqsm_19}) and (\ref{Eqsm_22}) not separable and highly
nonlinear in variables $\gamma_{k+n}^i$. The corresponding
optimization problems are very hard to solve.

However, from Remark \ref{rmk_2}, the trace of generalized
information ,
$\trace(\tilde{\mH}_{k+n}'\tilde{\mR}_{k+n}^{+}\tilde{\mH}_{k+n})$,
can be defined as the measure of information that  selected sensors
provide. Thus, we can try to  maximize the available information
gain from time $k+1$ to $k+N$ by optimizing the selection of sensors
so that better estimation performance can be expected. We shall
consider the following objective (i.e, sum of the weighted
information measure)
\begin{eqnarray}%
\label{Eqsm_new_3_4}f_3&\triangleq&\sum_{n=1}^N\omega_n\trace(\tilde{\mH}_{k+n}'\tilde{\mR}_{k+n}^{+}\tilde{\mH}_{k+n}),
\end{eqnarray}
where $\omega_n, n=1,\ldots,N$ are weights which place different
importance on different time steps. For example, if the state
estimation at the final time is more important, a larger weight
$\omega_N$ can be used. If the state estimation of each time step is
equally important, an equal weight structure
$\omega_1=\cdots=\omega_N$
 can be used. Therefore, we consider the following
optimization problem for sensor selection:
\begin{eqnarray}%
\label{Eqsm_new_3_5}\max_{\gamma_{k+n}^i} ~&&\sum_{n=1}^N\omega_n\trace(\tilde{\mH}_{k+n}'\tilde{\mR}_{k+n}^{+}\tilde{\mH}_{k+n}) \\
%&&=(\mP_{k+N|k+N-1}^{-1}+\tilde{\mH}_{k+N}'\tilde{\mR}_{k+N}^{+}\tilde{\mH}_{k+N})^{-1}\\
\nonumber\mbox{subject to}~
~ &&\va_p'\gamma ~\unrhd~b_p,~ p=1,\ldots,P,\\
\nonumber&&\gamma_{k+n}^i\in \{0,1\}, i=1,2,\ldots,L, n=1,\ldots, N.
\end{eqnarray}

%\begin{eqnarray}%
%(P_4^0)~~~~~\max_{\gamma_{k+n}^i} ~&&\sum_{n=1}^N\omega_n\trace(\tilde{\mH}_{k+n}'\tilde{\mR}_{k+n}^{+}\tilde{\mH}_{k+n}) \\
%%&&=(\mP_{k+N|k+N-1}^{-1}+\tilde{\mH}_{k+N}'\tilde{\mR}_{k+N}^{+}\tilde{\mH}_{k+N})^{-1}\\
%\mbox{subject to}~
%~ &&\sum_{i=1}^L\gamma_{k+n}^i=m_{k+n},~ n=1,\ldots,N,\\
% &&\sum_{n=1}^N\gamma_{k+n}^i\leq m_k^i,~ i=1,\ldots,L,\\
%&&\gamma_{k+n}^i\in \{0,1\}, i=1,2,\ldots,L.
%\end{eqnarray}

%\subsection{Sensor selection for uncorrelated sensor measurement
%noises}\label{sec_4_1}
Since sensor measurement noises are assumed uncorrelated in this
section, by Equation (\ref{Eqsm_new_3_1}), the problem
(\ref{Eqsm_new_3_5}) is equivalent to
\begin{eqnarray}%
\label{Eqsm_new_3_6}\max_{\gamma_{k+n}^i} ~&&\sum_{n=1}^N\omega_n\sum_{i=1}^L \gamma_{k+n}^i\trace({(\mH_{k+n}^i)}'({\mR_{k+n}^i})^{-1}\mH_{k+n}^i)\\
%&&=(\mP_{k+N|k+N-1}^{-1}+\tilde{\mH}_{k+N}'\tilde{\mR}_{k+N}^{+}\tilde{\mH}_{k+N})^{-1}\\
\nonumber\mbox{subject to}~
~ &&\va_p'\gamma ~\unrhd~b_p,~ p=1,\ldots,P,\\
\nonumber&&\gamma_{k+n}^i\in \{0,1\}, i=1,2,\ldots,L, n=1,\ldots,N,
\end{eqnarray}
which is a Boolean linear programming (BLP) problem and  the optimal
objective function value is denoted by $f_{BLP}^*$. It can be
relaxed by replacing the nonconvex constraints $\gamma_{k+n}^i\in
\{0,1\}$ with the convex constraints $0\leq\gamma_{k+n}^i\leq 1$,
$i=1,2,\ldots,L, n=1,\ldots,N$. Thus, we have the following LP
problem:
\begin{eqnarray}%
\label{Eqsm_new_3_7}\max_{\gamma_{k+n}^i} ~&&\trace(\Gamma\mD')\\
%&&=(\mP_{k+N|k+N-1}^{-1}+\tilde{\mH}_{k+N}'\tilde{\mR}_{k+N}^{+}\tilde{\mH}_{k+N})^{-1}\\
\nonumber\mbox{subject to}~
~ &&\va_p'\gamma ~\unrhd~b_p,~ p=1,\ldots,P,\\
\nonumber&&0\leq \gamma_{k+n}^i\leq 1, i=1,2,\ldots,L, n=1,\ldots,N,
\end{eqnarray}
where $\gamma$ is defined by (\ref{Eqsm_new_3_3}); $\Gamma$ is a
$L\times N$ matrix with $i$-th row and $n$-th column element being
$\gamma_{k+n}^i$, i.e,
\begin{eqnarray}%
\label{Eqsm_new_3_8}\Gamma&\triangleq&  \left(
                     \begin{array}{cccc}
                       \gamma_{k+1}^1 & \gamma_{k+2}^1 & \cdots& \gamma_{k+N}^1 \\
                      \gamma_{k+1}^2 &\gamma_{k+2}^2& \cdots& \gamma_{k+N}^2 \\
                       \vdots& \vdots& \ddots& \vdots\\
                      \gamma_{k+1}^L& \gamma_{k+2}^L & \cdots& \gamma_{k+N}^L\\
                     \end{array}
                   \right)
\end{eqnarray}
and $\mD$ is a $L\times N$ matrix
\begin{eqnarray}%
\label{Eqsm_new_3_9}\mD&\triangleq& \left(d_{in}
        \right)_{L\times N},~~ d_{in}\triangleq \omega_n\trace({(\mH_{k+n}^i)}'({\mR_{k+n}^i})^{-1}\mH_{k+n}^i).
\end{eqnarray}

It is well known that LP problems can be solved efficiently. The
solution of the problem (\ref{Eqsm_new_3_7}) is denoted by
$(\gamma_{k+n}^i)_{LP}^*$, $i=1,\ldots,L, n=1,\ldots,N$. The
corresponding objective function is denoted by $f_{LP}^*$. Note that
the feasible solution set of the problem (\ref{Eqsm_new_3_7})
contains that of the problem (\ref{Eqsm_new_3_6}) so that
$f_{BLP}^*\leq f_{LP}^*$. Based on $(\gamma_{k+n}^i)_{LP}^*$, we can
generate a suboptimal feasible solution of the problem
(\ref{Eqsm_new_3_6}) denoted by $\hat{\gamma}_{k+n}^i$,
$i=1,\ldots,L, n=1,\ldots,N$. The corresponding objective function
is denoted by $\hat{f}_{BLP}$ and $\hat{f}_{BLP}\leq f_{BLP}^*$. The
difference $g=f_{LP}^*-\hat{f}_{BLP}$ is called the gap in
\cite{Joshi-Boyd09}. The gap is useful in evaluating the performance
of the suboptimal solution $\hat{\gamma}_{k+n}^i$. We can say
$\hat{\gamma}_{k+n}^i$ is no more than $g$-suboptimal.

Note that the procedure of generating a feasible solution of the
problem (\ref{Eqsm_new_3_6})  from $(\gamma_{k+n}^i)_{LP}^*$ is
problem dependent, i.e., relying on the equalities or inequalities
(\ref{Eqsm_new_3_2}) and the Boolean constraint. As an illustration,
let us consider a representative example. Besides the temporally
separable constraints (\ref{Eqsm_20}) and (\ref{Eqsm_21}), we
consider an energy constraint  which is temporally inseparable as
follows
\begin{eqnarray}
\label{Eqsm_new_3_10}\sum_{n=1}^N\gamma_{k+n}^i\leq m_k^i,~
i=1,\ldots,L,
\end{eqnarray}
which means that the $i$-th sensor can only be selected $m_k^i$
times from time $k+1$ to time $k+N$ ($m_k^i<N$), for $
i=1,\ldots,L$. Thus, the specific form of the optimization problem
(\ref{Eqsm_new_3_7}) with the constraints (\ref{Eqsm_20}) and
(\ref{Eqsm_new_3_10}) can be represented to
\begin{eqnarray}%
\label{Eqsm_new_3_11}\max_{\gamma_{k+n}^i} ~&&\trace(\Gamma \mD')\\
%&&=(\mP_{k+N|k+N-1}^{-1}+\tilde{\mH}_{k+N}'\tilde{\mR}_{k+N}^{+}\tilde{\mH}_{k+N})^{-1}\\
\nonumber\mbox{subject to}~
~ &&\va_p'\gamma ~\unrhd~b_p,~ p=1,\ldots,P,\\
\nonumber&&0\leq \gamma_{k+n}^i\leq 1, i=1,2,\ldots,L, n=1,\ldots,N,
\end{eqnarray}
where $\Gamma$ and $\gamma$ are defined by (\ref{Eqsm_new_3_8}) and
(\ref{Eqsm_new_3_3}) respectively; $P=N+L$,
\begin{eqnarray}%
\nonumber\va_p&\triangleq&(\vc_{1,p}',\ldots,\vc_{N,p}')',
~\vc_{n,p}\triangleq\left\{
            \begin{array}{ll}
              \textbf{1}, & \hbox{$p=n$,} \\
              \textbf{0}, & \hbox{$p\neq n$, }
            \end{array}
          \right. n=1\ldots,N,\\
\label{Eqsm_new_3_12}b_p&\triangleq& m_{k+p}, ~``\unrhd" \mbox{means} ``=",\\
\nonumber&&~~\mbox{for}~ p=1,\ldots, N,~ (\mbox{corresponds to the constraints (\ref{Eqsm_20})}); \\[3mm]
\nonumber\va_{p}&\triangleq&(\textbf{1}_i',\ldots,\textbf{1}_i')',   \\
\label{Eqsm_new_3_012}b_p&\triangleq& m_{k}^i,~ ``\unrhd"
\mbox{means} ``\leq",\\
\nonumber&& ~~\mbox{for}~ p= N+i,~ i=1,\ldots,L, (\mbox{corresponds
to the constraints (\ref{Eqsm_new_3_10})}),
\end{eqnarray}
where \textbf{1} and \textbf{0} denote  $L$-dimensional vectors with
1 entries and  0 entries  respectively  and $\textbf{1}_i$ means an
$L$-dimensional vector whose  $i$-th entry is 1 others are 0s.

The sensor selection scheme with the energy constraint for
uncorrelated sensors is described by the following algorithm.

\begin{algorithm}[Sensor selection scheme with the energy constraint for uncorrelated sensors]\label{alg_2}
~
\begin{itemize}

\item Step 1: Given an optimal solution of (\ref{Eqsm_new_3_11}) $(\gamma_{k+n}^i)_{LP}^*$, obtain the optimal objective function $f_{LP}^*$.

\item Step 2: Generate a feasible solution of the problem (\ref{Eqsm_new_3_6}) with
the constraints (\ref{Eqsm_20}), (\ref{Eqsm_21}) and
(\ref{Eqsm_new_3_10}) from $(\gamma_{k+n}^i)_{LP}^*$ as follows.

We generate the feasible solution based on the importance (weight)
of the information of each time step. Without loss of generality,
assume that $\omega_1\leq\omega_2\leq\ldots\leq\omega_N$. Thus, we
generate the selection scheme from the $N$-th time step to the first
time step. Set the index set of candidate sensors
$\mathbbm{i}\triangleq\{1,\ldots,L\}$.
\begin{itemize}
%\item Generate $\hat{\gamma}_{k+N}^i$ for the $N$-th time step. Set the index set of candidate sensors $\mathbbm{i}\triangleq\{1,\ldots,L\}$.
%
%\begin{eqnarray}%
%\label{Eqsm_new_3_14} \hat{\gamma}_{k+N}^i &\triangleq&\left\{
%                            \begin{array}{ll}
%                              1, & \hbox{if~ $i\in \mathbbm{i}_1$,} \\
%                              0, & \hbox{if~ $i\in \mathbbm{i}_2$,}
%                            \end{array}
%                          \right., for~ i=1,\ldots,L,
%\end{eqnarray}
%where $\mathbbm{i}_1$ is the index set of the first $m_{k+N}$
%maximum entries of ($(\gamma_{k+N}^1)_{LP}^* ,\ldots,
%(\gamma_{k+N}^L)_{LP}^*$) and denote
%$\mathbbm{i}_2=\mathbbm{i}-\mathbbm{i}_1$. Set $m_k^i:=m_k^i-1$, for
%$i\in\mathbbm{i}_1$. Update $\mathbbm{i}\triangleq\{i:m_k^i>0,
%i=1,\ldots,L\}$.
%\item Step 3:
\item Iteratively generate $\hat{\gamma}_{k+N-n}^i$ for the $(N-n)$-th time step, $n=0,\ldots,(N-1)$ as follows:

for~~ $n=0:(N-1)$
\begin{eqnarray}%
\label{Eqsm_new_3_15}\hat{\gamma}_{k+N-n}^i &\triangleq&\left\{
                            \begin{array}{ll}
                              1, & \hbox{if~ $i\in \mathbbm{i}_1$,} \\
                              0, & \hbox{if~ $i\in \mathbbm{i}_2$,}
                            \end{array}
                          \right., for~ i=1,\ldots,L,
\end{eqnarray}
where $\mathbbm{i}_1$  is the index set of the first $m_{k+N-n}$
maximum entries of ($(\gamma_{k+N-n}^1)_{LP}^* ,\ldots$,
$(\gamma_{k+N-n}^L)_{LP}^*$) in the index set of candidate sensors
$\mathbbm{i}$ and denote $\mathbbm{i}_2=\mathbbm{i}-\mathbbm{i}_1$.
Set $m_k^i:=m_k^i-1$, for $i\in\mathbbm{i}_1$.
 Update the index set of candidate sensors
$\mathbbm{i}\triangleq\{i:m_k^i>0, i=1,\ldots,L\}$.

end
\end{itemize}

\item Step 3: Output  $g$-suboptimal solution $\hat{\gamma}_{k+n}^i, i=1,\ldots,L, n=1,\ldots,N$  and the corresponding
objective $\hat{f}_{BLP}$, where $g=f_{LP}^*-\hat{f}_{BLP}$ is the
gap.
\end{itemize}
\end{algorithm}
Here, to construct the feasible solution satisfying the constraints
(\ref{Eqsm_20}), (\ref{Eqsm_21}) and (\ref{Eqsm_new_3_10}), we
employ the equation
%(\ref{Eqsm_new_4_6})--
(\ref{Eqsm_new_3_15}). The main computation complexity is in Step 1
where an LP problem needs to be solved. Illustrative examples will
be presented in Section \ref{sec_6}.

%Similarly
%
%
%The second one is transforming the primal problem to a Boolean quadratic program by $(\gamma_{k+n}^i)^2=\gamma_{k+n}^i$,
%which can be efficiently solved and can generate a feasible solution easily.

\section{Sensor Selection Schemes for Correlated Sensor Measurement Noises}\label{sec_5}
%\section{Sensor Selection Schemes for Correlated Sensor Measurement Noises and Temporally Inseparable Constraints}\label{sec_4}

%\subsection{Sensor selection for correlated sensor measurement
%noises}\label{sec_3_2}

In this section, for correlated sensor measurement noises, we again
determine the sensor selection scheme by maximizing the weighted
information measure:
\begin{eqnarray}%
\label{Eqsm_new_4_1}\max_{\gamma_{k+n}^i} ~&&\sum_{n=1}^N\omega_n\trace(\tilde{\mH}_{k+n}'\tilde{\mR}_{k+n}^{+}\tilde{\mH}_{k+n}) \\
%&&=(\mP_{k+N|k+N-1}^{-1}+\tilde{\mH}_{k+N}'\tilde{\mR}_{k+N}^{+}\tilde{\mH}_{k+N})^{-1}\\
\nonumber\mbox{subject to}~
~ &&\va_p'\gamma ~\unrhd~b_p,~ p=1,\ldots,P,\\
\nonumber&&\gamma_{k+n}^i\in \{0,1\}, i=1,2,\ldots,L.
\end{eqnarray}
where the linear constraints are defined in (\ref{Eqsm_new_3_2})
that may include both the temporally separable and inseparable
constraints. Since sensor measurement noises are correlated,
% From Theorem
%\ref{thm_1}, if the sensor selection problem (\ref{Eqsm_19}) has an
%optimal solution, then the problems  (\ref{Eqsm_19}) and
%(\ref{Eqsm_22}) is equivalent to
%\begin{eqnarray}%
%\label{Eqsm_3_01} \max_{\gamma_{k+n}^i} ~&&\trace(\tilde{\mH}_{k+n}'\tilde{\mR}_{k+n}^{+}\tilde{\mH}_{k+n}), ~n=1,\ldots, N,\\
%\nonumber\mbox{subject to}~
%~ &&\sum_{i=1}^L\gamma_{k+n}^i=m_{k+n},\\
%\nonumber&&\gamma_{k+n}^i\in \{0,1\}, i=1,2,\ldots,L.
%\end{eqnarray}
%For each optimization problem (fixed $n$),
to obtain the optimal solution, an exhaustive search is necessary
since $\tilde{\mR}_{k+n}$ has no special structure such as it being
a diagonal matrix. For the simplest case of the temporally separable
constraint (\ref{Eqsm_20}) and $N=1$, there are a total of
$\left(\begin{array}{c}
                                                                                                                  L \\
                                                                                                                  m_{k+n}
                                                                                                                \end{array}\right)$
feasible solutions. For large $L$ and $m_{k+n}$, such an exhaustive
search may not be feasible in real time.
%If $m=1$, then the results is same as Theorem 1.
%How to solve ($P_2$) for $m>1$?
%Thus, the sensor selection problem is
%\begin{eqnarray}%
%\min_{\gamma_{k+1}^i} ~&&\mP_{k+1|k+1}(\gamma_{k+1}^1,\ldots,\gamma_{k+1}^L)\\
%&&=(\mP_{k+1|k}^{-1}+\tilde{\mH}_{k+1}'\mR_{k+1}^{-1}\tilde{\mH}_{k+1})^{-1}\\
%\mbox{subject to}~
%~ &&\sum_{i=1}^L\gamma_{k+1}^i=m_{k+n},\\
%&&\gamma_{k+1}^i\in \{0,1\}, i=1,2,\ldots,L.
%\end{eqnarray}
Thus, to make the solution computationally more efficient, the
problem (\ref{Eqsm_new_4_1}) is approximately solved by replacing
$\tilde{\mR}_{k+n}^{+}$ by $\mR_{k+n}^{-1}$. This approximation is
lossless for the case of uncorrelated sensor noises and temporally
separable constraint (i.e., does not change the optimal solution in
Theorem \ref{thm_3}). More discussion on approximation loss for
different dependences will be given in  Section \ref{sec_6}. Thus,
we consider the approximate problem
\begin{eqnarray}%
\label{Eqsm_new_4_2}\max_{\gamma_{k+n}^i} ~&&\sum_{n=1}^N\omega_n\trace(\tilde{\mH}_{k+n}'\mR_{k+n}^{-1}\tilde{\mH}_{k+n}) \\
%&&=(\mP_{k+N|k+N-1}^{-1}+\tilde{\mH}_{k+N}'\tilde{\mR}_{k+N}^{+}\tilde{\mH}_{k+N})^{-1}\\
\nonumber\mbox{subject to}~
~ &&\va_p'\gamma ~\unrhd~b_p,~ p=1,\ldots,P,\\
\nonumber&&\gamma_{k+n}^i\in \{0,1\}, i=1,2,\ldots,L.
\end{eqnarray}
%\begin{eqnarray}%
%\label{Eqsm_3_1} \max_{\gamma_{k+n}^i} ~&&\trace(\tilde{\mH}_{k+n}'\mR_{k+n}^{-1}\tilde{\mH}_{k+n}), ~n=1,\ldots, N,\\
%\nonumber\mbox{subject to}~
%~ &&\sum_{i=1}^L\gamma_{k+n}^i=m_{k+n},\\
%\nonumber&&\gamma_{k+n}^i\in \{0,1\}, i=1,2,\ldots,L.
%\end{eqnarray}
Moreover, from the definition of $\tilde{\mH}_{k+n}$ (\ref{Eqsm_7}),
we have
%the objective can be rewritten as follows
\begin{eqnarray}%
\nonumber&&\trace(\tilde{\mH}_{k+n}'\mR_{k+n}^{-1}\tilde{\mH}_{k+n})\\
\nonumber&=&\trace(\sum_{i=1}^L\sum_{s=1}^L\gamma_{k+n}^i\gamma_{k+n}^s(\mH_{k+n}^i)'\mT_{k+n}^{is}\mH_{k+n}^s)\\
\nonumber&=&\sum_{i=1}^L\sum_{s=1}^L\gamma_{k+n}^i\gamma_{k+n}^s\trace((\mH_{k+n}^i)'\mT_{k+n}^{is}\mH_{k+n}^s)\\
\label{Eqsm_new_4_3}&=&-\gamma_{k+n}'\mB_{k+n}\gamma_{k+n},
\end{eqnarray}
where $\mT_{k+n}^{is}$ is the $i$-th row block and $s$-th column
block of the matrix $\mT_{k+n}$, $\mT_{k+n}\triangleq
\mR_{k+n}^{-1}$; the $i$-th row and $s$-th column of $\mB_{k+n}$ is
$-\trace((\mH_{k+n}^i)'\mT_{k+n}^{is}\mH_{k+n}^s)$. Thus, the
problem is equivalent to solving the following Boolean quadratic
programming (BQP) problem
\begin{eqnarray}%
\label{Eqsm_new_4_4}\min_{\gamma_{k+n}^i} ~&&\sum_{n=1}^N\omega_n\gamma_{k+n}'\mB_{k+n}\gamma_{k+n} \\
%&&=(\mP_{k+N|k+N-1}^{-1}+\tilde{\mH}_{k+N}'\tilde{\mR}_{k+N}^{+}\tilde{\mH}_{k+N})^{-1}\\
\nonumber\mbox{subject to}~
~ &&\va_p'\gamma ~\unrhd~b_p,~ p=1,\ldots,P,\\
\nonumber&&\gamma_{k+n}^i\in \{0,1\}, i=1,2,\ldots,L, n=1,\ldots, N.
\end{eqnarray}
%\begin{eqnarray}%
%\label{Eqsm_3_3} \min_{\gamma_{k+n}^i} ~&&\gamma_{k+n}'\mB_{k+n}\gamma_{k+n}\\
%\nonumber\mbox{subject to}~
%~ &&\sum_{i=1}^L\gamma_{k+n}^i=m_{k+n},\\
%\nonumber&&\gamma_{k+n}^i\in \{0,1\}, i=1,2,\ldots,L,
%\end{eqnarray}
For this problem, however, it is still hard to obtain an optimal
solution, since the nonconvex Boolean constraints and $\mB_{k+n}$
may not be a positive semi-definite matrix. It is known to belong to
the class of NP-hard problems. Fortunately, this class of problems
can be solved by a recently developed  computationally efficient
approximation technique (see, e.g., \cite{Luo-Ma-So-Ye-Zhang10}). We
apply it to the problem (\ref{Eqsm_new_4_4}) as follows.

By semidefinite relaxation (SDR) technique (see, e.g.,
\cite{Boyd-Vandenberghe04,Luo-Ma-So-Ye-Zhang10}), the problem
(\ref{Eqsm_new_4_4}) can be relaxed to
\begin{eqnarray}%
\label{Eqsm_new_4_5}\min_{\mX\in\mathbb{S}^{(NL+1)},~\mX\succeq
\textbf{0}}
&&\trace(\mC\mX)\\
\nonumber\mbox{subject to}~
~ &&\trace(\mE_p\mX)\unrhd~(4b_p-\textbf{ 1}'\diag(\va_p)\textbf{1}),~~~ p=1,\ldots,P,\\
\nonumber&&\trace(\mF_s\mX)=1, ~s=1,\ldots,(NL+1),
\end{eqnarray}
where $\mF_s$ is a matrix with $s$-row and $s$-column
$\mF_s(s,s)=1$, others are equal 0, for $s=1,\ldots,(NL+1)$,
\begin{eqnarray}
\nonumber\mE_p&\triangleq& \left(
                       \begin{array}{cc}
                         \diag(\va_p) &\textbf{ 1} \\
                         \textbf{1 }'& 0  \\
                       \end{array}
                     \right),\\
\nonumber\mC&\triangleq&\left(
                       \begin{array}{cc}
                         \mB &\mB\textbf{ 1} \\
                         \textbf{1 }'\mB&0 \\
                       \end{array}
                     \right),\\
\nonumber\mB&\triangleq&\diag( \omega_1\mB_{k+1},\ldots,
\omega_N\mB_{k+N}),
\end{eqnarray}
%where $\mT_{k+n}^{is}$ is the $i$-th row block
%and $n$-th column block of the matrix $\mT_{k+n}$, $\mT_{k+n}\triangleq \mR_{k+n}^{-1}$; the $i$-th row and $n$-th column of $\mB_{k+n}$ is
%$-\trace((\mH_{k+n}^i)'\mT_{k+n}^{is}\mH_{k+n}^s)$.
where  $\mB_{k+n}$ is defined by (\ref{Eqsm_new_4_3}) for
$n=1,\ldots,N$; $\mI$ and $\textbf{1}$ are an identity matrix and a
``1" vector with compatible dimensions respectively. The problem
(\ref{Eqsm_new_4_5}) is an SDP  problem. The derivation of the
problem (\ref{Eqsm_new_4_5})  is given in Appendix.

%The sensor selection scheme with the energy constraint for correlated sensors can be obtained
%based on the SDP (\ref{Eqsm_new_4_5}), a typical Gaussian randomization procedure for BQP can be used to construct an approximation solution to
%the BQP (see \cite{}).

Note that the procedure for generating a feasible solution of the
problem (\ref{Eqsm_new_4_1}) from the solution of the problem
(\ref{Eqsm_new_4_5}) is also problem dependent, i.e., relying on the
equalities or inequalities (\ref{Eqsm_new_3_2}) and the Boolean
constraint. As an illustration, let us again consider the
representative constraints (\ref{Eqsm_20}), (\ref{Eqsm_21}) and
(\ref{Eqsm_new_3_10}).
% which is temporally inseparable as follows
%\begin{eqnarray}
%\label{Eqsm_new_3_10}\sum_{n=1}^N\gamma_{k+n}^i\leq m_k^i,~ i=1,\ldots,L,
%\end{eqnarray}
%which means $i$-th sensor can only be selected $m_k^i$ times from time $k+1$ to time $k+N$, for $ i=1,\ldots,L$.
Thus, the specific expressions of $\va_p$ and $b_p$ in the
optimization problem (\ref{Eqsm_new_4_5}) are given by
(\ref{Eqsm_new_3_12}) and (\ref{Eqsm_new_3_012}).

Based on the SDP (\ref{Eqsm_new_4_5}),
% with the constraints
%(\ref{Eqsm_20}), (\ref{Eqsm_21}) and (\ref{Eqsm_new_3_10}),
a typical Gaussian randomization procedure  is used to construct an
approximate solution to the problem (\ref{Eqsm_new_4_1}) here (see
\cite{Luo-Ma-So-Ye-Zhang10}). Thus, we have the following algorithm.

\begin{algorithm}[Sensor selection scheme with the energy constraint for correlated sensors]\label{alg_3}
~
\begin{itemize}
\item Step 1: Given an optimal solution of the SDP (\ref{Eqsm_new_4_5}) $X^*\in\mathbb{S}^{(NL+1)}$, and a number of randomizations  S.

\item Step 2: Generate $S$ feasible solutions by Gaussian randomization procedure based on $X^*$:

for s=1:S
\begin{itemize}
  \item[1.]
Generate a vector $\xi_s\sim \mathcal {N}(0, X^*)$. Set
$\eta_s\triangleq\xi_s(1:NL)$ which means the first NL entries of
$\xi_s$.

  \item[2.] Without loss of generality,  assume that $\omega_1\leq\omega_2\leq\ldots\leq\omega_N$. We
 generate the selection scheme from the $N$-th time step to the first time
 step. Set the index set of candidate sensors $\mathbbm{i}\triangleq\{1,\ldots,L\}$.

\begin{itemize}
%\item Generate $\gamma_{k+N,s}^i$ for the $N$-th time step. Set the index set of candidate sensors $\mathbbm{i}\triangleq\{1,\ldots,L\}$.
%
%\begin{eqnarray}%
%\label{Eqsm_new_4_6} \gamma_{k+N,s}^i &\triangleq&\left\{
%                            \begin{array}{ll}
%                              1, & \hbox{if~ $i\in \mathbbm{i}_1$,} \\
%                              0, & \hbox{if~ $i\in \mathbbm{i}_2$,}
%                            \end{array}
%                          \right., for~ i=1,\ldots,L,
%\end{eqnarray}
%where $\mathbbm{i}_1$ is the index set of the first $m_{k+N}$
%maximum entries of $\eta_s(J(L-1)+1:NL)$ and
%$\mathbbm{i}_2=\mathbbm{i}-\mathbbm{i}_1$. Set $m_k^i:=m_k^i-1$, for
%$i\in\mathbbm{i}_1$.  Update $\mathbbm{i}\triangleq\{i:m_k^i>0,
%i=1,\ldots,L\}$.

%\item Step 3:
\item Iteratively generate $\gamma_{k+N-n,s}^i$ for the $(N-n)$-th time step, $n=0,\ldots,(N-1)$

for~~ $n=0:(N-1)$
\begin{eqnarray}%
\label{Eqsm_new_4_7}\gamma_{k+N-n,s}^i &\triangleq&\left\{
                            \begin{array}{ll}
                              1, & \hbox{if~ $i\in \mathbbm{i}_1$,} \\
                              0, & \hbox{if~ $i\in \mathbbm{i}_2$,}
                            \end{array}
                          \right., for~ i=1,\ldots,L,
\end{eqnarray}
where $\mathbbm{i}_1$  is the index set of the first $m_{k+N-n}$
maximum entries of $\eta_s(J(L-n-1)+1:J(L-n))$ in the index set of
candidate sensors $\mathbbm{i}$ and
$\mathbbm{i}_2=\mathbbm{i}-\mathbbm{i}_1$. Set $m_k^i:=m_k^i-1$, for
$i\in\mathbbm{i}_1$. Update the index set of candidate sensors
$\mathbbm{i}\triangleq\{i:m_k^i>0, i=1,\ldots,L\}$.

end
\end{itemize}

\item[3.] Denote $\gamma_{k+N-n,s}\triangleq(\gamma_{k+N-n,s}^1,\ldots,\gamma_{k+N-n,s}^L)$
and $\gamma_{s}\triangleq(\gamma_{k+1,s},\ldots,\gamma_{k+N,s})$.
\end{itemize}
end

\item Step 3:  Determine $s^*=\mbox{argmax}_{s=1,\ldots,S}$
$f(\gamma_{s})$ where $f(\cdot)$ may be the objective funcrions
$f_1$, $f_2$ or $f_3$ defined in (\ref{Eqsm_8}), (\ref{Eqsm_9}) and
(\ref{Eqsm_new_3_4}) respectively.
%\sum_{n=1}^N\omega_n\trace(\tilde{\mH}_{k+n,s}'\tilde{\mR}_{k+n,s}^{+}\tilde{\mH}_{k+n,s})$
%where $\tilde{\mH}_{k+n,s}$ and $\tilde{\mR}_{k+n,s}$ are defined as
%(\ref{Eqsm_7})--(\ref{Eqsm_07}) by replacing $\gamma_{k+n}^i$ to
%$\gamma_{k+n,s}^i$.

\item Step 4:  Output  $\hat{\gamma}=\gamma_{s^*}$ as the sensor selections of  the problem (\ref{Eqsm_new_4_1}).

\end{itemize}
\end{algorithm}

%Note that specific design of the randomization procedure technique
%is problem dependent. Here, to construct the feasible solution
%satisfying the constraints of the problem (\ref{Eqsm_3_01}), we have
%the equation (\ref{Eqsm_new_3_14}).
%%More results on approximation bounds of BQP can be seen in \cite{Luo-Ma-So-Ye-Zhang10}, in practice the randomized solution can often achieve a  performance
%%that is well within those bounds with a small $S$.
%The choice of $S$ will be discussed in simulation section like in
%\cite{Luo-Ma-So-Ye-Zhang10}.  In simulations,  the randomized
%solution can often achieve a good performance with a small $S$. The
%main computation complexity of the algorithm is the solving SDP in
%Step 1 which can be solved efficiently by using interior-point
%methods with a worst-case complexity of $\mathcal
%{O}(max\{(L+2)^4(L+1)^{1/2}\log(1/\epsilon)\})$ given a solution
%accuracy $\epsilon>0$ (see \cite{Luo-Ma-So-Ye-Zhang10}).

Note that specific design of the randomization procedure technique
is problem dependent. Here, to construct the feasible solution
satisfying the constraints (\ref{Eqsm_20}), (\ref{Eqsm_21}) and
(\ref{Eqsm_new_3_10}), we employ Equation
%(\ref{Eqsm_new_4_6})--
(\ref{Eqsm_new_4_7}).
%More results on approximation bounds of BQP can be seen in \cite{}, in practice the randomized solution can often achieve a  performance
%that is well within those bounds.
The choice of $S$ will  be discussed in  Section \ref{sec_6}.  Based
on simulations, the randomized solution can often achieve a good
performance with a small $S$, which is similar to that in
\cite{Luo-Ma-So-Ye-Zhang10}. The main computational complexity of
the algorithm is in Step 1 where an SDP problem needs to be solved.
The SDP problem can be solved efficiently by using interior-point
methods (see, e.g., \cite{Boyd-Vandenberghe04}).
%with a worst-case complexity of $\mathcal
%{O}((L+1)^4(N+1)^4(NL+1)^{1/2}\log(1/\epsilon))$ given a solution
%accuracy $\epsilon>0$ (see \cite{Luo-Ma-So-Ye-Zhang10}), where
%$P=N+L$ is used.

\section{Numerical Examples}\label{sec_6}
In this section, we present a number of illustrative examples. Both
uncorrelated and correlated sensor measurement noise cases are
considered.
\subsection{Uncorrelated sensor measurement noises}\label{sec_6_1}
We first compare the performance of the approach given in Theorem
\ref{thm_3} with the one in Joshi an Boyd \cite{Joshi-Boyd09} and
the one in Mo et al. \cite{Mo-Ambrosino-Sinopoli11}.
\begin{example} Let us consider a dynamic system with $L=40$
sensors which are uniformly distributed over a square of size 100
$m$.
%The index of sensors is given in Figure \ref{}.
% A target crosses the region from the initial position $(600~m, 200~ m)$
%with an initial $x$-coordinate and $y$-coordinate velocity
%$(-20~m/s, 0~m/s)$.
%The position of the fusion center is $(0~m, 200~ m)$.
The parameter matrices and noise covariances for the dynamic system
(\ref{Eqsm_1})--(\ref{Eqsm_4}) are
%\begin{eqnarray}%
%\label{Eqsm_1}\vx_{k+1}&=&
%\mF_k\vx_k+\vw_k,\\[3mm]
%\label{Eqsm_2}\vy_k^i&=&\mH_k^i\vx_k+\vv_k^i,~~ i=1,2,\ldots,L,\\[3mm]
%\label{Eqsm_3}\vz_k^i&=&\gamma_k^i\mH_k^i\vx_k+\gamma_k^i\vv_k^i,~~
%i=1,2,\ldots,L,
%\end{eqnarray}
%where
\begin{eqnarray}%
\label{Eqsm_new_5_01}&&\mF_k= \left(
                                 \begin{array}{cccc}
                                   1 & 0\\
                                   0 & 1 \\
                                 \end{array}
                               \right),\mQ_k= \left(
                                 \begin{array}{cccc}
                                   5& 0 \\
                                   0 & 10\\
                                 \end{array}
                               \right),\\[3mm]
\label{Eqsm_new_5_001}&&~\mH_k^i=\left(
                                          \begin{array}{cccc}
                                            1 & 0 \\
                                            0 & 1 \\
                                          \end{array}
                                        \right),~\mR_k^i=\left(
                                          \begin{array}{cc}
                                            r_{1}^i & 0 \\
                                            0 & r_{2}^i \\
                                          \end{array}
                                        \right),
                                        i=1,\ldots,L,
\end{eqnarray}
where $r_{1}^i$ and $r_{2}^i$ are randomly sampled from the uniform
distribution in [5, 7] and [10 12] respectively.
%$L=9$ sensors  are uniformly distributed in the monitoring region (see Figure \ref{}).
We consider a constraint, i.e., select $m_{k+n}=[1, 5, 10, 15, 20]$
sensors from 40 sensors at the next time step respectively.
\end{example}
In Figure \ref{fig_01}, the traces of the estimation error
covariance  are plotted for the sensor selection method given in
Theorem \ref{thm_3}, the one in Joshi an Boyd \cite{Joshi-Boyd09}
and the one in Mo et al. \cite{Mo-Ambrosino-Sinopoli11}
respectively. The CPU time is plotted in Figure \ref{fig_02} for the
three algorithms respectively. Figure  \ref{fig_01} shows that the
three methods obtained very close and similar estimation performance
for the numerical example,  while Figure  \ref{fig_02} shows that
the CPU time of the method in Theorem \ref{thm_3} is much smaller
than that of the one in Joshi an Boyd \cite{Joshi-Boyd09} and the
one in Mo et al. \cite{Mo-Ambrosino-Sinopoli11}. The reason is that
the method in Theorem \ref{thm_3} is an analytical solution. In
addition, the computation time  of the three methods is not an
increasing function of the number of selected sensors. The reason is
that when the number of selected sensors increases, the number of
the decision variables does not increase and the structure of the
optimization does not change; only some parameters of the equality
constraints are changed.

Moreover, we  consider a representative target tracking dynamic
system with energy constraints. We assume that each target will be
tracked in a Cartesian frame. The four state variables include
position and velocity $(x, \dot{x}, y, \dot{y})$ respectively (see
e.g., \cite{BarShalom-Li95}). The parameter matrices and noise
covariances for the dynamic system (\ref{Eqsm_1})--(\ref{Eqsm_4})
are
\begin{eqnarray}%
\label{Eqsm_new_5_01}&&\mF_k= \left(
                                 \begin{array}{cccc}
                                   1 & T & 0 & 0 \\
                                   0 & 1& 0 &0 \\
                                   0 & 0 & 1 & T \\
                                   0 & 0 & 0& 1 \\
                                 \end{array}
                               \right),\mQ_k= \left(
                                 \begin{array}{cccc}
                                   T^3/3 & T^2/2 & 0 & 0 \\
                                   T^2/2 & T&0 &0 \\
                                   0 & 0 & T^3/3 & T^2/2\\
                                   0 & 0 & T^2/2& T \\
                                 \end{array}
                               \right),\\[3mm]
\label{Eqsm_new_5_001}&&~\mH_k^i=\left(
                                          \begin{array}{cccc}
                                            1 & 0 & 0 & 0 \\
                                            0 & 0& 1 & 0 \\
                                          \end{array}
                                        \right),~~~\mR_k^i,
                                        i=1,\ldots,L,
\end{eqnarray}
where $T=1$ s is the sampling interval; $\mF_k$, $\mQ_k$, $\mH_k^i$
are the same in the following examples. The difference is  the noise
covariance of measurements, $\mR_k^i$, in the following examples.
Since the algorithm in Joshi an Boyd \cite{Joshi-Boyd09} that
requires the measurement matrix is full-column rank when the
measurement matrices of each sensor are the same and the one in Mo
et al. \cite{Mo-Ambrosino-Sinopoli11} does not present how to
threshold the approximate solution to generate a feasible solution
satisfying the energy constraints,  we evaluate the performance of
Algorithm \ref{alg_2} by comparing with the exhaustive method for a
monitoring system which has a small number of sensors and using the
gap given in the Step 3 of Algorithm \ref{alg_2}  for a monitoring
system which has a large number of sensors in the following examples
respectively.

\begin{example} First, to compare with the exhaustive method, let us consider a relatively small monitoring system with $L=9$
sensors which are uniformly distributed over a square of size 100
$m$. The parameter matrices of the dynamic system are given in
(\ref{Eqsm_new_5_01})--(\ref{Eqsm_new_5_001}) where
\begin{eqnarray}
~\mR_k^i=\left(
                                          \begin{array}{cc}
                                            r_{1}^i & 0 \\
                                            0 & r_{2}^i \\
                                          \end{array}
                                        \right),
                                        \end{eqnarray}
%\begin{eqnarray}%
%\label{Eqsm_1}\vx_{k+1}&=&
%\mF_k\vx_k+\vw_k,\\[3mm]
%\label{Eqsm_2}\vy_k^i&=&\mH_k^i\vx_k+\vv_k^i,~~ i=1,2,\ldots,L,\\[3mm]
%\label{Eqsm_3}\vz_k^i&=&\gamma_k^i\mH_k^i\vx_k+\gamma_k^i\vv_k^i,~~
%i=1,2,\ldots,L,
%\end{eqnarray}
%where
$r_{1}^i$ and $r_{2}^i$ are randomly sampled from the uniform
distribution in [5, 10].
%$L=9$ sensors  are uniformly distributed in the monitoring region (see Figure \ref{}).
We consider the optimization problem (\ref{Eqsm_new_3_7}) with
temporally inseparable constraint (\ref{Eqsm_new_3_10}) and the
constraints (\ref{Eqsm_20}), (\ref{Eqsm_21}) where $N=3$,
$m_{k+n}=2$, $n=1,\ldots,N$ and $m_k^i=2$, i.e., select 2 sensors
from 9 sensors at each time step and select each sensor less than
twice in 3 time steps.
\end{example}

In Figure \ref{fig_1}, the traces of the \emph{final} estimation
error covariance $f_1$ based on the three methods are plotted
respectively, where $r_{1}^i$ and $r_{2}^i$ are randomly sampled 50
times. The three methods are 1) the exhaustive method that minimizes
the final estimation error covariance $f_1$, 2) Algorithm
\ref{alg_2} that maximizes the weighted information measure $f_3$
with weights $[1/3,~ 1/3,~ 1/3]$ and 3) Algorithm  \ref{alg_2} that
maximizes the weighted information measure $f_3$ with weights $[0,~
0,~ 1]$ respectively. Similarly, the traces of the \emph{average}
estimation error covariance $f_2$ are plotted in Figure \ref{fig_2}.
In Figure \ref{fig_3}, the sum of information measures of $N$ time
steps
$\sum_{n=1}^N\trace(\tilde{\mH}_{k+n}'\tilde{\mR}_{k+n}^{+}\tilde{\mH}_{k+n})$
is plotted  for the two sensor selection schemes respectively. They
are obtained from 2) and 3) respectively.

%1) Algorithm \ref{alg_2} that maximizes the weighted information
%measure  $f_3$ with weights $[1/3,~ 1/3,~ 1/3]$ and 2) Algorithm
%\ref{alg_2} that maximizes the weighted information measure $f_3$
%with weights $[0,~ 0,~ 1]$ respectively.

From Figures \ref{fig_1}--\ref{fig_3}, we have following
observations:
\begin{itemize}
\item Figure \ref{fig_1} shows that  the  trace of the \emph{final}
estimation error covariances obtained from Algorithm \ref{alg_2}
with two different weights that maximizes the weighted information
measure $f_3$ are very close to that of the exhaustive method.
Similarly, Figure \ref{fig_2} shows that  the trace of
\emph{average} estimation error covariance obtained from Algorithm
\ref{alg_2} with weights $[1/3,~ 1/3,~ 1/3]$ that maximizes the
weighted information measure $f_3$ is very close to that of the
exhaustive method. These indicate that maximization of the weighted
information measure $f_3$ is a good alternative criterion for
minimizing \emph{final} or \emph{average} estimation error
covariance for sensor selection.

\item Moreover, in Figure  \ref{fig_1}, when the objective is
minimization of the \emph{final} estimation error covariance $f_1$,
both Algorithm \ref{alg_2} with weights $[0,~ 0,~ 1]$ and Algorithm
\ref{alg_2} with weights $[1/3,~ 1/3,~ 1/3]$ are near optimal for
sensor selection. However, Figure  \ref{fig_3} shows that  the sum
of information measures of $N$ time steps for Algorithm \ref{alg_2}
with the weights $[1/3,~ 1/3,~ 1/3]$ is larger than that of
Algorithm \ref{alg_2} with the weights $[0,~ 0,~ 1]$. Thus, it is
better to choose the weights $[0,~ 0,~ 1]$, since a larger  sum of
information measures of $N$ time steps implies that more good
sensors are used.

\item Finally, Figure  \ref{fig_2} shows that when the objective is
to minimize the \emph{average} estimation error covariance $f_2$, it
is better to choose the weights $[1/3,~ 1/3,~ 1/3]$ since Algorithm
\ref{alg_2} with the weights $[1/3,~ 1/3,~ 1/3]$ is near optimal.

 \end{itemize}

\begin{example}
Next, let us consider a large monitoring system with $L=20\times
20=400$ sensors which are uniformly distributed in a square  of size
100 $m$. We consider the optimization problem (\ref{Eqsm_new_3_7})
with  temporally inseparable constraint (\ref{Eqsm_new_3_10}) and
the constraints (\ref{Eqsm_20}), (\ref{Eqsm_21}) where $N=5$,
$m_{k+n}=10$, $n=1,\ldots,N$ and $m_k^i=2$, i.e., select 10 sensors
at each time step from 400 sensors and select each sensor less than
twice in 5 time steps. Moreover, we consider the performance of
Algorithm \ref{alg_2} for different cases of $m_{k+n}$ from 10 to
100. Obviously, the exhaustive method is infeasible.
\end{example}

In Figure \ref{fig_4}, the upper bound and lower bound of the
objective function of the optimization problem (\ref{Eqsm_new_3_7})
are plotted based on 50 Monte Carlo runs. The corresponding gaps,
i.e, the upper bound minus the lower bound are plotted in Figure
\ref{fig_5}. Figures \ref{fig_4} and \ref{fig_5} show that the gaps
are very small and Algorithm \ref{alg_2} can obtain the optimal
solution in the sense of maximizing the weighted information measure
$f_3$ in many cases, although the sensor network is large where  the
Boolean decision variables are more than 2000. Figure \ref{fig_05}
presents the gaps as a function of $m_{k+n}$ from 10 to 100. It
shows that the gaps are increasing as the number of selected
sensors.

%We shall compare Algorithm \ref{alg_2} with the exhaustive  method. Performance of various $S$ (numbers of randomizations)
%in Gaussian randomization procedure will be given.

\subsection{Correlated sensor measurement noises}\label{sec_6_2}
In this subsection, we will compare Algorithm \ref{alg_3} with the
exhaustive method for a simple problem so that the approximation
loss can be computed. For this, we assume that only the sensor
selection scheme for the next step is to be designed, i.e., $N=1$.
For $N=5$, we will compare Algorithm \ref{alg_3} with Algorithm
\ref{alg_2} that ignores dependence. In this case, the exhaustive
method is infeasible, since we have to enumerate $2.43\times
10^{10}$ cases. At the end, an example that compares the root mean
square error (RMSE) of state estimation based on sensor selection is
presented.

\begin{example}
Let us consider $L=25$ for the sensor network shown in Figure
\ref{fig_0}. Assume that there is a jammer signal $\vv_k^0$ with a
covariance $\mR_k^0$ at the position $(550~m, 200~ m)$, besides the
natural noises $\vv_k^i, i=1,\ldots,L$ which are independent of
$\vv_k^0$. The jamming signal introduces dependence among
measurement noises. Thus, the noises at the $i$-th sensor is given
as follows
\begin{eqnarray}
\label{Eqsm_new_5_c01}\breve{\vv}_k^i &=&\vv_k^i+\frac{P_0}{1+\alpha
d_{i,0}^n}\vv_k^0=\vv_k^i+\beta_i\vv_k^0
\end{eqnarray}
where $\beta_i\triangleq\frac{P_0}{1+\alpha d_{i,0}^n}$; $d_{i,0}$
is the distance between the jammer and the $i$-th sensor; the signal
decay exponent $n=2$, the scaling parameter $\alpha=1$ and different
values for the signal power
$P_0=[1,~3,~6,~10,~12,~15,~20]\times10^5$ are used in simulations
respectively. Thus, noises of sensors are correlated and the $i$-th
block and $j$-th block of the noise covariance $\breve{\mR}_k$ can
be computed by (\ref{Eqsm_new_5_c01}) to be
\begin{eqnarray}
\label{Eqsm_new_5_c1}\breve{\mR}_k^{ij}&=&Cov(\breve{\vv}_k^i,\breve{\vv}_k^j)
=\mR_k^{ij}+\beta_i\beta_j\mR_k^0,
\end{eqnarray}
where
\begin{eqnarray}
\nonumber \mR_k^{ij}=\left\{
              \begin{array}{ll}
                \left(
           \begin{array}{cc}
             10 & 0\\
             0 & 10  \\
           \end{array}
         \right), & \hbox{if $i=j$} \\[5mm]
                \left(
           \begin{array}{cc}
             0 & 0\\
             0 & 0 \\
           \end{array}
         \right), & \hbox{$i\neq j$.}
              \end{array}
            \right.
,~~ \mR_k^0=\left(
           \begin{array}{cc}
             1 & 0\\
             0 & 1\\
           \end{array}
         \right),
\end{eqnarray}
are used in simulations. Note that the corresponding Pearson's
correlation coefficients between sensors are approximately equal
 to $[0.1,~0.2,~0.4,$ $~0.6,~0.7$, $~0.8,~0.9]$ corresponding
to $P_0=[1,~3,~6,~10,~12,~15,~20]\times10^5$ respectively. We
consider the optimization problem (\ref{Eqsm_new_4_1}) with
temporally separable constraints (\ref{Eqsm_20}), (\ref{Eqsm_21})
where $N=1$, $m_{k+1}=2$,   i.e., select 2 sensors from 25 sensors
at the next time step.

\end{example}

In Figure \ref{fig_6}, comparisons of the objective function
$\trace(\tilde{\mH}_{k+1}'\tilde{\mR}_{k+1}^{+}\tilde{\mH}_{k+1})$
of the optimization problem (\ref{Eqsm_new_4_1}) (i.e., the
information measure of the $(k+1)$-th time step) based on the
exhaustive method, Algorithm \ref{alg_3} and Theorem \ref{thm_3}
that ignores dependence are plotted for different jammer signal
powers respectively. We present the performance of Algorithm
\ref{alg_3} with small numbers of randomizations $S=20, S=100$.
Similarly, comparisons of the traces of the estimation error
covariance of $(k+1)$-th time step are plotted in Figure
\ref{fig_7}.

From Figures \ref{fig_6}--\ref{fig_7}, we have the following
observations:
\begin{itemize}
\item For all the three
methods, the larger is the signal power of jammer, the smaller is
the information measure of the $(k+1)$-th time step obtained from
the selected sensors and the larger is the trace of the  estimation
error covariance at the $(k+1)$-th time step.

\item Figures \ref{fig_6}--\ref{fig_7} also show that the exhaustive
method yields better results than Algorithm \ref{alg_3}  with small
$S$ and the latter is better than the method of Theorem \ref{thm_3}
that ignores dependence, especially in the case of strong dependence
(i.e., strong signal power of the jammer).

\item Figures \ref{fig_6}--\ref{fig_7} indicate that larger the value of $S$ is , the closer is the performance of Algorithm \ref{alg_3} to
that of the exhaustive method, i.e., the smaller is the
approximation loss of Algorithm \ref{alg_3}.
\end{itemize}

\begin{example}
Next, let us consider a  monitoring system with a large $N=5$ and
$L=5\times 5=25$ sensors which are uniformly distributed in a square
of size 100 m . We consider the optimization problem
(\ref{Eqsm_new_3_7}) with temporally inseparable constraint
(\ref{Eqsm_new_3_10}) and the constraints (\ref{Eqsm_20}),
(\ref{Eqsm_21}) where $m_{k+n}=2$, $n=1,\ldots,N$ and $m_k^i=2$,
i.e., select 2 sensors at each time step from 25 sensors and select
each sensor less than twice in next 5 time steps.
\end{example}

In Figure \ref{fig_8}, comparisons of the objective function
$\sum_{n=1}^N\omega_n\trace(\tilde{\mH}_{k+n}'\tilde{\mR}_{k+n}^{+}\tilde{\mH}_{k+n})$
of the optimization problem (\ref{Eqsm_new_4_1}) (i.e., the sum of
the weighted information measure of $N$ time steps, $f_3$ defined in
(\ref{Eqsm_new_3_4})) based on approaches of Algorithm \ref{alg_3}
and Algorithm \ref{alg_2} that ignores dependence are plotted for
different jammer signal powers respectively. We examine the
performance of Algorithm \ref{alg_3} as a function of  the number of
randomizations $S=20, S=100, S=2000$ which are small, compared with
the exhaustive number $2.43\times 10^{10}$. Similarly, comparisons
of the traces of the average estimation error covariances of $N$
time steps are plotted in Figure \ref{fig_9}.

 Figures \ref{fig_8}--\ref{fig_9}  show that Algorithm
\ref{alg_3} with a small value of $S$ is better than Algorithm
\ref{alg_2} that ignores dependence, especially in the case of
strong dependence (i.e., strong signal power of the jammer). In
addition, Figures \ref{fig_8}--\ref{fig_9} indicate that  larger the
value of $S$ is, the better is the performance of Algorithm
\ref{alg_3} than that of Algorithm \ref{alg_2} that ignores
dependence.

\begin{example}
Finally, let us consider the $L$-sensor noise covariance $\mR_k,
k=1,2,\ldots$  which  depends on the state $\vx_k$. A frequently
made assumption is that larger is the distance between the sensor
and the target,  larger is the noise covariance. However, when we
design the sensor selection scheme of next $N$ time steps at time
$k$, we do not know the state  $\vx_{k+n}$ so that we replace it by
the state prediction $\vx_{k+n|k}$ which is used to compute the
$\mR_{k+n}, n=1,\ldots, N$. Specifically, the noise covariance
$\mR_{k+n}$ is
\begin{eqnarray}%
           \mR_{k+n} = \bar{\mR}_{k+n}+ \breve{\mR}_{k+n},
\end{eqnarray}
where $\breve{\mR}_{k+n}$ is the noise covariance from the jammer
signal $\vv_k^0$ defined in (\ref{Eqsm_new_5_c1}) and the signal
power of jammer $P_0=1.5\times 10^6$; $\bar{\mR}_{k+n}$ is a
diagonal matrix with the $i$-th diagonal block defined as follows
\begin{eqnarray}%
           \bar{\mR}_{k+n}^{ii}=\left(
                           \begin{array}{cc}
                              \alpha_1 d_{i,n}& 0 \\
                              0& \alpha_1 d_{i,n} \\
                           \end{array}
            \right),~ i=1,\ldots,L,~, n=1,\ldots,N,
\end{eqnarray}
where  $\alpha_1=0.05$ is a scaling parameter; $d_{i,n}$ is the
distance between the target prediction $\vx_{k+n|k}$ and the $i$-th
sensor. We consider the optimization problem (\ref{Eqsm_new_3_7})
with temporally separable constraint (\ref{Eqsm_new_3_10}) and the
constraints (\ref{Eqsm_20}), (\ref{Eqsm_21}) where $N=5$,
$m_{k+n}=2$, $n=1,\ldots,N$ and $m_k^i=2$, i.e., select 2 sensors at
each time step and select each sensor less than twice in 5 time
steps. The initial state of the target is $(600~m, -20~m/s, 200~ m,
0~m/s)$.
%The trajectory of the sensor selection scheme is given in
%Figure \ref{}.
\end{example}
In Figure  \ref{fig_10}, RMSE of the state estimates based on 200
Monte Carlo runs is given. We compare   Algorithm \ref{alg_2} that
ignores dependence with Algorithm \ref{alg_3} with S=20, S=100 and
S=2000 respectively.

As far as the RMSE is considered, Figure  \ref{fig_10} also shows
that Algorithm \ref{alg_3} with a small value of $S$ is better than
Algorithm \ref{alg_2} that ignores dependence and that larger the
value of $S$ is, the better is the performance of Algorithm
\ref{alg_3} than that of Algorithm \ref{alg_2} that ignores
dependence.

\section{Conclusion}\label{sec_7}
In this paper,  we have proposed a generalized information filter
for target tracking in wireless sensor networks where measurements
from a subset of sensors are employed at each time step. Then, under
a regularity condition, we proved that the multistage look-ahead
policy that minimizes either the final or the average estimation
error covariances of next $N$ time steps is equivalent to the myopic
sensor selection policy that maximizes the trace of the generalized
information gain at each time step. When the measurement noises are
uncorrelated, the optimal solution has been derived analytically for
sensor selection with temporally separable constraints. For
temporally inseparable constraints, the sensor selection scheme can
be obtained by approximately solving an LP problem. Although there
is no guarantee that the gap between the performance of the chosen
subset and the performance bound is always small, numerical examples
showed that the algorithm is near-optimal in many cases and   the
selection scheme for a large sensor network with more than 2000
Boolean decision variables
 can be dealt with quickly. Finally, when the noises
of measurements are correlated, the sensor selection problem with
temporally inseparable constraints was relaxed to a BQP problem
which can be efficiently solved by a Gaussian randomization
procedure by solving an SDP problem which can be solved by
interior-point methods and related software tools. Numerical
examples showed that the proposed method is much better than the
method that ignores dependence.
% The
%selection scheme of a large sensor network with Boolean decision
%variables more than 100  can be dealt with quickly.

Future work will involve the generalization from the linear dynamic
systems to nonlinear dynamic systems. The equivalence  between
multistage look-ahead optimization policy for sensor management and
the myopic sensor optimization policy and the corresponding sensor
management schemes will be investigated. In addition, it can be
considered for various applications such as robotics, sensor
placement for structures and different types of wireless networks.

\section*{Appendix}
\textbf{The proof of Theorem \ref{thm_1}.}
\begin{proof} Notice that there are $m_{k+1}$ number of $\gamma_{k+1}^i=1$ and $L-m_{k+1}$
number of $\gamma_{k+1}^i=0$ so that there exists a  permutation
matrix $\mP$ such that
\begin{eqnarray}
\label{Eqsm_A_1}\mP\tilde{\mH}_{k+1} &=&\left(
                         \begin{array}{cc}
                           \mH_{k+1}(1:m_{k+1})  \\
                           \textbf{0}   \\
                         \end{array}
                       \right)
\end{eqnarray}
and
\begin{eqnarray}
\label{Eqsm_A_2}\mP\tilde{\mR}_{k+1}\mP' &=&\left(
                         \begin{array}{cc}
                           \mR_{k+1}(1:m_{k+1}) & \textbf{0} \\
                           \textbf{0} & \textbf{0} \\
                         \end{array}
                       \right),
\end{eqnarray}
where  $\mH_{k+1}(1:m_{k+1})$ and $\mR_{k+1}(1:m_{k+1})$  are the
stacked measurement matrices and the covariance of noises  of the
$m_{k+1}$ selected sensor respectively; $\textbf{0}$ is a zero
matrix with compatible dimensions. From the property of the
permutation matrix $\mP\times\mP'=\mP'\times\mP=\mI$ and the
definition of Moore-Penrose inverse, we have
\begin{eqnarray}
\nonumber\mP\tilde{\mR}_{k+1}^{+} \mP'&=&(\mP\tilde{\mR}_{k+1}\mP')^{+} \\[3mm]
\nonumber   &=&\left(
                         \begin{array}{cc}
                           \mR_{k+1}(1:m_{k+1}) & \textbf{0} \\
                           \textbf{0} & \textbf{0} \\
                         \end{array}
                       \right)^{+}\\[3mm]
\label{Eqsm_A_3}&=&\left(
                         \begin{array}{cc}
                           (\mR_{k+1}(1:m_{k+1}))^{-1} & \textbf{0} \\
                           \textbf{0} & \textbf{0} \\
                         \end{array}
                       \right)
\end{eqnarray}
Moreover, by equations (\ref{Eqsm_A_1})--(\ref{Eqsm_A_3}) and
repeatedly using the definition of Moore-Penrose generalized inverse
and the property of the permutation matrix
$\mP\times\mP'=\mP'\times\mP=\mI$, we have the following derivation
\begin{eqnarray}
\nonumber&&\left(
\tilde{\mR}_{k+1}+\tilde{\mH}_{k+1}\mP_{k+1|k}\tilde{\mH}_{k+1}'
                     \right)^{+}\\[3mm]
\nonumber&=&\left( \mP'\left(\begin{array}{cc}
            \mR_{k+1}(1:m_{k+1}) &\textbf{0} \\
                                            \textbf{0} & \textbf{0} \\
                                          \end{array}\right)\mP\right.\\
\nonumber&&\left. +\mP '\left(\begin{array}{cc}
             \tilde{\mH}_{k+1}(1:m_{k+1})\mP_{k+1|k}\tilde{\mH}_{k+1}(1:m_{k+1})') &\textbf{0} \\
                                            \textbf{0} & \textbf{0} \\
                                          \end{array}\right)\mP
                     \right)^{+}\\[3mm]
\nonumber&=&\left( \mP'\left(\begin{array}{cc}
            \mR_{k+1}(1:m_{k+1})+ \tilde{\mH}_{k+1}(1:m_{k+1})\mP_{k+1|k}\tilde{\mH}_{k+1}(1:m_{k+1})') &\textbf{0} \\
                                            \textbf{0} & \textbf{0} \\
                                          \end{array}\right)\mP
                     \right)^{+}
\end{eqnarray}
\begin{eqnarray}
\nonumber&=&\mP'\left(\begin{array}{cc}
              (\mR_{k+1}(1:m_{k+1})+ \tilde{\mH}_{k+1}(1:m_{k+1})\mP_{k+1|k}\tilde{\mH}_{k+1}(1:m_{k+1})') &\textbf{0} \\
                                            \textbf{0} & \textbf{0} \\
                                          \end{array}
                      \right)^{+}\mP\\[3mm]
\nonumber&=&\mP'\left(\begin{array}{cc}
              (\mR_{k+1}(1:m_{k+1})+ \tilde{\mH}_{k+1}(1:m_{k+1})\mP_{k+1|k}\tilde{\mH}_{k+1}(1:m_{k+1})')^{-1} &\textbf{0} \\
                                            \textbf{0} & \textbf{0} \\
                                          \end{array}
                      \right)\mP\\[3mm]
\nonumber&=&\mP'\left(
                             \begin{array}{cc}
                                          (\mR_{k+1}(1:m_{k+1}))^{-1}  (\mI+ \tilde{\mH}_{k+1}(1:m_{k+1}) &\textbf{0} \\
                                           \cdot\mP_{k+1|k}\tilde{\mH}_{k+1}(1:m_{k+1})'(\mR_{k+1}(1:m_{k+1}))^{-1})^{-1} & \\[3mm]
                                            \textbf{0} & \textbf{0} \\
                              \end{array}
                      \right)\mP\\[3mm]
\nonumber&=&\mP'\left(
                         \begin{array}{cc}
                           (\mR_{k+1}(1:m_{k+1}))^{-1} & \textbf{0} \\
                           \textbf{0} & \textbf{0} \\
                         \end{array}
                       \right)\\[3mm]
\nonumber&&\cdot\left(
                                          \begin{array}{cc}
                                           (\mI+ \tilde{\mH}_{k+1}(1:m_{k+1})\mP_{k+1|k}\tilde{\mH}_{k+1}(1:m_{k+1})'(\mR_{k+1}(1:m_{k+1}))^{-1})^{-1} &\textbf{0} \\
                                            \textbf{0} & \mI \\
                                          \end{array}
                      \right)\mP
\end{eqnarray}
\begin{eqnarray}
\nonumber&=&\mP'\left(
                         \begin{array}{cc}
                           (\mR_{k+1}(1:m_{k+1}))^{-1} & \textbf{0} \\
                           \textbf{0} & \textbf{0} \\
                         \end{array}
                       \right)\\[3mm]
\nonumber&&\cdot\left(\mI+\left(
                                          \begin{array}{cc}
                                            \tilde{\mH}_{k+1}(1:m_{k+1})\mP_{k+1|k}\tilde{\mH}_{k+1}(1:m_{k+1})' &\textbf{0} \\
                                            \textbf{0} & \textbf{0} \\
                                          \end{array}
                                        \right)\right.\\[3mm]
\nonumber&&\cdot\left.\left(
                         \begin{array}{cc}
                           (\mR_{k+1}(1:m_{k+1}))^{-1} & \textbf{0} \\
                           \textbf{0} & \textbf{0} \\
                         \end{array}
                       \right)\right)^{-1}\mP\\[3mm]
\nonumber&=&\mP'\left(
                         \begin{array}{cc}
                           (\mR_{k+1}(1:m_{k+1}))^{-1} & \textbf{0} \\
                           \textbf{0} & \textbf{0} \\
                         \end{array}
                       \right)\\[3mm]
\nonumber&&\cdot\left(\mP' +\mP' \left(
                                          \begin{array}{cc}
                                            \tilde{\mH}_{k+1}(1:m_{k+1})\mP_{k+1|k}\tilde{\mH}_{k+1}(1:m_{k+1})' &\textbf{0} \\
                                            \textbf{0} & \textbf{0} \\
                                          \end{array}
                                        \right)\right.\\[3mm]
\nonumber&&\cdot\left.\left(
                         \begin{array}{cc}
                           (\mR_{k+1}(1:m_{k+1}))^{-1} & \textbf{0} \\
                           \textbf{0} & \textbf{0} \\
                         \end{array}
                       \right)\right)^{-1}\\[3mm]
%\end{eqnarray}
%\begin{eqnarray}
\nonumber&=&\mP'\left(
                         \begin{array}{cc}
                           (\mR_{k+1}(1:m_{k+1}))^{-1} & \textbf{0} \\
                           \textbf{0} & \textbf{0} \\
                         \end{array}
                       \right)\\
\nonumber&&\left(\mP'+\mP'\left(
                                          \begin{array}{cc}
                                            \tilde{\mH}_{k+1}(1:m_{k+1})\mP_{k+1|k}\tilde{\mH}_{k+1}(1:m_{k+1})' &\textbf{0} \\
                                            \textbf{0} & \textbf{0} \\
                                          \end{array}
                                        \right)
                       \mP\tilde{\mR}_{k+1}^{+}\mP'\right)^{-1}\\[3mm]
\nonumber&=&\mP'\left(
                         \begin{array}{cc}
                           (\mR_{k+1}(1:m_{k+1}))^{-1} & \textbf{0} \\
                           \textbf{0} & \textbf{0} \\
                         \end{array}
                       \right)(\mP'+\mP'\mP\tilde{\mH}_{k+1}\mP_{k+1|k}\tilde{\mH}_{k+1}'\mP'\mP\tilde{\mR}_{k+1}^{+}\mP')^{-1}\\[3mm]
\nonumber&=&\mP'\left(
                         \begin{array}{cc}
                           (\mR_{k+1}(1:m_{k+1}))^{-1} & \textbf{0} \\
                           \textbf{0} & \textbf{0} \\
                         \end{array}
                       \right)(\mP'+\tilde{\mH}_{k+1}\mP_{k+1|k}\tilde{\mH}_{k+1}'\tilde{\mR}_{k+1}^{+}\mP')^{-1}\\[3mm]
\nonumber&=&\mP'\left(
                         \begin{array}{cc}
                           (\mR_{k+1}(1:m_{k+1}))^{-1} & \textbf{0} \\
                           \textbf{0} & \textbf{0} \\
                         \end{array}
                       \right)\mP(\mI+\tilde{\mH}_{k+1}\mP_{k+1|k}\tilde{\mH}_{k+1}'\tilde{\mR}_{k+1}^{+})^{-1}\\[3mm]
\label{Eqsm_A_4}&=&\tilde{\mR}_{k+1}^{+}(\mI+\tilde{\mH}_{k+1}\mP_{k+1|k}\tilde{\mH}_{k+1}'\tilde{\mR}_{k+1}^{+})^{-1}.
\end{eqnarray}

From Equations (\ref{Eqsm_13}) and (\ref{Eqsm_A_4}), we have
\begin{eqnarray}
\nonumber\mK_{k+1}&=&\mP_{k+1|k}\mH_{k+1}'\left(
\tilde{\mR}_{k+1}+\tilde{\mH}_{k+1}\mP_{k+1|k}\tilde{\mH}_{k+1}'
                     \right)^{+}\\
\nonumber&=&\mP_{k+1|k}\mH_{k+1}'\tilde{\mR}_{k+1}^{+}\left(
\mI+\tilde{\mH}_{k+1}\mP_{k+1|k}\tilde{\mH}_{k+1}'\tilde{\mR}_{k+1}^{+}
                     \right)^{-1},
\end{eqnarray}
which yields
\begin{eqnarray}
\nonumber\mK_{k+1}\left(
\mI+\tilde{\mH}_{k+1}\mP_{k+1|k}\tilde{\mH}_{k+1}'\tilde{\mR}_{k+1}^{+}
                     \right)
\nonumber&=&\mP_{k+1|k}\mH_{k+1}'\tilde{\mR}_{k+1}^{+},
\end{eqnarray}
so that
\begin{eqnarray}
\nonumber\mK_{k+1}
\mI+\mK_{k+1}\tilde{\mH}_{k+1}\mP_{k+1|k}\tilde{\mH}_{k+1}'\tilde{\mR}_{k+1}^{+}
\nonumber&=&\mP_{k+1|k}\mH_{k+1}'\tilde{\mR}_{k+1}^{+}.
\end{eqnarray}
Moreover, we have
\begin{eqnarray}
\nonumber\mK_{k+1}
&=&(\mI-\mK_{k+1}\tilde{\mH}_{k+1})\mP_{k+1|k}\mH_{k+1}'\tilde{\mR}_{k+1}^{+}\\
\label{Eqsm_A_5}&=&\mP_{k+1|k+1}\mH_{k+1}'\tilde{\mR}_{k+1}^{+},
\end{eqnarray}
where the derivation of (\ref{Eqsm_A_5}) is based on
(\ref{Eqsm_11}).

From Equations (\ref{Eqsm_11}) and (\ref{Eqsm_A_5}), we have
\begin{eqnarray}
\nonumber\mP_{k+1|k}^{-1}&=&\mP_{k+1|k+1}^{-1}(\mI-\mK_{k+1}\tilde{\mH}_{k+1}),\\
\nonumber&=&\mP_{k+1|k+1}^{-1} -\mP_{k+1|k+1}^{-1} \mK_{k+1}\tilde{\mH}_{k+1},\\
\nonumber&=&\mP_{k+1|k+1}^{-1}
-\mH_{k+1}'\tilde{\mR}_{k+1}^{+}\tilde{\mH}_{k+1}.
\end{eqnarray}
Thus, we have
$\mP_{k+1|k+1}^{-1}=\mP_{k+1|k}^{-1}+\mH_{k+1}'\tilde{\mR}_{k+1}^{+}\tilde{\mH}_{k+1}$.

By (\ref{Eqsm_10}), (\ref{Eqsm_11}) and (\ref{Eqsm_A_5}),
\begin{eqnarray}%
\nonumber\vx_{k+1|k+1}&=&(\mI-\mK_{k+1}\tilde{\mH}_{k+1})\vx_{k+1|k}+\mK_{k+1}\vz_{k+1},\\
\nonumber&=&(\mI-\mK_{k+1}\tilde{\mH}_{k+1})\vx_{k+1|k}+\mP_{k+1|k+1}\mH_{k+1}'\tilde{\mR}_{k+1}^{+}\vz_{k+1},\\
\nonumber&=&\mP_{k+1|k+1}\mP_{k+1|k}^{-1}\vx_{k+1|k}+\mP_{k+1|k+1}\mH_{k+1}'\tilde{\mR}_{k+1}^{+}\vz_{k+1},\\
\nonumber&=&\mP_{k+1|k+1}\{\mP_{k+1|k}^{-1}\vx_{k+1|k}+\mH_{k+1}'\tilde{\mR}_{k+1}^{+}\vz_{k+1}\},
\end{eqnarray}
Thus, we have $\vx_{k+1|k+1}=\mP_{k+1|k+1}
\{\mP_{k+1|k}^{-1}\vx_{k+1|k}+
\tilde{\mH}_{k+1}'\tilde{\mR}_{k+1}^{+}\vz_{k+1}\}$.
 \Box
\end{proof}

\textbf{The proof of Lemma \ref{lem_1}.}
\begin{proof}
First, we prove that the optimal solution of ($A_1$) is also the
optimal solution of ($A_2$). If $\vx_1$ is the optimal solution of
($A_1$), then, for arbitrary $\vx\in \mathcal {S}$,
$\mM(\vx)\preceq\mM(\vx_1)$ which yields
$\trace(\mM(\vx))\leq\trace(\mM(\vx_1))$. Thus, $\vx_1$ is also the
optimal solution of ($A_2$).

On the other hand, if $\vx_2$ is the optimal solution of ($A_2$),
then, for arbitrary $\vx\in \mathcal {S}$,
$\trace(\mM(\vx))\leq\trace(\mM(\vx_2))$  which implies
$\trace(\mM(\vx_1))\leq\trace(\mM(\vx_2))$. Notice that the problem
($A_1$) has an optimal solution $\vx_1$ which yields
$\trace(\mM(\vx_2))\leq\trace(\mM(\vx_1))$. Thus,
$\trace(\mM(\vx_1))=\trace(\mM(\vx_2))$ so that
$\trace(\mM(\vx_1)-\mM(\vx_2))=0$. By
$\trace(\mM(\vx_1)-\mM(\vx_2))=0$ and
$\mM(\vx_1)-\mM(\vx_2)\succeq0$, we have $\mM(\vx_1)=\mM(\vx_2)$.
Therefore, $\vx_2$ is also the optimal solution of ($A_1$).\Box
\end{proof}

\textbf{The proof of Lemma \ref{lem_2}.}
\begin{proof}
In one direction: if $\vx_1^*,\ldots,\vx_N^*$ is the optimal
solution of (($B_1$) with $n=N$), then, for $n=1,\ldots,N$,
\begin{eqnarray}%
\nonumber \mM_n(\vx_1^*,\ldots,\vx_n^*)\preceq
\mM_n(\vx_1,\ldots,\vx_n) ~\mbox{for arbitrary}~ \vx_i\in\mathcal
{S}_i, i=1,\ldots,n,
\end{eqnarray}
since the optimal solution that minimizes
$\mM_n(\vx_1,\ldots,\vx_n)$ is the same as that minimizes
$\mM_{n+1}(\vx_1,\ldots,\vx_{n+1})$, for $n=1,\ldots,N$. Thus,
\begin{eqnarray}%
\nonumber\sum_{n=1}^N\mM_n(\vx_1^*,\ldots,\vx_n^*)\preceq
\sum_{n=1}^N\mM_n(\vx_1,\ldots,\vx_n) ~\mbox{for arbitrary}~
\vx_i\in\mathcal {S}_i, i=1,\ldots,n,
\end{eqnarray}
which yields $\vx_1^*,\ldots,\vx_N^*$ is also the optimal solution
of ($B_2$).

On the other hand,  assume that $\vx_1^*,\ldots,\vx_N^*$ is the
optimal solution of ($B_2$). If $\vx_1^*,\ldots,\vx_N^*$ is not the
optimal solution of ($B_1$), then there exists an optimal solution
$\vx_1^0,\ldots,\vx_N^0$ which has a smaller objective function
value than that of $\vx_1^*,\ldots,\vx_N^*$. Since the optimal
solution that minimizes $\mM_n(\vx_1,\ldots,\vx_n)$ is the same as
that minimizes $\mM_{n+1}(\vx_1,\ldots,\vx_{n+1})$,
$\vx_1^0,\ldots,\vx_n^0$ is the optimal solution of
$\mM_{n}(\vx_1,\ldots,\vx_{n})$ for $n=1,\ldots,N$. Thus,
\begin{eqnarray}%
\nonumber \mM_n(\vx_1^*,\ldots,\vx_n^*)\succeq
\mM_n(\vx_1^0,\ldots,\vx_n^0), n=1,\ldots,N,
\end{eqnarray}
so that
\begin{eqnarray}%
\nonumber\sum_{n=1}^N\mM_n(\vx_1^*,\ldots,\vx_n^*)\succeq
\sum_{n=1}^N\mM_n(\vx_1^0,\ldots,\vx_n^0),
\end{eqnarray}
which yields a contradiction. Thus,  $\vx_1^*,\ldots,\vx_N^*$ is the
optimal solution of ($B_1$).\Box
\end{proof}

\textbf{The proof of Lemma \ref{lem_3}.}
\begin{proof}
If the problem (\ref{Eqsm_19}) has an optimal solution, from the
fact that any positive definite matrix $A\succeq A_0$ implies
$A^{-1}\preceq A_0^{-1}$, then we have that the problem
(\ref{Eqsm_19}) is equivalent to solve
\begin{eqnarray}%
\nonumber \max_{\gamma_{k+n}^i} ~&&(\mP_{k+N|k+N}(\gamma_{k+1}^1,\ldots,\gamma_{k+1}^L,\ldots,\gamma_{k+N}^1,\ldots,\gamma_{k+N}^L))^{-1}\\
\nonumber&&=\mP_{k+N|k+N-1}^{-1}+\tilde{\mH}_{k+N}'\tilde{\mR}_{k+N}^{+}\tilde{\mH}_{k+N}\\
\nonumber\mbox{subject to}~
~ &&\sum_{i=1}^L\gamma_{k+n}^i=m_{k+n},~ n=1,\ldots,N,\\
\nonumber&&\gamma_{k+n}^i\in \{0,1\}, i=1,2,\ldots,L,~ n=1,\ldots,N,
\end{eqnarray}
which has the same optimal solution.  Since the constraints are
temporally separable, and by Equation (\ref{Eqsm_14}) and the
invertibility of $\mF_k$, it is equivalent to solve the following
two problems
\begin{eqnarray}%
\nonumber \max_{\gamma_{k+n}^i} ~&& \tilde{\mH}_{k+N}'\tilde{\mR}_{k+N}^{+}\tilde{\mH}_{k+N}\\
\mbox{subject to}~
\nonumber &&\sum_{i=1}^L\gamma_{k+n}^i=m_{k+n},~ n= N,\\
\nonumber&&\gamma_{k+n}^i\in \{0,1\}, i=1,2,\ldots,L,~ n= N,
\end{eqnarray}
and
\begin{eqnarray}%
\nonumber \min_{\gamma_{k+n}^i} ~&&\mP_{k+N-1|k+N-1}(\gamma_{k+1}^1,\ldots,\gamma_{k+1}^L,\ldots,\gamma_{k+N-1}^1,\ldots,\gamma_{k+N-1}^L)\\
\nonumber&&=(\mP_{k+N-1|k+N-2}^{-1}+\tilde{\mH}_{k+N-1}'\tilde{\mR}_{k+N-1}^{+}\tilde{\mH}_{k+N-1})^{-1}\\
\nonumber\mbox{subject to}~
~ &&\sum_{i=1}^L\gamma_{k+n}^i=m_{k+n},~ n=1,\ldots, N,-1,\\
\nonumber&&\gamma_{k+n}^i\in \{0,1\}, i=1,2,\ldots,L,~ n=1,\ldots,
N,-1.
\end{eqnarray}
Both of them have an optimal solution respectively.

After $N$-step recursive decomposition, the problem (\ref{Eqsm_19})
is equivalent to solve the following $N$ optimization problems
\begin{eqnarray}%
\nonumber\max_{\gamma_{k+n}^i} ~&&\tilde{\mH}_{k+n}'\tilde{\mR}_{k+n}^{+}\tilde{\mH}_{k+n}~~~~~ \mbox{for}~ n=1,\ldots, N,\\
%&&=(\mP_{k+N|k+N-1}^{-1}+\tilde{\mH}_{k+N}'\tilde{\mR}_{k+N}^{+}\tilde{\mH}_{k+N})^{-1}\\
\nonumber\mbox{subject to}~
~ &&\sum_{i=1}^L\gamma_{k+n}^i=m_{k+n},\\
\nonumber&&\gamma_{k+n}^i\in \{0,1\}, i=1,2,\ldots,L.
\end{eqnarray}
All of $N$ optimization problems have an optimal solution
respectively.

% Thus,  by Lemma \ref{lem_2}, the problem
%(\ref{Eqsm_19}) is equivalent to the problem (\ref{Eqsm_2_7}).

Moreover, for the problem (\ref{Eqsm_19}), we consider minimizing
$\mP_{k+n|k+n}$ and $\mP_{k+n+1|k+n+1}$ respectively. Both of them
have a recursive decomposition similar to that of minimizing
$\mP_{k+N|k+N}$ for the problem (\ref{Eqsm_19}). Thus, we have
\begin{eqnarray}%
 \min_{\gamma_{k+s}^i} ~&&\mP_{k+n|k+n}(\gamma_{k+1}^1,\ldots,\gamma_{k+1}^L,\ldots,\gamma_{k+n}^1,\ldots,\gamma_{k+n}^L)\\
%&&=(\mP_{k+n|k+n-1}^{-1}+\tilde{\mH}_{k+n}'\tilde{\mR}_{k+n}^{+}\tilde{\mH}_{k+n})^{-1}\\
\nonumber\mbox{subject to}~
~ &&\sum_{i=1}^L\gamma_{k+s}^i=m_{k+s},~ s=1,\ldots,n,\\
\nonumber&&\gamma_{k+s}^i\in \{0,1\}, i=1,2,\ldots,L,~ s=1,\ldots,n,
\end{eqnarray}
and
\begin{eqnarray}%
 \min_{\gamma_{k+s}^i} ~&&\mP_{k+n+1|k+n+1}(\gamma_{k+1}^1,\ldots,\gamma_{k+1}^L,\ldots,\gamma_{k+n+1}^1,\ldots,\gamma_{k+n+1}^L)\\
%&&=(\mP_{k+n|k+n-1}^{-1}+\tilde{\mH}_{k+n}'\tilde{\mR}_{k+n}^{+}\tilde{\mH}_{k+n})^{-1}\\
\nonumber\mbox{subject to}~
~ &&\sum_{i=1}^L\gamma_{k+s}^i=m_{k+s},~ s=1,\ldots,n+1,\\
\nonumber&&\gamma_{k+s}^i\in \{0,1\}, i=1,2,\ldots,L,~
s=1,\ldots,n+1,
\end{eqnarray}
have the same optimal  solutions $(\gamma_{k+s}^i)^*, i=1,\ldots,L,
s=1,\ldots,n$ respectively. By Lemma \ref{lem_2},  the problem
(\ref{Eqsm_19}) is also equivalent to solving the problem
(\ref{Eqsm_22}). Therefore, if the primal sensor selection problem
(\ref{Eqsm_19}) has an optimal solutions, both the problem
(\ref{Eqsm_19}) and the problem (\ref{Eqsm_22}) can be equivalently
transformed to solve the problem (\ref{Eqsm_2_7}).\Box
\end{proof}

\textbf{The derivation of the problem (\ref{Eqsm_new_4_5}})

By $(\gamma_{k+n}^i)^2=\gamma_{k+n}^i$, the problem
(\ref{Eqsm_new_4_4}) is equivalent to
\begin{eqnarray}%
\nonumber\min_{\gamma_{k+n}^i} ~&&\gamma'\mB\gamma\\
\nonumber\mbox{subject to}~
~ &&\gamma'\diag(\va_p)\gamma ~\unrhd~b_p,~ p=1,\ldots,P,\\
\nonumber&&\gamma_{k+n}^i\in \{0,1\}, i=1,2,\ldots,L, n=1,\ldots,N,
\end{eqnarray}
where $\gamma$ is defined in (\ref{Eqsm_new_3_3}); $\diag(\va_p)$
and $\mB=\diag(\omega_1\mB_{k+1},\ldots, \omega_N\mB_{k+N})$ are
diagonal matrix and diagonal block matrix respectively. If we let
$\tau_{k+n}^i=2\gamma_{k+n}^i-1$  and denote by
$\tau_{k+n}\triangleq(\tau_{k+n}^1,\ldots,\tau_{k+n}^L)'$ and
$\tau\triangleq(\tau_{k+1}',\ldots,\tau_{k+N}')'$, then the problem
is equivalent to
\begin{eqnarray}%
\label{Eqsm_new_3_4_6}\min_{\tau_{k+n}^i} ~&&\frac{1}{4}(\tau+\textbf{1})'\mB(\tau+\textbf{1})\\
\nonumber\mbox{subject to}~
~ &&\frac{1}{4}(\tau+\textbf{1})'\diag(\va_p)(\tau+\textbf{1}) ~\unrhd~b_p,~ p=1,\ldots,P,\\
\nonumber&&(\tau_{k+n}^i)^2=1, i=1,2,\ldots,L, n=1,\ldots,N,
\end{eqnarray}
where  $\textbf{1}$ is a 1 vector with compatible dimensions.
Moreover, it is equivalent to
\begin{eqnarray}%
\label{Eqsm_new_3_4_7}\min_{\tau_{k+n}^i,~t} ~&& (\tau+\textbf{1})'\mB(\tau+\textbf{1})\\
\nonumber&&=(\tau_{k+n}'~~ t)\left(
                       \begin{array}{cc}
                         \mB &\mB\textbf{ 1} \\
                         \textbf{1 }'\mB& 0  \\
                       \end{array}
                     \right)\left(
                              \begin{array}{c}
                                \tau_{k+n} \\
                                t \\
                              \end{array}
                            \right)\\
\nonumber\mbox{subject to}~ ~ &&(\tau'~~ t)\left(
                       \begin{array}{cc}
                         \diag(\va_p) &\textbf{ 1} \\
                         \textbf{1 }'& 0  \\
                       \end{array}
                     \right)\left(
                              \begin{array}{c}
                                \tau \\
                                t \\
                              \end{array}
                            \right)~\unrhd~4b_p-\textbf{ 1}'\diag(\va_p)\textbf{ 1},~ p=1,\ldots,P,\\
\nonumber&&(\tau_{k+n}^i)^2=1, i=1,2,\ldots,L, n=1,\ldots,N,\\
\nonumber&&t^2=1.
\end{eqnarray}
Problem (\ref{Eqsm_new_3_4_6}) is equivalent to
(\ref{Eqsm_new_3_4_7}) in the sense: if $(\tau^*,t^*)$ is the
optimal solution to (\ref{Eqsm_new_3_4_7}), then $\tau^*$
(respectively $-\tau^*$) is an optimal solution to
(\ref{Eqsm_new_3_4_6})  when $t^*=1$ (respectively $t^*=-1$).
Moreover, the problem is equivalent to
\begin{eqnarray}%
\label{Eqsm_new_3_4_8}\min_{\tau,~ t} &&(\tau'~~ t)\mC\left(
                              \begin{array}{c}
                                \tau \\
                                t \\
                              \end{array}
                            \right)\\
\nonumber\mbox{subject to}~ ~ &&(\tau'~~ t)\mE_0\left(
                              \begin{array}{c}
                                \tau \\
                                t \\
                              \end{array}
                            \right)\unrhd~(4b_p-\textbf{ 1}'\diag(\va_p)\textbf{1}),~~~ p=1,\ldots,P,\\
\nonumber&&(\tau'~~ t)\mE_s\left(
                              \begin{array}{c}
                                \tau \\
                                t \\
                              \end{array}
                            \right)=1, ~s=1,\ldots,(NL+1),
\end{eqnarray}
where
\begin{eqnarray}
\nonumber\mC=\left(
                       \begin{array}{cc}
                         \mB &\mB\textbf{ 1} \\
                         \textbf{1 }'\mB&0 \\
                       \end{array}
                     \right),\\
\nonumber\mE_0=\left(
                       \begin{array}{cc}
                         \diag(\va_p) &\textbf{ 1} \\
                         \textbf{1 }'& 0  \\
                       \end{array}
                     \right);
\end{eqnarray}
$\mE_s$ is a matrix with $s$-row and $s$-column  $\mE_s(s,s)=1$,
others  equal 0, for $s=1,\ldots,(NL+1)$. By introducing a new
variable $\mX=(\tau'~~ t)'(\tau'~~ t)$ and removing the constraint
$rank(\mX)=1$,  the  problem (\ref{Eqsm_new_3_4_8}) can be relaxed
to the problem (\ref{Eqsm_new_4_5}).\Box

\section*{Acknowledgment}
We would like to thank Yunmin Zhu  for his helpful suggestions that
greatly improved the quality of this paper.

%\bibliographystyle{ieeetr}
%\bibliography{E:/math/bibs/fusion_book_shen_bibtex}

%\newpage
%~
%\newpage
%\begin{eqnarray}%
%(P_0)~~~~~\max_{\gamma_{k+n}^i} ~&&\tilde{\mH}_{k+n}'\tilde{\mR}_{k+n}^{+}\tilde{\mH}_{k+n}~~~~~ \mbox{for}~ n=1,\ldots, N,\\
%%&&=(\mP_{k+N|k+N-1}^{-1}+\tilde{\mH}_{k+N}'\tilde{\mR}_{k+N}^{+}\tilde{\mH}_{k+N})^{-1}\\
%\mbox{subject to}~
%~ &&\sum_{i=1}^L\gamma_{k+n}^i=m_{k+n},\\
%&&\gamma_{k+n}^i\in \{0,1\}, i=1,2,\ldots,L.
%\end{eqnarray}

\begin{figure}[h]
%\hrule %\vspace*{8cm}
\vbox to 8.5cm{\vfill \hbox to \hsize{\hfill
\scalebox{0.5}[0.5]{\includegraphics{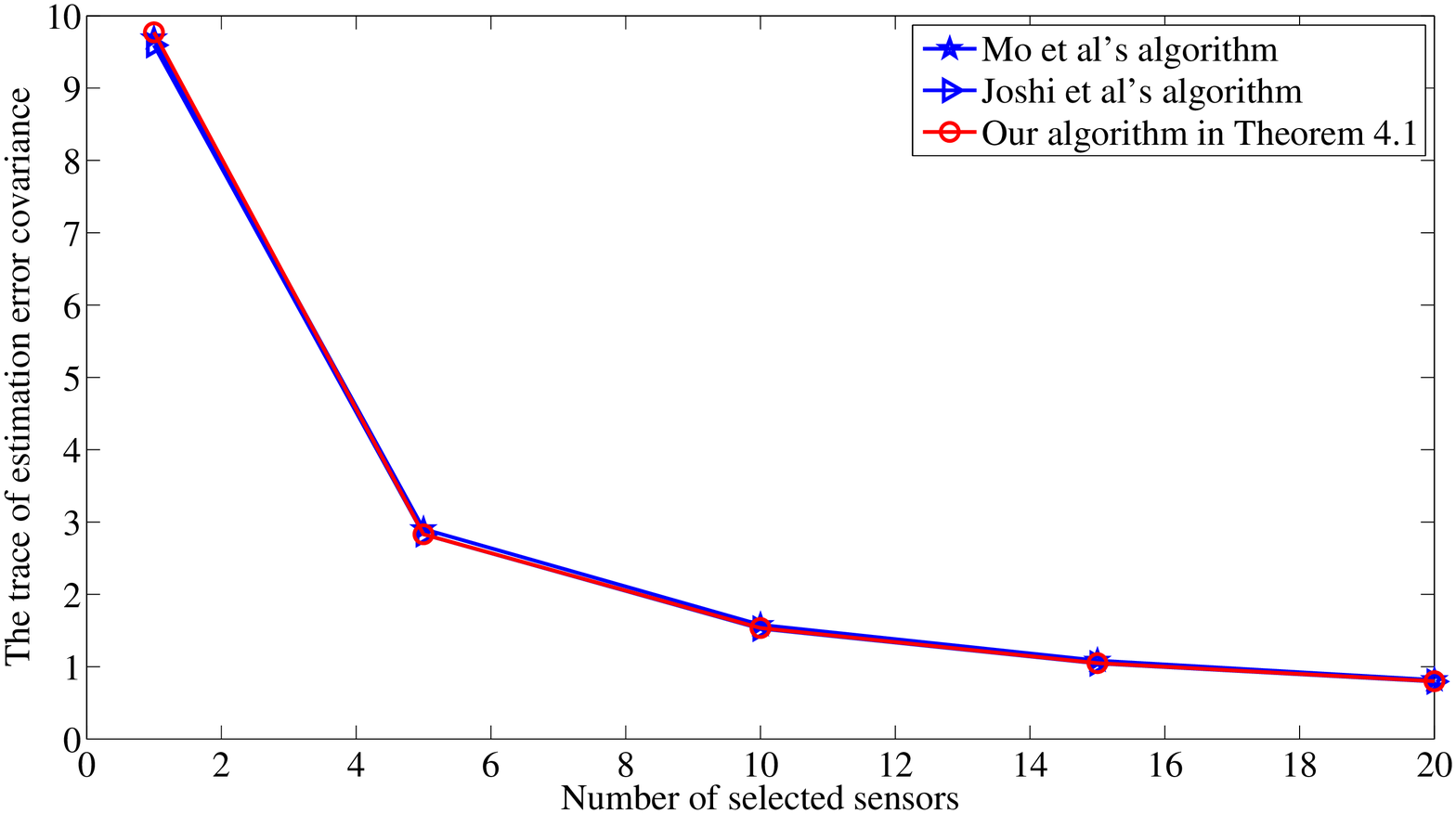}}
\hfill}\vfill}
%\hrule
\caption{The traces of the  estimation error covariance are plotted
as a function of number of selected sensors.}\label{fig_01}
\end{figure}

\begin{figure}[h]
%\hrule %\vspace*{8cm}
\vbox to 8.5cm{\vfill \hbox to \hsize{\hfill
\scalebox{0.5}[0.5]{\includegraphics{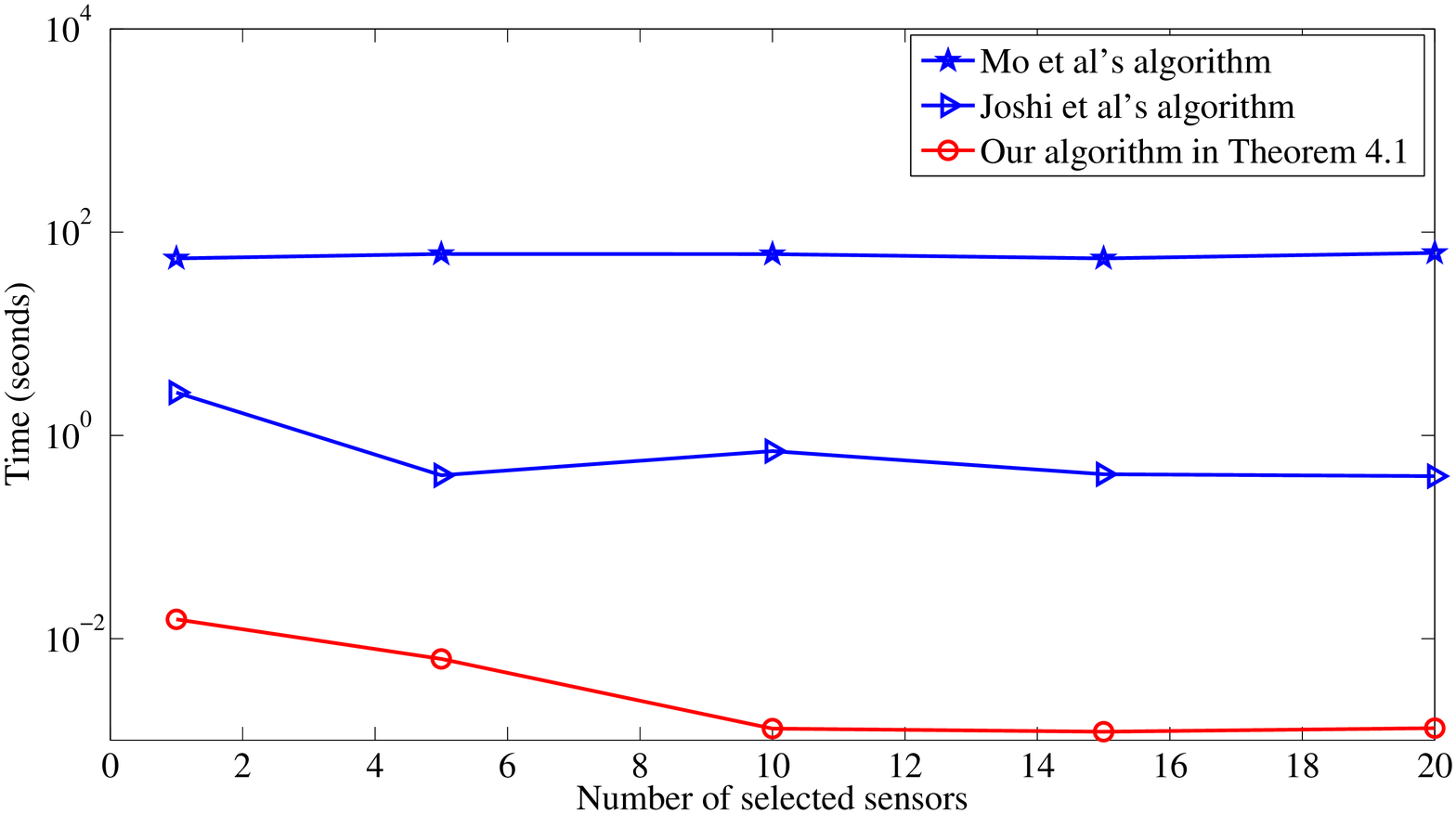}}
\hfill}\vfill}
%\hrule
\caption{The cpu times are plotted as a function of number of
selected sensors. }\label{fig_02}
\end{figure}

\begin{figure}[h]
%\hrule %\vspace*{8cm}
\vbox to 8.5cm{\vfill \hbox to \hsize{\hfill
\scalebox{0.5}[0.5]{\includegraphics{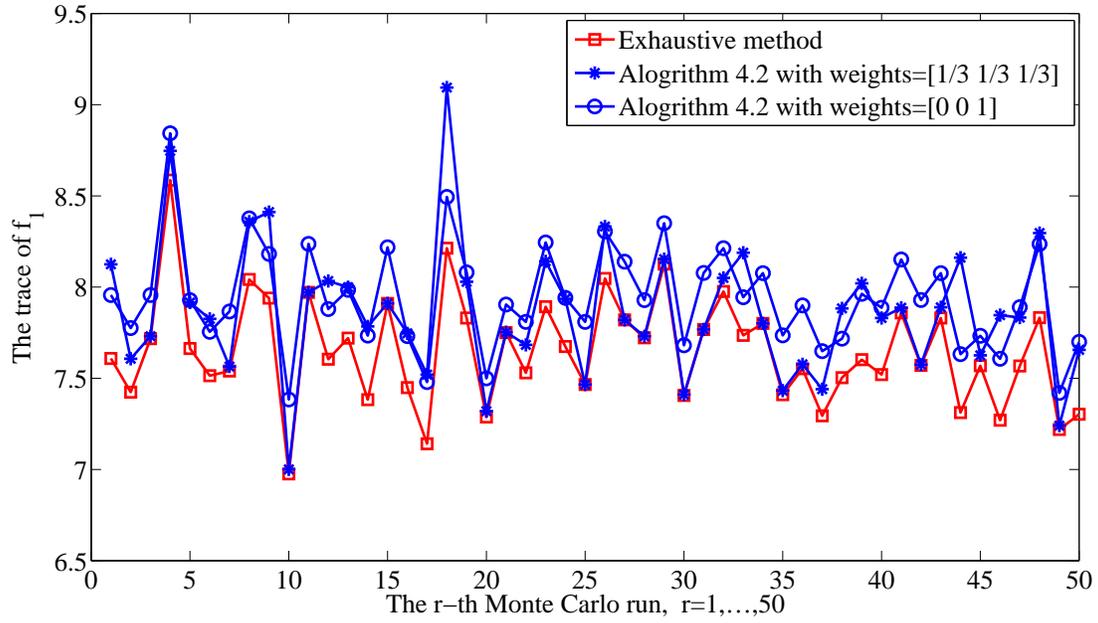}} \hfill}\vfill}
%\hrule
\caption{The traces of the final estimation error covariance for 50
Monte Carlo runs.}\label{fig_1}
\end{figure}

\begin{figure}[h]
%\hrule %\vspace*{8cm}
\vbox to 8.5cm{\vfill \hbox to \hsize{\hfill
\scalebox{0.5}[0.5]{\includegraphics{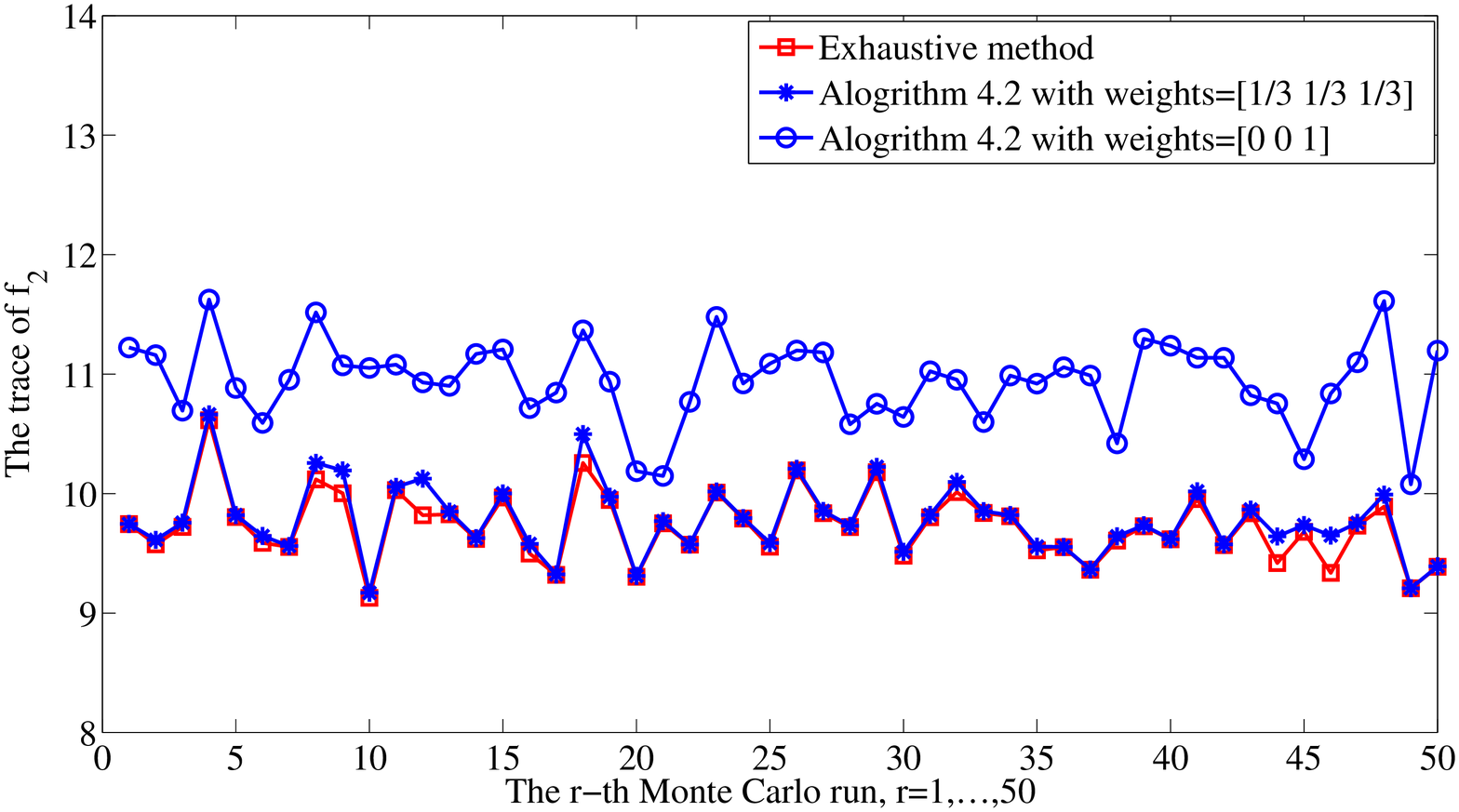}} \hfill}\vfill}
%\hrule
\caption{The traces of the average estimation error covariance in 50
Monte Carlo runs. }\label{fig_2}
\end{figure}

\begin{figure}[h]
%\hrule %\vspace*{8cm}
\vbox to 8.5cm{\vfill \hbox to \hsize{\hfill
\scalebox{0.5}[0.5]{\includegraphics{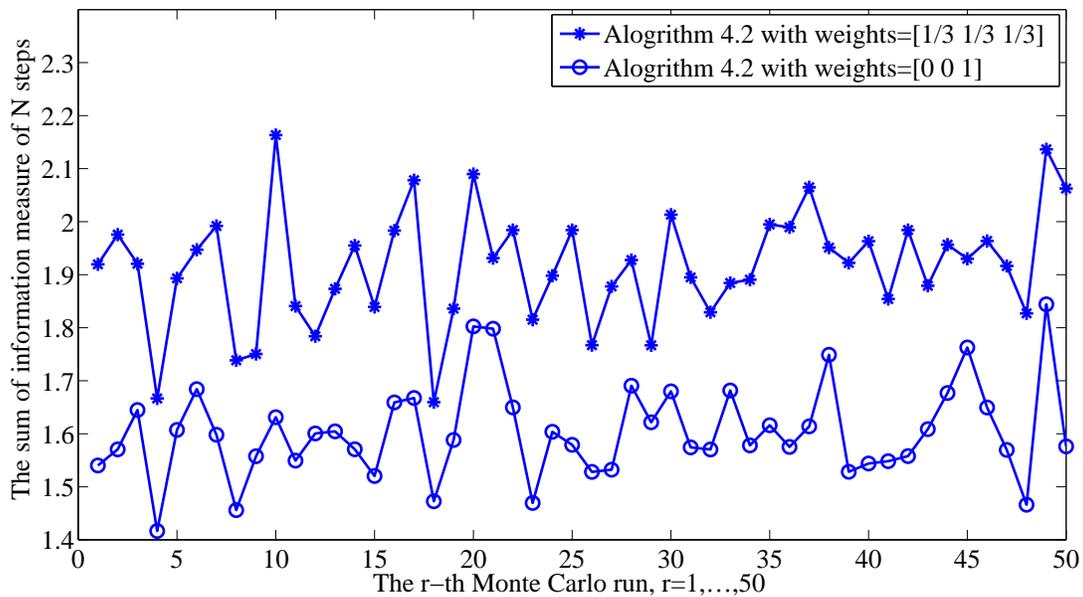}}
\hfill}\vfill}
%\hrule
\caption{The sum of information measures of $N$ time steps for 50
Monte Carlo runs}\label{fig_3}
\end{figure}

\begin{figure}[h]
%\hrule %\vspace*{8cm}
\vbox to 8.5cm{\vfill \hbox to \hsize{\hfill
\scalebox{0.5}[0.5]{\includegraphics{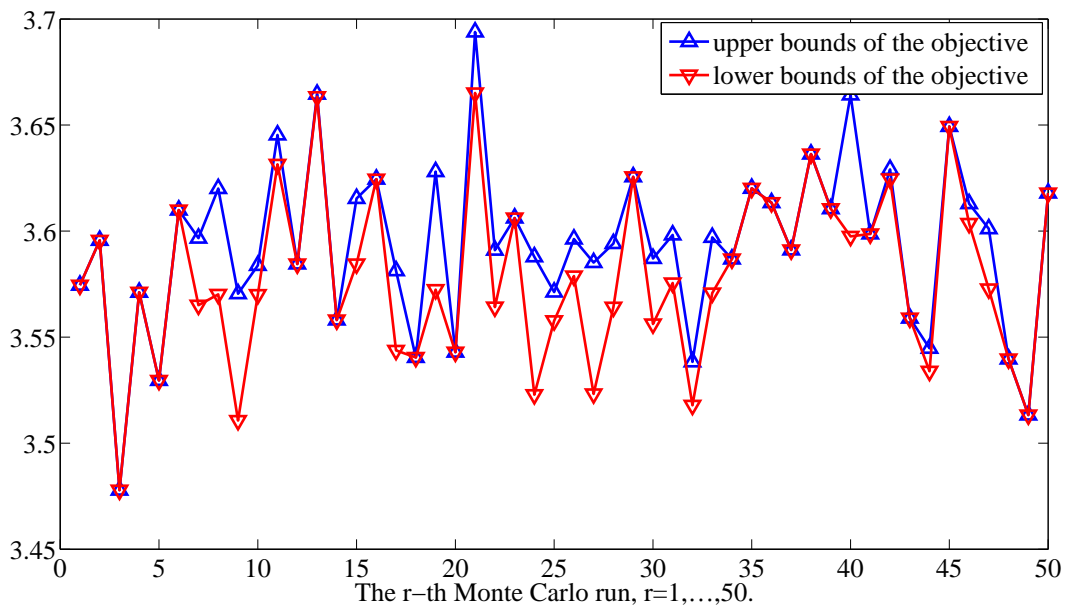}}
\hfill}\vfill}
%\hrule
\caption{The upper bound and lower bound of the objective function
of the optimization problem (\ref{Eqsm_new_3_7}) for 50 Monte Carlo
runs. }\label{fig_4}
\end{figure}

\begin{figure}[h]
%\hrule %\vspace*{8cm}
\vbox to 8.5cm{\vfill \hbox to \hsize{\hfill
\scalebox{0.5}[0.5]{\includegraphics{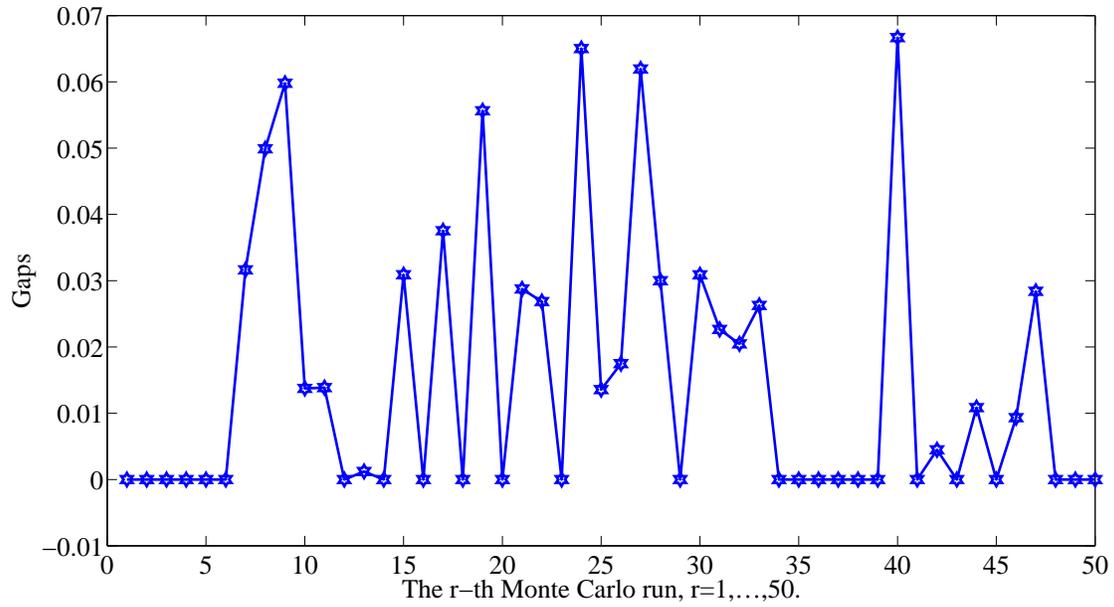}} \hfill}\vfill}
%\hrule
\caption{The gaps, i.e. the upper bounds minus the lower bounds
shown in  Figure \ref{fig_4}. }\label{fig_5}
\end{figure}

\begin{figure}[h]
%\hrule %\vspace*{8cm}
\vbox to 8.5cm{\vfill \hbox to \hsize{\hfill
\scalebox{0.5}[0.5]{\includegraphics{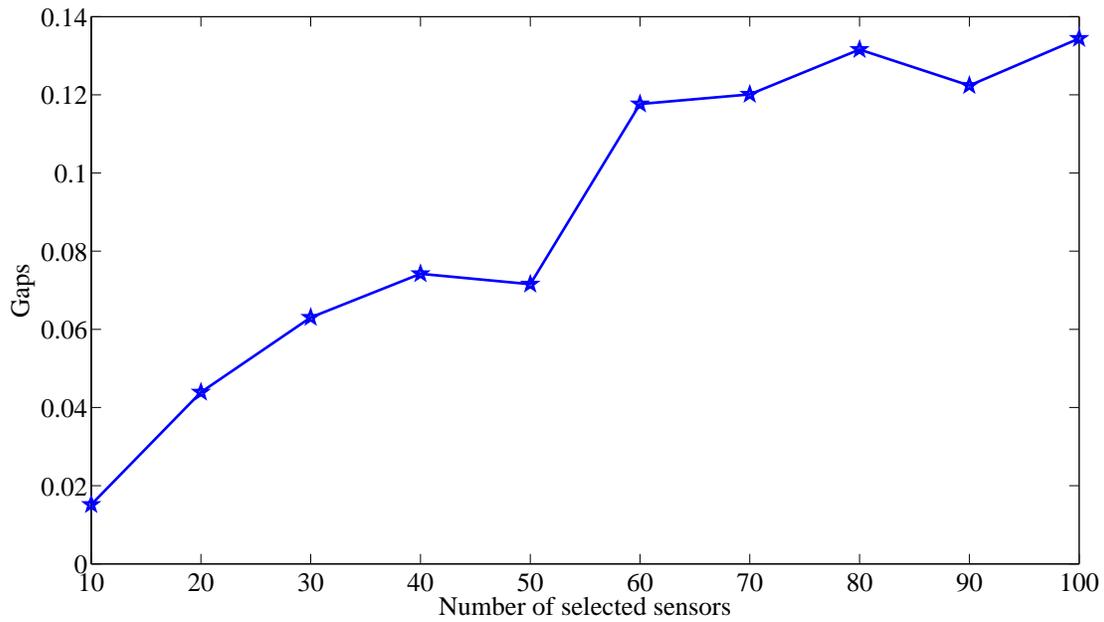}}
\hfill}\vfill}
%\hrule
\caption{The average gaps based on 50 Monte Carlo runs are plotted
as a function of number of selected sensors. }\label{fig_05}
\end{figure}

\begin{figure}[h]
%\hrule %\vspace*{8cm}
\vbox to 8.5cm{\vfill \hbox to \hsize{\hfill
\scalebox{0.5}[0.5]{\includegraphics{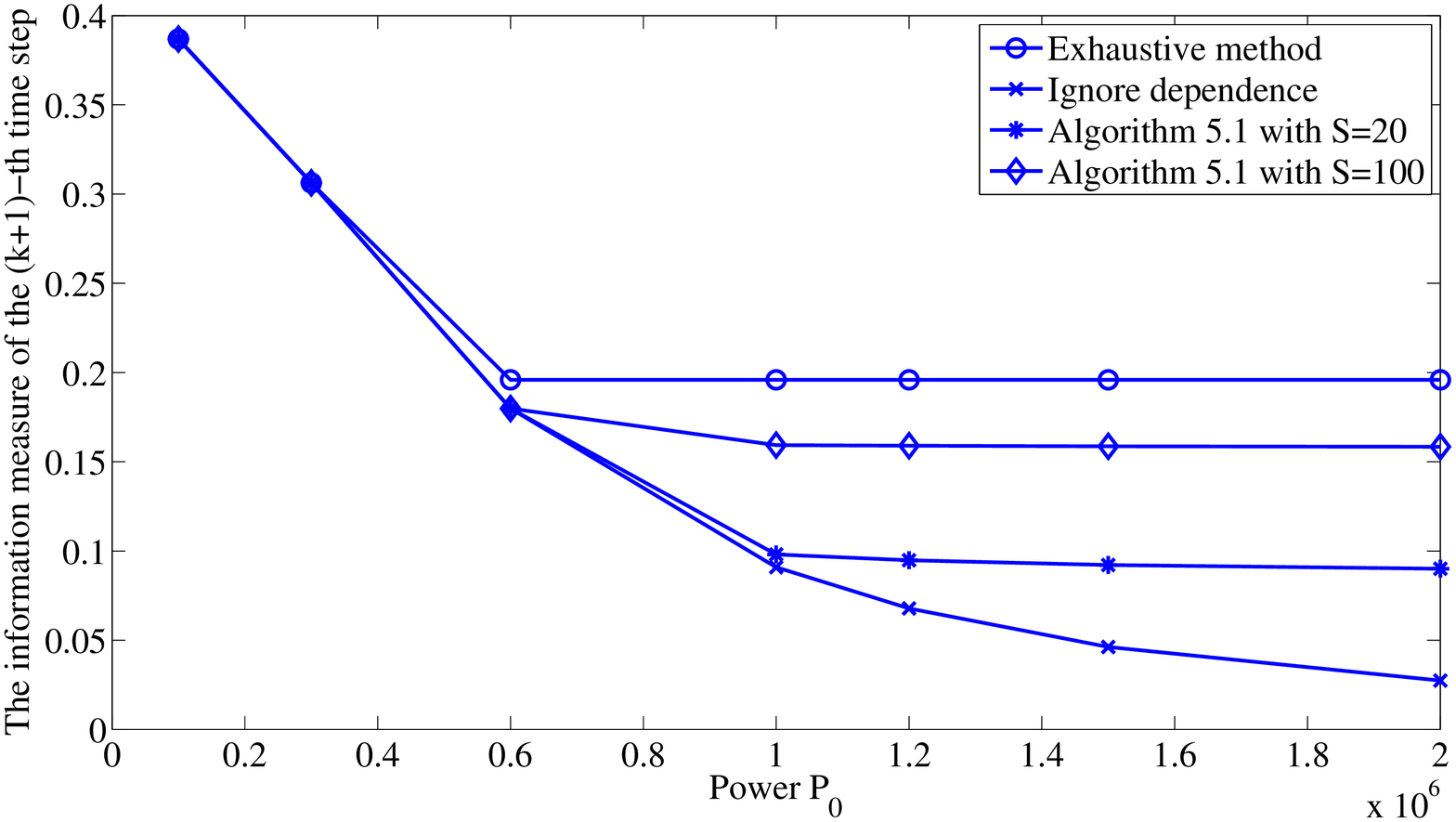}} \hfill}\vfill}
%\hrule
\caption{Information measure  based on the selected sensors at the
$(k+1)$-th time step from weak to strong signal power of the jammer
(from weak to strong correlation between sensors). }\label{fig_6}
\end{figure}

\begin{figure}[h]
%\hrule %\vspace*{8cm}
\vbox to 8.5cm{\vfill \hbox to \hsize{\hfill
\scalebox{0.5}[0.5]{\includegraphics{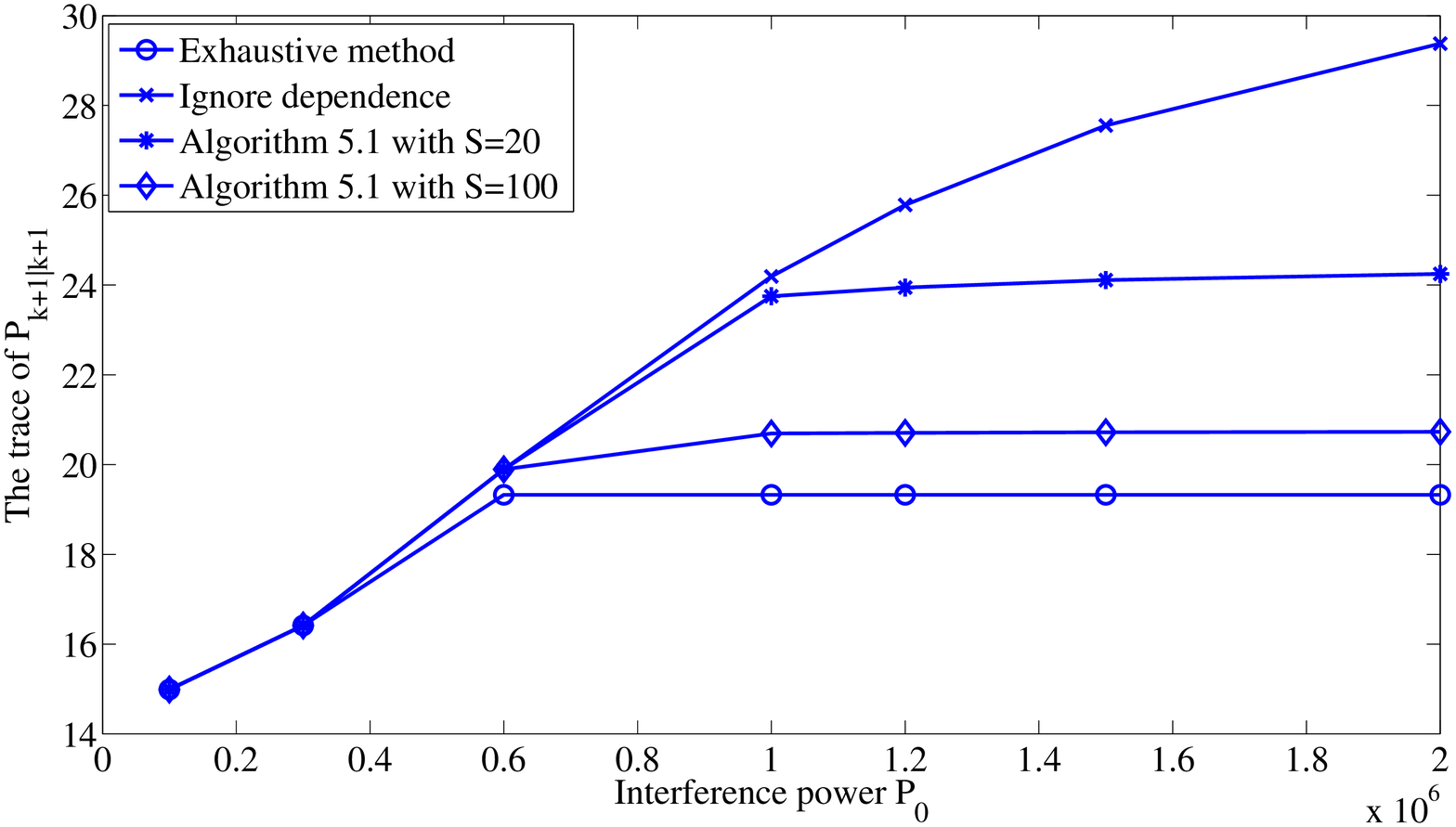}} \hfill}\vfill}
%\hrule
\caption{The trace of estimation error covariance at the $(k+1)$-th
time step from weak  to strong signal power of the jammer (from weak
to strong correlation between sensors). }\label{fig_7}
\end{figure}

\begin{figure}[h]
%\hrule %\vspace*{8cm}
\vbox to 8.5cm{\vfill \hbox to \hsize{\hfill
\scalebox{0.5}[0.5]{\includegraphics{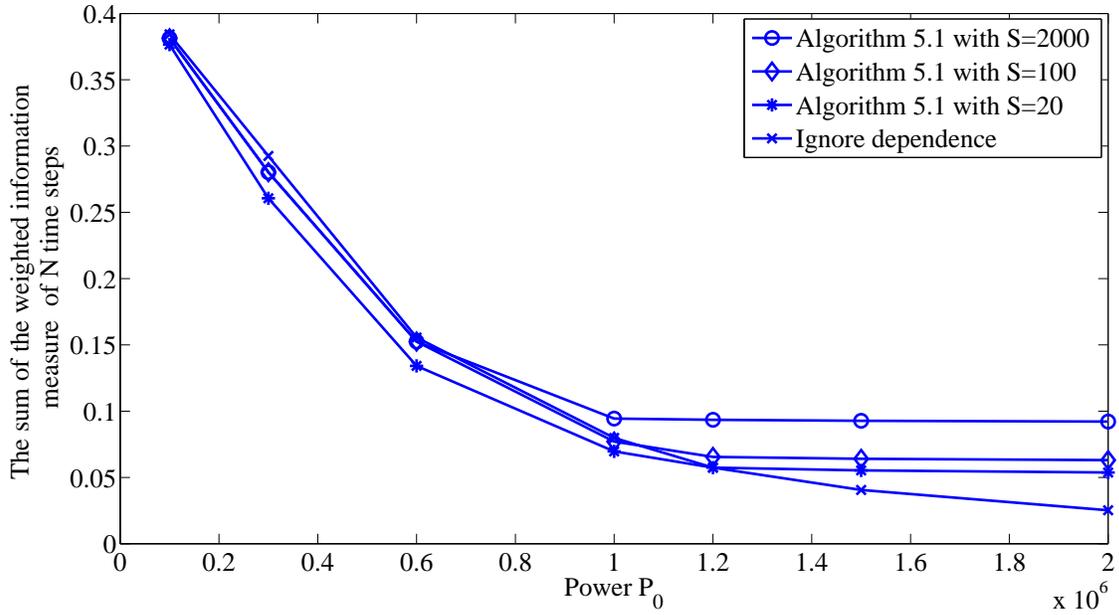}}
\hfill}\vfill}
%\hrule
\caption{The sum of the weighted information measures of $N$ time
steps from weak  to strong signal power of the jammer (from weak to
strong correlation between sensors). }\label{fig_8}
\end{figure}

\begin{figure}[h]
%\hrule %\vspace*{8cm}
\vbox to 8.5cm{\vfill \hbox to \hsize{\hfill
\scalebox{0.5}[0.5]{\includegraphics{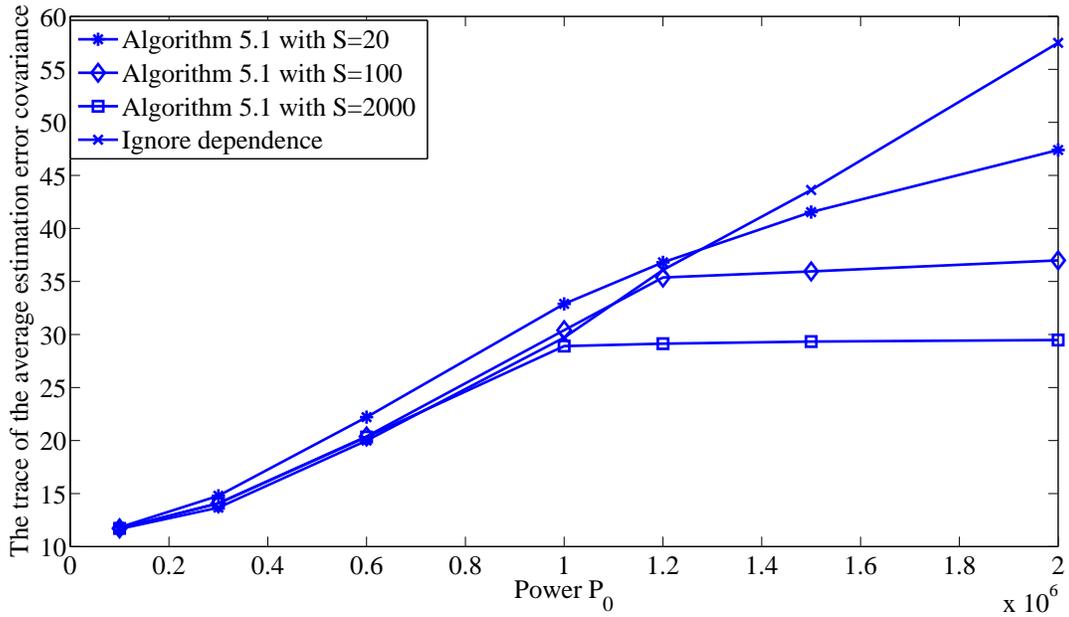}}
\hfill}\vfill}
%\hrule
\caption{The trace of the average estimation error covariance of $N$
time steps from weak  to strong signal power of the jammer (from
weak to strong correlation between sensors).  }\label{fig_9}
\end{figure}

\begin{figure}[h]
%\hrule %\vspace*{8cm}
\vbox to 8.5cm{\vfill \hbox to \hsize{\hfill
\scalebox{0.5}[0.5]{\includegraphics{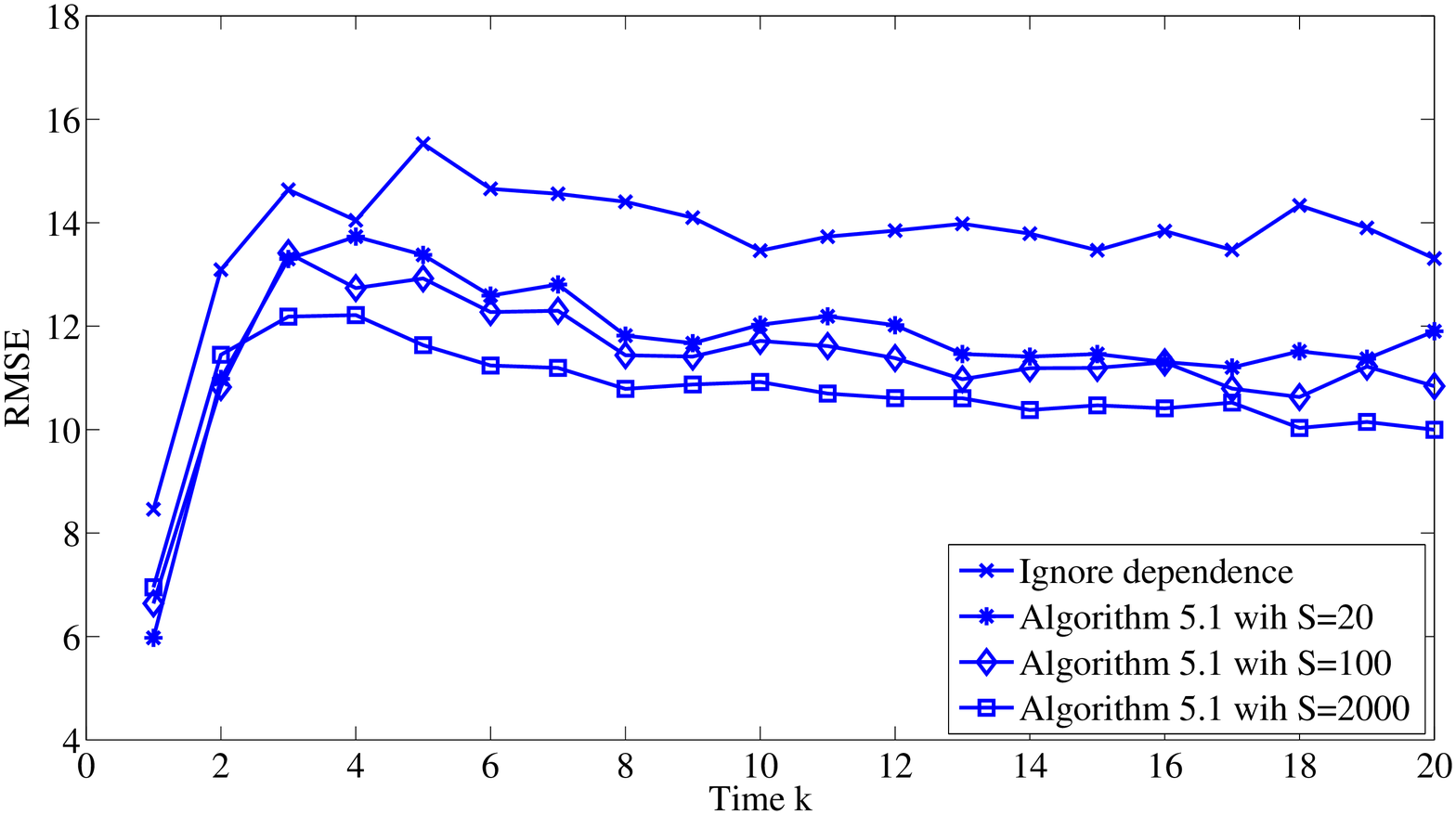}} \hfill}\vfill}
%\hrule
\caption{RMSE of the state estimates based on 200 Monte Carlo runs.
}\label{fig_10}
\end{figure}

\begin{figure}[h]
%\hrule %\vspace*{8cm}
\vbox to 8.5cm{\vfill \hbox to \hsize{\hfill
\scalebox{0.5}[0.5]{\includegraphics{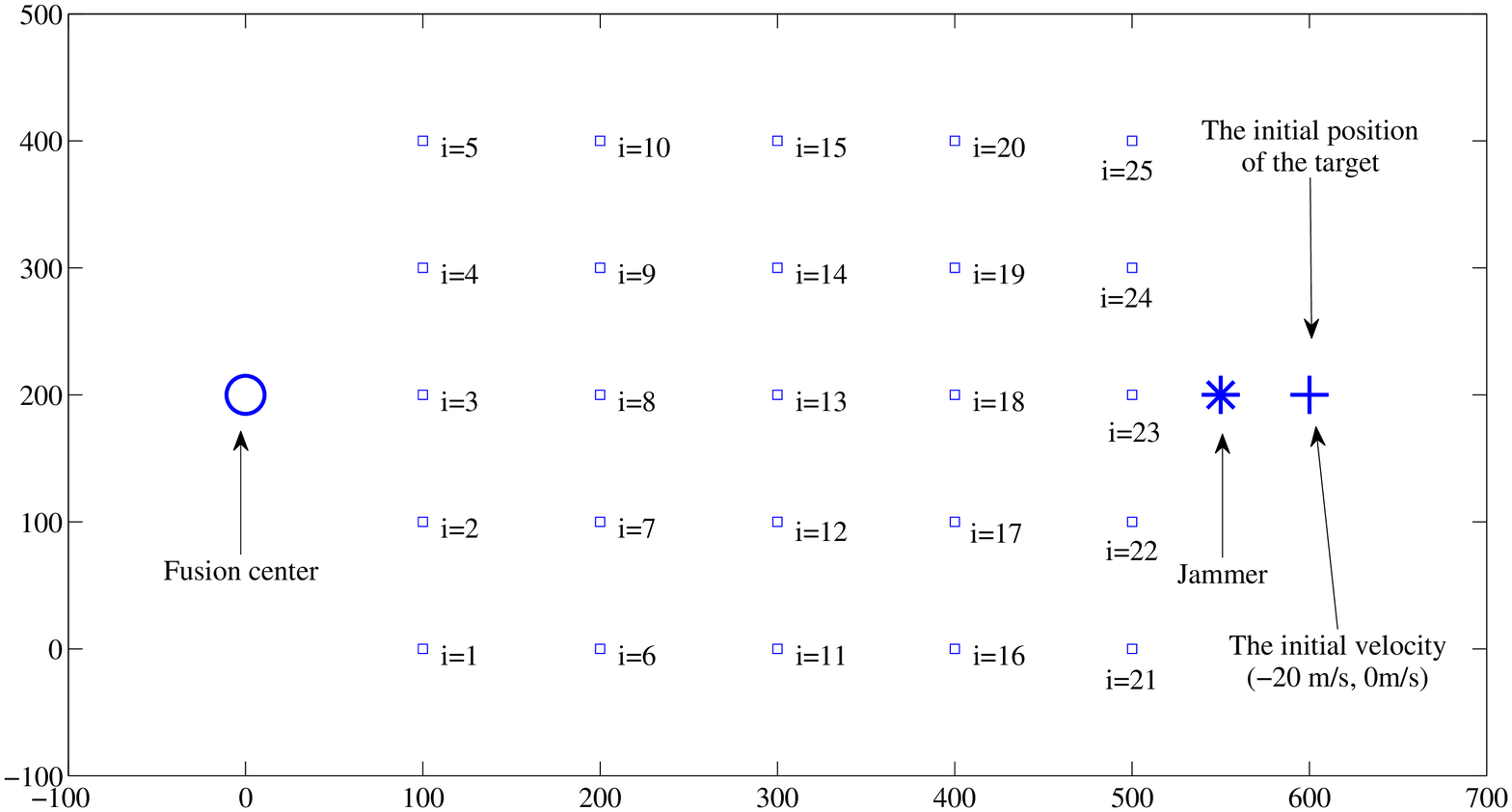}}
\hfill}\vfill}
%\hrule
\caption{The sensor network with a jammer}\label{fig_0}
\end{figure}

\end{document}